\numberwithin{equation}{section}
\newtheorem{Theorem}{Theorem}[section]
\newtheorem{Lemma}[Theorem]{Lemma}
\newcommand{\cC}{\mathcal{C}}
\renewcommand{\O}{\mathcal{O}}
\newcommand{\R}{\mathbb{R}}
\newcommand{\cR}{\mathcal{R}}
\renewcommand{\S}{\mathbb{S}}
\newcommand{\cS}{\mathcal{S}}
\newcommand{\del}{\partial}
\renewcommand{\div}{\operatorname{div}}
\renewcommand{\phi}{\varphi}
\newcommand{\grad}{\nabla}
\renewcommand{\epsilon}{\varepsilon}
\newcommand{\Lip}{\mathrm{Lip}}
\renewcommand{\[}{\begin{equation}}
\renewcommand{\]}{\end{equation}}
\begin{document}

\title{Simons' cone and equivariant maximization of the first $p$-Laplace eigenvalue}
\author{Sinan Ariturk}
\address{Pontif\'icia Universidade Cat\'olica do Rio de Janeiro}
\email{ariturk@mat.puc-rio.br}

\begin{abstract}
We consider an optimization problem for the first Dirichlet eigenvalue of the $p$-Laplacian on a hypersurface in $\R^{2n}$, with $n \ge 2$.
If $p \ge 2n-1$, then among hypersurfaces in $\R^{2n}$ which are $O(n) \times O(n)$-invariant and have one fixed boundary component, there is a surface which maximizes the first Dirichlet eigenvalue of the $p$-Laplacian.
This surface is either Simons' cone or a $C^1$ hypersurface, depending on $p$ and $n$.
If $n$ is fixed and $p$ is large, then the maximizing surface is not Simons' cone.
If $p=2$ and $n \le 5$, then Simons' cone does not maximize the first eigenvalue.
\end{abstract}

\maketitle

\section{Introduction}

In this article we consider an optimization problem for the first Dirichlet eigenvalue of the $p$-Laplacian.
This problem is motivated by Simons' cone and by the Faber-Krahn inequality.
Simons' cone was the first example of a singular area minimizing cone.
Almgren \cite{Al} showed that the only area minimizing hypercones in $\R^4$ are hyperplanes.
Simons \cite{JSi} extended this to higher dimensions up to $\R^7$ and established the existence of a singular stable minimal hypercone in $\R^8$, given by
\[
	\bigg\{ (x_1, \ldots , x_4, y_1, \ldots, y_4) \in \R^8 : x_1^2 + \ldots + x_4^2 = y_1^2 + \ldots + y_4^2 \le 1 \bigg\}
\]
Bombieri, De Giorgi, and Giusti \cite{BDGG} showed that Simons' cone is area minimizing.
That is, Simons' cone has less volume than any other hypersurface in $\R^8$ with the same boundary.
Lawson \cite{La} and Simoes \cite{PSi} gave more examples of area minimizing hypercones.

The Faber-Krahn inequality states that among domains in $\R^n$ with fixed volume, the ball minimizes the first Dirichlet eigenvalue of the $p$-Laplacian for every $1<p<\infty$.
The $p$-Laplacian $\Delta_p$ is defined by
\[
	\Delta_p f = \div \Big(|\grad f|^{p-2} \grad f \Big)
\]
The Dirichlet eigenvalues of the $p$-Laplacian on a smoothly bounded domain $\Omega$ in $\R^n$ are the numbers $\lambda$ such that the equation $-\Delta_p \phi = \lambda |\phi|^{p-2} \phi$ admits a weak solution in $W_0^{1,p}(\Omega)$.
The $p$-Laplacian admits a smallest eigenvalue, denoted $\lambda_{1,p}(\Omega)$.
Lindqvist \cite{Li} showed this eigenvalue is simple on a connected domain, meaning the corresponding eigenfunction is unique up to normalization.
If $\Lip_0(\Omega)$ is the set of Lipschitz functions $f:\Omega \to \R$ which vanish on the boundary of $\Omega$, then $\lambda_{1,p}(\Omega)$ can be characterized variationally by
\[
	\lambda_{1,p}(\Omega) = \inf \bigg\{ \frac{ \int_\Omega | \grad f |^p}{\int_\Omega |f|^p} : f \in \Lip_0(\Omega) \bigg\}
\]
This characterization and the Polya-Szego inequality \cite{PS} imply the Faber-Krahn inequality.
Moreover, Brothers and Ziemer \cite{BZ} proved a uniqueness result.
In particular, the ball is the only minimizer with smooth boundary.

We consider a similar optimization problem for the first Dirichlet eigenvalue of the $p$-Laplacian on a hypersurface in $\R^{2n}$, for $n \ge 2$.
Let $G=O(n) \times O(n)$ and consider the usual action of $G$ on $\R^{2n}$.
Fix an orbit $\O$ of dimension $2n-2$.
Let $\cS$ be the set of all $C^1$ immersed $G$-invariant hypersurfaces in $\R^{2n}$ with one boundary component, given by $\O$.
For a hypersurface $\Sigma$ in $\cS$, the immersion of $\Sigma$ into $\R^{2n}$ induces a continuous Riemannian metric on $\Sigma$.
Let $\Lip_0(\Sigma)$ denote the set of Lipschitz functions $f: \Sigma \to \R$ which vanish on $\O$, and let $dV$ be the Riemannian measure on $\Sigma$.
Let $\lambda_{1,p}(\Sigma)$ denote the first Dirichlet eigenvalue of the $p$-Laplacian, which is given by
\[
\label{rayleigh}
	\lambda_{1,p}(\Sigma) = \inf \bigg\{ \frac{ \int_\Sigma | \grad f |^p \,dV} { \int_\Sigma |f|^p \,dV} : f \in \Lip_0(\Sigma) \bigg\}
\]
If $\O$ is the product of two spheres of the same radius $R$, then let $\Gamma$ be Simons' cone, defined by
\[
	\Gamma = \bigg\{ (x_1, \ldots , x_n, y_1, \ldots, y_n) \in \R^{2n} : x_1^2 + \ldots + x_n^2 = y_1^2 + \ldots + y_n^2 \le R^2 \bigg\}
\]
Let $\Lip_0(\Gamma)$ denote the set of Lipschitz functions $f: \Gamma \to \R$ which vanish on $\O$.
Note that $\Gamma \setminus \{ 0 \}$ is a smooth immersed hypersurface.
This immersion induces a Riemannian metric on $\Gamma \setminus \{ 0 \}$.
Let $dV$ be the Riemannian measure.
Then define
\[
\label{rayleigh}
	\lambda_{1,p}(\Gamma) = \inf \bigg\{ \frac{ \int_{\Gamma \setminus \{ 0 \}} | \grad f |^p \,dV} { \int_{\Gamma \setminus \{ 0 \}} |f|^p \,dV} : f \in \Lip_0(\Gamma) \bigg\}
\]
The following theorem states that if $p \ge 2n-1$, then there is a hypersurface which maximizes the eigenvalue $\lambda_{1,p}$.
This hypersurface is either Simons' cone or a $C^1$ hypersurface in $\cS$.

\begin{Theorem}
\label{thmex}
Fix $n \ge 2$ and $p \ge 2n-1$.
If $\O$ is the product of two spheres of different radii, then there is a $C^1$ embedded surface $\Sigma_p^*$ in $\cS$ such that
\[
	\lambda_{1,p}(\Sigma_p^*) = \sup \Big\{ \lambda_{1,p}(\Sigma) : \Sigma \in \cS \Big\}
\]
If $\O$ is the product of two spheres of the same radius and if
\[
\label{thmhyp}
	\lambda_{1,p}(\Gamma) < \sup \Big\{ \lambda_{1,p}(\Sigma) : \Sigma \in \cS \Big\}
\]
then there is a $C^1$ embedded surface $\Sigma_p^*$ in $\cS$ such that
\[
	\lambda_{1,p}(\Sigma_p^*) = \sup \Big\{ \lambda_{1,p}(\Sigma) : \Sigma \in \cS \Big\}
\]
\end{Theorem}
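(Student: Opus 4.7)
The plan is to exploit $G$-equivariance to reduce the problem to a one-dimensional weighted eigenvalue problem on planar curves, and then apply the direct method of the calculus of variations. Every hypersurface $\Sigma \in \cS$ is uniquely determined by its image in the orbit space $\R^{2n}/G$, which we identify with the closed quadrant $\{(r,s) : r,s \ge 0\}$. This image is a curve $\gamma$ with one endpoint at the point $(R_1,R_2)$ representing $\O$. Because $\lambda_{1,p}$ is simple on a connected domain (Lindqvist), the first eigenfunction must be $G$-invariant, so
\[
\lambda_{1,p}(\Sigma) = \inf \bigg\{ \frac{\int_\gamma |f'|^p \, r^{n-1} s^{n-1} \, d\ell}{\int_\gamma |f|^p \, r^{n-1} s^{n-1} \, d\ell} : f \in \Lip(\gamma),\ f(R_1,R_2) = 0 \bigg\},
\]
where $d\ell$ is arc length along $\gamma$. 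The same reduction applies to $\Gamma$, whose associated curve is the segment from $(R,R)$ to the origin.

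With the problem recast on curves I would take a maximizing sequence $\Sigma_k$ in $\cS$ with corresponding curves $\gamma_k$. A preliminary truncation step ensures that the $\gamma_k$ lie in a fixed compact region of the closed quadrant and have uniformly bounded length, using the observation that a curve which is very long or strays far from $(R_1,R_2)$ admits test functions with small Rayleigh quotient and therefore cannot be nearly maximizing. After reparametrizing on $[0,1]$ by constant fraction of arc length, Arzel\`a--Ascoli extracts a subsequence converging uniformly to a Lipschitz limit curve $\gamma_*$. In the equal-radius case, $\gamma_*$ either stays away from the origin, in which case it defines a candidate hypersurface $\Sigma_*$, or it passes through the origin, in which case it is, up to reparametrization, the generating segment of Simons' cone.

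Next I would prove upper semicontinuity of $\lambda_{1,p}$ along this convergence. Taking $\phi_*$ to be a first eigenfunction of the reduced problem on $\gamma_*$ and pulling it back via the reparametrization to each $\gamma_k$ yields admissible test functions whose weighted Rayleigh quotients on $\gamma_k$ converge to that of $\phi_*$ on $\gamma_*$, provided the weight $r^{n-1} s^{n-1}$ does not concentrate its singularity on the support of $\phi_*$. This gives $\sup \le \lambda_{1,p}(\Sigma_*)$, and equality follows from the definition of supremum. In the different-radius case $\gamma_*$ cannot reach the origin, so $\Sigma_p^* := \Sigma_*$ automatically belongs to $\cS$; in the equal-radius case, the hypothesis (\ref{thmhyp}) rules out $\gamma_*$ being the generating segment of $\Gamma$, so again $\Sigma_p^* \in \cS$. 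The $C^1$ regularity is then read off from the Euler--Lagrange equation satisfied by $\gamma_*$ as a critical curve of the one-dimensional functional, together with standard ODE regularity.

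The hardest step will be the upper semicontinuity of $\lambda_{1,p}$ under curve convergence. The weight $r^{n-1}s^{n-1}$ degenerates on the axes and at the origin, so curves in the sequence that pinch toward the singular set require delicate estimates on test function transport, and one must also control the passage between $G$-invariant and general Lipschitz test functions on the limit surface. The hypothesis $p \ge 2n-1$ is exactly what is needed to ensure $\lambda_{1,p}(\Gamma)$ is finite and to control such degenerations; without it, the inequality (\ref{thmhyp}) would not even make sense.
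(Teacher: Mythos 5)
Your reduction to a one-dimensional weighted Rayleigh quotient on curves in the orbit space matches the paper's starting point, and the broad outline for existence (truncation, Arzel\`a--Ascoli, semicontinuity, ruling out the cone by the hypothesis \eqref{thmhyp}) is in the same spirit as Section 3 of the paper. However, there are two genuine gaps.

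First, your existence step skips a structural difficulty that drives most of Section 3. A uniform Lipschitz bound for $\gamma_k$ with respect to the flat metric on the quadrant would require a bound on arc length $L_g$, but the natural compactness is in the conformally rescaled metric $h = F^2 g$ (whose length is the volume of the hypersurface); a curve with bounded $h$-length can still have infinite $g$-length near the boundary, and then the limit does not correspond to a surface in $\cS$. The paper resolves this by working first in the quotient $Q$ (boundary collapsed to a point) with the $h$-metric, extracting a limit there, and then upgrading to finite $g$-length through two monotonization lemmas (Lemmas~\ref{umono}, \ref{rumono}) in the coordinates $u=(x^2-y^2)/2$, $v=xy$, forcing $u\circ\alpha$ increasing and $r\circ\alpha$ decreasing. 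Only after that monotonization is your dichotomy --- ``the limit either avoids the origin or is the generating segment of $\Gamma$'' --- actually justified. Relatedly, your explanation of $p\ge 2n-1$ is off: $\lambda_{1,p}(\Gamma)$ is finite for all $p>1$; the hypothesis $p\ge 2n-1$ is what makes the inversion-type comparisons (Lemma~\ref{inversion} and the regularity lemmas) eigenvalue-nondecreasing, via the exponent computation $p(2n-2)-(p-1)(2n-1)=2n-1-p\le 0$.

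Second, and more seriously, the $C^1$ regularity claim is unsupported. You propose to ``read off'' regularity from the Euler--Lagrange equation for $\gamma_*$ plus standard ODE theory, but the maximizer is a priori only a Lipschitz curve and the functional $\gamma\mapsto\lambda_{1,p}(\gamma)$ is an infimum over test functions, so its first variation couples the curve variation with the varying eigenfunction; one cannot simply write down a pointwise ODE for a Lipschitz maximizer and invoke bootstrap. The paper's Section 4 avoids this entirely: it proves two ``circle confinement'' lemmas (Lemmas~\ref{circlesinterior}, \ref{circlesendpoint}) by an inversion-in-a-disc comparison (again using $p\ge 2n-1$), showing that a maximizing curve that exits and re-enters a small disc can be strictly improved. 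Differentiability (Lemmas~\ref{abdiff}, \ref{diff}) and then $C^1$ regularity and orthogonal boundary intersection (Lemma~\ref{c1}) are deduced from these geometric comparisons, not from an Euler--Lagrange ODE. Without an argument of this type, or a rigorous derivation and analysis of the curve's Euler--Lagrange system starting from mere Lipschitz regularity, the conclusion that $\Sigma_p^*$ is a $C^1$ hypersurface in $\cS$ does not follow.
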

 
A natural problem motivated by Theorem \ref{thmex} is to determine if \eqref{thmhyp} holds, i.e. if Simons' cone maximizes $\lambda_{1,p}$.
For fixed $n$ and large $p$, we show that Simons' cone does not maximize $\lambda_{1,p}$.
For the case $p=2$ and $n \le 5$, we also show that Simons' cone does not maximize $\lambda_{1,2}$.

\begin{Theorem}
\label{thmnocone}
Assume $\O$ is the product of two spheres of the same radius.
For each $n$, there is a value $p_n$ such that if $p \ge p_n$, then
\[
\label{thmbigp}
	\lambda_{1,p}(\Gamma) < \sup \Big\{ \lambda_{1,p}(\Sigma) : \Sigma \in \cS \Big\}
\]
If $n \le 5$ and $p=2$, then
\[
\label{thmlb}
	\lambda_{1,2}(\Gamma) < \sup \Big\{ \lambda_{1,2}(\Sigma) : \Sigma \in \cS \Big\}
\]
\end{Theorem}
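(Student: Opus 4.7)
The plan for Theorem \ref{thmnocone} is to exhibit, for each case, an explicit surface $\Sigma \in \cS$ satisfying $\lambda_{1,p}(\Sigma) > \lambda_{1,p}(\Gamma)$. Both parts rest on reducing the eigenvalue problem to a one-dimensional weighted $p$-Laplace problem. By the $G$-invariance and the simplicity of $\lambda_{1,p}$, the first eigenfunction is $G$-invariant, so writing $\Sigma$ as the $G$-orbit of a generating curve $\gamma:[0,L]\to\{r,s\ge 0\}$ with $\gamma(L)=(R,R)$, the Rayleigh quotient becomes
\[
	\lambda_{1,p}(\Sigma) = \inf \bigg\{ \frac{\int_0^L |f'|^p (rs)^{n-1}\,dt}{\int_0^L |f|^p (rs)^{n-1}\,dt} : f(L)=0 \bigg\}
\]
with a regularity/free condition at $t=0$. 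Simons' cone corresponds to the diagonal generator ending at $(0,0)$ with $L=R\sqrt{2}$; a smooth $\Sigma\in\cS$ has $\gamma(0)$ on a coordinate axis.

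For the large-$p$ assertion, I would invoke the asymptotic $\lambda_{1,p}(\Sigma)^{1/p} \to 1/\rho(\Sigma)$ as $p\to\infty$, where $\rho(\Sigma)$ is the intrinsic inradius. In the $G$-invariant setting the inradius equals the length of the generating curve, because the distance from any orbit to $\O$ is realized along $\gamma$. Thus $\rho(\Gamma)=R\sqrt{2}$. I would construct a competitor whose generator runs along the diagonal from $(R,R)$ to $(t_0,t_0)$ and then follows the horizontal segment to $(0,t_0)$, smoothed at the corner so that the result lies in $\cS$. The total length is arbitrarily close to $(R-t_0)\sqrt{2}+t_0 < R\sqrt{2}$, so $\rho(\Sigma)<\rho(\Gamma)$, and the asymptotic yields $\lambda_{1,p}(\Sigma)>\lambda_{1,p}(\Gamma)$ for all $p$ larger than some $p_n$.

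For the $p=2$, $n\le 5$ statement, the plan is a one-sided variational argument within the same family of deformations. Parametrize a family $\Sigma_\epsilon\in\cS$ by the corner parameter $\epsilon$, with $\Sigma_0=\Gamma$. The first eigenfunction $\phi$ of $\Gamma$ is $G$-invariant and solves the radial Bessel equation $\phi''+\tfrac{2n-2}{r}\phi'+\lambda_{1,2}(\Gamma)\phi=0$, giving $\lambda_{1,2}(\Gamma)=j_{n-3/2,1}^2/(2R^2)$ in terms of the first zero of $J_{n-3/2}$. Using $\phi$ as an initial test function and tracking the change of the weight $(rs)^{n-1}$ and of the effective length $L_\epsilon$ under the deformation, one can compute $\lambda'(0^+):=\frac{d}{d\epsilon}\lambda_{1,2}(\Sigma_\epsilon)|_{\epsilon=0^+}$ as an explicit one-dimensional integral involving $\phi$, $\phi'$, and $n$. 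The goal is to show this integral is strictly positive precisely when $n\le 5$, so that a small smoothing of the cone tip already raises the eigenvalue.

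The principal obstacle is this $p=2$ analysis: first, the derivative $\lambda'(0^+)$ must be justified despite the conical singularity of $\Gamma$, and second, the sharp dimensional threshold must be extracted from a Bessel-type integral, which is where $n=5$ will presumably enter through a monotonicity property of $j_{n-3/2,1}$ or of a related ratio of Bessel function values. If the first-variation calculation proves recalcitrant, a backup strategy is the spherical-cap comparison: take $\Sigma=\{(x,y):|x|^2+|y|^2=2R^2,\;|x|\le|y|\}$, whose $G$-reduced eigenvalue equation is a hypergeometric ODE on $[0,1/2]$ that can be compared to the Bessel equation of $\Gamma$, reducing the claim to a special-function inequality valid exactly for $n\le 5$. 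The large-$p$ argument, by contrast, is essentially geometric and requires only the elementary observation $1+\sqrt{2}>\sqrt{2}$ together with the inradius asymptotic.
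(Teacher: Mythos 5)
Your plan matches the paper's high-level strategy — the $\infty$-eigenvalue asymptotic for large $p$, and a first-variation computation leading to a Bessel-function inequality for $p=2$ — but both parts carry gaps that the paper avoids or closes.

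For the large-$p$ claim, you invoke the Juutinen--Lindqvist--Manfredi asymptotic $\lambda_{1,p}(\Sigma)^{1/p}\to 1/\rho(\Sigma)$ on the hypersurface $\Sigma$, but that result is proved for smoothly bounded Euclidean domains; extending it to a Riemannian manifold with boundary (or, after reduction, to the 1D weighted problem) is itself a nontrivial step, and the intrinsic-inradius formula ``length of the generating curve'' is asserted rather than proved. The paper sidesteps both issues by a clever choice of objects: $\lambda_{1,p}(\Gamma) = 2^{-p/2}\lambda_{1,p}(B_{2n-1})$ after a change of variables (Lemma \ref{conesol}), and for the competitor it takes the vertical generator $\alpha(t)=(1,1-t)$, whose surface is \emph{isometric} to $B_n\times\S^{n-1}$ and therefore has $\lambda_{1,p}(\alpha)=\lambda_{1,p}(B_n)$; both eigenvalues are those of genuine Euclidean balls, so JLM applies verbatim and the inequality follows from $2^{-1/2}<1$. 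Incidentally, the corner competitor you build is more complicated than needed — the cylinder already has generator length $R < R\sqrt{2}$.

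For $p=2$, your outline is sound and agrees with the paper's mechanism: deform the cone to a smooth surface, compute the one-sided derivative of the eigenvalue at the cone, and reduce positivity to a Bessel inequality. But the entire technical content of that part of the paper is the execution: a precise deformation $\sigma_s$ in $(u,v)$-coordinates, continuity of $s\mapsto\lambda_{1,2}(\sigma_s)$ (Lemma \ref{coneevcont}), convergence of eigenfunctions (Lemma \ref{coneefbound}), a lower bound for the Dini derivative that is justified despite the singular behavior of the weights near $t=1$ (Lemma \ref{conedini}), an integral criterion (Lemma \ref{conemin}), and the Lommel-integral estimate establishing the criterion for $n\le 5$ (Lemma \ref{exp}), followed by a rounding-off step to land in $\cS$ (Lemma \ref{roundoff}). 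Your proposal names the obstacles (justifying the derivative at the singular cone, extracting the dimensional threshold from a Bessel integral) but does not overcome them, so as written it identifies the right route without supplying the argument that makes the $n\le 5$ threshold appear.
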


In particular, for the cases $n=4$ and $n=5$, Simons' cone is area minimizing, but does not maximize the eigenvalue $\lambda_{1,2}$.
This is in contrast to the inverse relationship that the eigenvalue and the volume of a domain often exhibit.
More accurately, the eigenvalues of a domain $\Omega$ in $\R^n$, are inversely related to the Cheeger constant $h(\Omega)$, which is defined by
\[
	h(\Omega) = \inf \bigg\{ \frac{|\del U|}{|U|} : U \subset \Omega \bigg\}
\]
Here $U$ is a smoothly bounded open subset of $\Omega$, and $|\del U|$ is the $(n-1)$-dimensional volume of $\del U$, while $|U|$ is the $n$-dimensional volume of $U$.
Cheeger's inequality states that
\[
\label{cheeger}
	\lambda_{1,p}(\Omega) \ge \bigg( \frac{h(\Omega)}{p} \bigg)^p
\]
Cheeger \cite{Ch} first proved this inequality for the case $p=2$.
Lefton and Wei \cite{LW}, Matei \cite{Ma}, and Takeuchi \cite{Ta} extended this inequality to the case $1<p<\infty$.
Moreover, Kawohl and Fridman \cite{KF} showed that
\[
	\lim_{p \to 1} \lambda_{1,p}(\Omega) = h(\Omega)
\]
We observe that the relationship between the eigenvalue $\lambda_{1,p}$ and the Cheeger constant is strongest for small $p$.
For large $p$, the eigenvalue $\lambda_{1,p}$ is more strongly related to the inradius of $\Omega$, denoted $\operatorname{inrad}(\Omega)$.
Juutinen, Lindqvist, and Manfredi \cite{JLM} proved that
\[
\label{eqjlm}
	\lim_{p \to \infty} \Big( \lambda_{1,p}(\Omega) \Big)^{1/p} = \frac{1}{\operatorname{inrad}(\Omega)}
\]
Moreover, Poliquin \cite{Po} showed that for each $p > n$, there is a constant $C_{n,p}$, independent of $\Omega$, such that
\[
\label{eqpo}
	\Big( \lambda_{1,p}(\Omega) \Big)^{1/p} \ge \frac{C_{n,p}}{\operatorname{inrad}(\Omega)}
\]
In light of \eqref{eqjlm} and \eqref{eqpo}, it is not surprising that Simons' cone does not maximize $\lambda_{1,p}$ for large $p$.
We remark that Grosjean \cite{Gr} established a result similar to \eqref{eqjlm} on a compact Riemannian manifold, with the inradius replaced by half the diameter of the manifold.
Valtorta \cite{Va} and Naber and Valtorta \cite{NV} obtained lower bounds for the first eigenvalue of the $p$-Laplacian in terms of the diameter on a compact Riemannian manifold.

A similar problem to the one described in Theorem \ref{thmex} is to maximize the first Dirichlet eigenvalue among surfaces of revolution in $\R^3$ with one fixed boundary component.
This problem has been studied for the case $p=2$.
It follows from a result of Abreu and Freitas \cite{AF} that the disc maximizes the first Dirichlet eigenvalue.
In fact the disc maximizes all of the Dirichlet eigenvalues \cite{Ar1}.
Moreover, it follows from a result of Colbois, Dryden, and El Soufi \cite{CDES} that a flat $n$-dimensional ball in $\R^{n+1}$ maximizes the first Dirichlet eigenvalue among $O(n)$-invariant hypersurfaces in $\R^{n+1}$ with the same boundary.
Among surfaces of revolution in $\R^3$ with two fixed boundary components, there is a smooth surface which maximizes the first Dirichlet eigenvalue \cite{Ar2}. 

The argument we use to prove Theorem \ref{thmex} is a development of the argument used in \cite{Ar2} to maximize Laplace eigenvalues on surfaces of revolution in $\R^3$.
For the case where $p$ is large, the proof of Theorem \ref{thmnocone} is a simple application of \eqref{eqjlm}.
For the case where $p=2$, we use a variational argument.
In the next section, we reformulate Theorem \ref{thmex} and Theorem \ref{thmnocone} as statements about curves in the orbit space $\R^{2n}/G$.
In the third section, we prove a low regularity version of Theorem \ref{thmex}.
In the fourth section, we complete the proof of Theorem \ref{thmex}.
In the fifth section, we prove Theorem \ref{thmnocone}.

\section{Reformulation}

In this section, we reformulate Theorem \ref{thmex} and Theorem \ref{thmnocone} as statements about curves in the orbit space $\R^{2n}/G$.
Identify $\R^{2n}/G$ with a quarter plane
\[
	\R^{2n}/G = \bigg\{ (x,y) \in \R^2 : x \ge 0, y \ge 0 \bigg\}
\]
A point $(x,y)$ is identified with the orbit
\[
	\Big\{ (x_1, \ldots, x_n, y_1, \ldots, y_n) \in \R^{2n} : x_1^2 + \ldots + x_n^2 = x^2, y_1^2 + \ldots + y_n^2 = y^2 \Big\}
\]
Let $g$ be the orbital distance metric on $\R^{2n}/G$, i.e. $g = dx^2 + dy^2$.
Define a function $F: \R^{2n}/G \to \R$ which maps an orbit to its $(2n-2)$-dimensional volume in $\R^{2n}$.
There is a constant $c_n$ such that
\[
	F(x,y) = c_n \cdot x^{n-1} y^{n-1}
\]
Let $(x_0,y_0)$ be the coordinates of the orbit $\O$.
By symmetry, we may assume that
\[
\label{x0y0}
	x_0 \ge y_0 > 0
\]

For a $C^1$ curve $\alpha:[0,1] \to \R^{2n}/G$, let $L_g(\alpha)$ be the length of $\alpha$ with respect to $g$.
Let $\cC$ be the set of $C^1$ curves $\alpha: [0,1] \to \R^{2n}/G$ which satisfy the following properties.
First $\alpha(0)=(x_0,y_0)$ and $\alpha(1)$ is in the boundary of $\R^{2n}/G$.
Second $\alpha(t)$ is in the interior of $\R^{2n}/G$ for every $t$ in $[0,1)$.
Third $|\alpha'(t)|_g=L_g(\alpha)$ for every $t$ in $[0,1]$.
Fourth $\alpha$ intersects the boundary of $\R^{2n}/G$ away from the origin, and the intersection is orthogonal.
If $\alpha$ is a curve in $\cC$, let $F_\alpha = F \circ \alpha$.
Let $\Lip_0([0,1])$ be the set of Lipschitz functions $w:[0,1] \to \R$ which vanish at zero.
Then define
\[
\label{ccrq}
	\lambda_{1,p}(\alpha) = \inf \left \{ \frac{ \int_0^1 \frac{|w'|^p F_\alpha}{|\alpha '|_g^{p-1}} \,dt}{ \int_0^1 |w|^p F_\alpha | \alpha' |_g \,dt} : w \in \Lip_0([0,1]) \right \}
\]
For a function $w$ in $\Lip_0([0,1])$, the Rayleigh quotient of $w$ is
\[
	\frac{ \int_0^1 \frac{|w'|^p F_\alpha}{|\alpha '|_g^{p-1}} \,dt}{ \int_0^1 |w|^p F_\alpha | \alpha' |_g \,dt}
\]
Note that if $\alpha$ is in $\cC$, then there is a corresponding surface $\Sigma$ in $\cS$ such that $\alpha$ parametrizes the projection of $\Sigma$ in $\R^{2n}/G$.
Moreover $\lambda_{1,p}(\Sigma) = \lambda_{1,p}(\alpha)$, because the first eigenfunction on $\Sigma$ is $G$-invariant.
Furthermore, if $\Sigma$ is a surface in $\cS$ and $\lambda_{1,p}(\Sigma)$ is non-zero, then $\Sigma$ is connected and there is a curve $\alpha$ in $\cC$ corresponding to $\Sigma$.
In particular,
\[
	\sup \Big\{ \lambda_{1,p}(\Sigma) : \Sigma \in \cS \Big\} = \sup \Big\{ \lambda_{1,p}(\alpha) : \alpha \in \cC \Big\}
\]

If $x_0=y_0=R$, then define a curve $\sigma: [0,1] \to \R^{2n}/G$ by
\[
	\sigma(t)= (1-t) \cdot (R, R)
\]
Let $F_\sigma = F \circ \sigma$ and define
\[
	\lambda_{1,p}(\sigma) = \inf \left \{ \frac{ \int_0^1 \frac{|w'|^p F_\sigma}{|\sigma '|_g^{p-1}} \,dt}{ \int_0^1 |w|^p F_\sigma | \sigma' |_g \,dt} : w \in \Lip_0([0,1]) \right \}
\]
This curve corresponds to Simons' cone $\Gamma$ in $\R^{2n}/G$, and $\lambda_{1,p}(\Gamma) = \lambda_{1,p}(\sigma)$.

\begin{Lemma}
\label{exreg}
Fix $n \ge 2$ and $p \ge 2n-1$.
If $x_0 \neq y_0$ then there is a simple curve $\alpha$ in $\cC$ such that
\[
	\lambda_{1,p}(\alpha) = \sup \Big\{ \lambda_{1,p}(\beta) : \beta \in \cC \Big\}
\]
If $x_0=y_0$ and if
\[
	\lambda_{1,p}(\sigma) < \sup \Big\{ \lambda_{1,p}(\beta) : \beta \in \cC \Big\}
\]
then there is a simple curve $\alpha$ in $\cC$ such that
\[
	\lambda_{1,p}(\alpha) = \sup \Big\{ \lambda_{1,p}(\beta) : \beta \in \cC \Big\}
\]
\end{Lemma}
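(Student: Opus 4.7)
The plan is to prove Lemma~\ref{exreg} by the direct method of the calculus of variations applied to $\lambda_{1,p}$ on $\cC$. The constant-speed parametrization built into the definition of $\cC$ means uniform length bounds will yield uniform Lipschitz bounds, and the argument naturally splits into two stages matching the structure announced in the introduction: Section~3 produces a Lipschitz subsequential limit of a maximizing sequence and shows it is an admissible competitor that realizes the supremum, while Section~4 upgrades the resulting limit to a simple $C^1$ curve via the Euler--Lagrange equation associated with \eqref{ccrq}.

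Concretely, I would proceed in the following steps. First, show that $\sup\lambda_{1,p}$ is finite by testing \eqref{ccrq} against an explicit function linear in the arclength coordinate, and strictly positive since $\cC$ is nonempty. Second, given a maximizing sequence $\beta_k$, prove that the lengths $L_g(\beta_k)$ remain in a bounded interval away from zero: an upper bound comes from the scaling estimate that $\lambda_{1,p}(\beta) \to 0$ as $L_g(\beta) \to \infty$ for curves with bounded image, while the lower bound is forced by the starting-point constraint $\beta_k(0) = (x_0,y_0)$ together with the requirement that $\beta_k(1)$ lie on the boundary. Third, apply Arzelà--Ascoli to extract a uniform limit $\alpha : [0,1] \to \R^{2n}/G$, and establish upper semicontinuity of $\lambda_{1,p}$ by pulling back the $\beta_k$-eigenfunctions to compare Rayleigh quotients; the hypothesis $p \ge 2n-1$ enters here to give the integrability of $F_\alpha$ and its inverse powers near any boundary point that $\alpha$ might meet. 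Fourth, having obtained a maximizer $\alpha$, deduce $C^1$ regularity from the one-dimensional $p$-Laplace Euler--Lagrange equation and standard regularity theory; simplicity follows from a cut-and-graft argument in which removing a self-intersection strictly raises the Rayleigh quotient.

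The main obstacle is ruling out the scenario in which the limit curve $\alpha$ passes through the origin, where $F_\alpha$ vanishes and $\alpha$ falls out of $\cC$. When $x_0 \neq y_0$, the orthogonality condition at the free endpoint together with the uniform length bounds prevents this directly: a constant-speed curve of bounded length starting at $(x_0, y_0)$ with $x_0 \neq y_0$ cannot approach the origin while meeting the boundary orthogonally away from it. In the symmetric case $x_0 = y_0$, the only degenerate limit compatible with the uniform bounds is $\sigma$ itself, and the standing hypothesis $\lambda_{1,p}(\sigma) < \sup\lambda_{1,p}$ is precisely what forbids the maximizing sequence from collapsing to $\sigma$: the delicate point is to show that any maximizing sequence whose limit hits the origin must in fact be a reparametrization converging to $\sigma$, and then to conclude by semicontinuity of the Rayleigh quotient that such a limit cannot exceed $\lambda_{1,p}(\sigma)$. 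Making this comparison stable under the weak convergence, especially controlling the behavior of the eigenfunctions and of the weight $F$ near the origin where $p \ge 2n-1$ is crucial for the requisite integrability, is where most of the technical work of the proof will lie.
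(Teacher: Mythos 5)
Your overall architecture (direct method, Arzel\`a--Ascoli compactness, then upgrade to $C^1$) matches the paper, and you have correctly identified the crux --- ruling out the limit curve passing through the origin --- but the two devices the paper uses to make this work are absent or replaced by arguments that do not hold up, so there are real gaps.

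First, the ``ruling out the origin'' step. You assert that for $x_0 \neq y_0$ a constant-speed curve of bounded length starting at $(x_0,y_0)$ ``cannot approach the origin while meeting the boundary orthogonally away from it.'' But the limit of a sequence in $\cC$ is not a priori in $\cC$: the orthogonality condition is not closed under uniform convergence, and there is no geometric obstruction to a Lipschitz limit curve reaching the origin from $(x_0,y_0)$. What actually forces the endpoint off the origin in the paper is a symmetrization step (Lemma~\ref{umono}): in the coordinates $u=\tfrac12(x^2-y^2)$, $v=xy$, one replaces $u\circ\beta$ by the running supremum of $|u\circ\beta|$, which does not decrease $\lambda_{1,p}$ (since $F$ depends only on $v$ and the speed only decreases) and produces a maximizer with $u\circ\alpha$ monotone increasing; combining with a radial monotonization in $r=\sqrt{u^2+v^2}$ (Lemma~\ref{rumono}) shows that if the new curve ends at the origin then $u\equiv 0$, so $x_0=y_0$ and the curve is a reparametrization of $\sigma$. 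Without such a symmetrization, your argument for $x_0\neq y_0$ has no proof, and your claim for $x_0=y_0$ that a maximizing sequence collapsing to the origin ``must in fact be a reparametrization converging to $\sigma$'' is false in general --- a limit curve can hit the origin without being anything like $\sigma$. The hypotheses on $x_0,y_0$ are used precisely through this monotonization, not through a direct compactness/orthogonality argument.

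Second, the regularity step. You propose to get $C^1$ regularity ``from the one-dimensional $p$-Laplace Euler--Lagrange equation and standard regularity theory,'' but that Euler--Lagrange equation governs the \emph{eigenfunction} $\phi$ on a fixed curve, not the \emph{curve} $\alpha$ itself. The object whose regularity you need is the maximizer of a max--min problem, and there is no off-the-shelf elliptic regularity for it. The paper instead gives a bespoke geometric comparison (Lemmas~\ref{circlesinterior}--\ref{c1}): one shows that the optimal curve, whenever it crosses a small circle twice, must stay inside the circle --- because circular inversion of the arc would strictly increase the Rayleigh quotient, using the existence and positivity of the eigenfunction (Lemma~\ref{efex}) and the test-function bound (Lemma~\ref{liplemma}) --- and then deduces differentiability, continuity of the derivative, and orthogonality at the boundary from these two-sided disc confinements. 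Simplicity also comes for free from the $u$- and $r$-monotonicity above, rather than from a cut-and-graft argument. You should replace ``standard regularity theory'' with an argument of this comparison type, and you need to add the $u$/$r$ monotonization lemmas to make the endpoint analysis in both cases go through.
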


Lemma \ref{exreg} immediately yields Theorem \ref{thmex}.
In the third section, we prove low regularity versions of Lemma \ref{exreg}, replacing $\cC$ with larger spaces of curves.
First, we identify points on the boundary of $\R^{2n}/G$ to obtain a quotient space $Q$, and we prove existence in a space $\cR_h$ of curves in $Q$ which correspond to hypersurfaces in $\R^{2n}$ of finite volume.
Then we obtain a maximizing curve in a space $\cR_h^+$ of curves in $\R^{2n}/G$ which project to curves in $\cR_h$.
Finally, we prove existence in a space $\cR_g^*$ of curves in $\R^{2n}/G$ which have finite length and intersect the boundary transversally, away from the singular point.
In the fourth section, we complete the proof of Lemma \ref{exreg} by showing that a maximizing curve in $\cR_g^*$ must be in $\cC$.

\begin{Lemma}
\label{lemnocone}
Assume $x_0=y_0$.
For each $n$, there is a value $p_n$ such that if $p \ge p_n$, then
\[
	\lambda_{1,p}(\sigma) < \sup \Big\{ \lambda_{1,p}(\alpha) : \alpha \in \cC \Big\}
\]
If $n \le 5$ and $p=2$, then
\[
	\lambda_{1,2}(\sigma) < \sup \Big\{ \lambda_{1,2}(\alpha) : \alpha \in \cC \Big\}
\]
\end{Lemma}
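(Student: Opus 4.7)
The plan is to split into the two regimes, which require different techniques. The large-$p$ case is an inradius comparison via \eqref{eqjlm}, while the $p=2$, $n\le 5$ case requires an explicit eigenvalue computation together with a variational perturbation near the tip of $\sigma$.

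For the first assertion, take $\alpha\in\cC$ to be the straight vertical segment from $(R,R)$ to $(R,0)$, which meets the $x$-axis orthogonally at $(R,0)$, away from the origin. Reparametrized by arc length, the Rayleigh quotient for $\alpha$ becomes a weighted one-dimensional problem on $[0,R]$ with weight proportional to $(R-s)^{n-1}$, while $\sigma$ gives an analogous problem on $[0,\sqrt{2}R]$ with weight proportional to $(R-s/\sqrt{2})^{2n-2}$. A one-dimensional analog of \eqref{eqjlm} in this weighted setting asserts that $(\lambda_{1,p})^{1/p}\to 1/L$ as $p\to\infty$, where $L$ is the length of the curve. The upper bound is obtained by testing $u(s)=s/L$; the lower bound uses the H\"older estimate $|u(s_0)|^p\le \bigl(\int_0^L|u'|^pF\,ds\bigr)\bigl(\int_0^L F^{-1/(p-1)}\,ds\bigr)^{p-1}$, which is meaningful once $F^{-1/(p-1)}$ is integrable, true for $\sigma$ as soon as $p>2n-1$ and hence for all $p$ sufficiently large. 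Since $L=R$ for $\alpha$ and $L=\sqrt{2}R$ for $\sigma$, we obtain $\lambda_{1,p}(\alpha)^{1/p}\to 1/R$ while $\lambda_{1,p}(\sigma)^{1/p}\to 1/(\sqrt{2}R)$, so taking $p_n$ large yields the first inequality.

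For the second assertion, begin by computing $\lambda_{1,2}(\sigma)$ explicitly. Substituting $r=R-s/\sqrt{2}$ along $\sigma$ and using $F_\sigma=c_n r^{2n-2}$, the Rayleigh quotient for $\sigma$ reduces to half of the first Dirichlet eigenvalue of $v\mapsto -r^{-(2n-2)}(r^{2n-2}v')'$ on $[0,R]$ with $v(R)=0$; the eigenfunction is a Bessel function and $\lambda_{1,2}(\sigma)=j_{(2n-3)/2,1}^2/(2R^2)$. The same reduction applied to the straight vertical competitor above gives $\lambda_{1,2}(\alpha)=j_{(n-2)/2,1}^2/R^2$. For $n=2$ the inequality $2j_{0,1}^2>\pi^2=j_{1/2,1}^2$ is elementary and $\alpha$ already beats $\sigma$. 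For $n=3,4,5$ the straight vertical is insufficient, so one considers instead a family $\alpha_\epsilon\in\cC$ that agrees with $\sigma$ on $\{r\ge\epsilon\}$ and replaces the tip $\{r<\epsilon\}$ by a $C^1$ arc meeting an axis orthogonally at a point of distance $O(\epsilon)$ from the origin. Computing the first-order Hadamard-type variation of the eigenvalue as the cap is attached yields an expansion $\lambda_{1,2}(\alpha_\epsilon)=\lambda_{1,2}(\sigma)+c_n\,\epsilon^{\gamma_n}+o(\epsilon^{\gamma_n})$ whose leading coefficient is explicit in terms of $\varphi_\sigma(0)$, $\varphi_\sigma'$ near $r=0$, and the geometry of the cap. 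A case-by-case check shows $c_n>0$ for $n\in\{3,4,5\}$, producing a competitor that strictly exceeds $\sigma$.

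The main obstacle is the sign analysis for $n=3,4,5$. Although $\varphi_\sigma$ is completely explicit in terms of Bessel functions, verifying that the leading variational correction is strictly positive involves a dimension-dependent cancellation that only works in the stated range. This is analogous in spirit to, though technically distinct from, the classical dimension threshold in Simons' stability calculation for minimal cones, and it is the technical heart of the $p=2$ half of Lemma \ref{lemnocone}.
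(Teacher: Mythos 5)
The large-$p$ half of your argument is essentially the paper's. You compare the cone against the straight vertical competitor $(R,R)\to(R,0)$, and the mechanism in both cases is that the cone is longer by a factor $\sqrt 2$, so that $\lambda_{1,p}(\sigma)^{1/p}\to 1/(\sqrt 2 R)$ while $\lambda_{1,p}(\alpha)^{1/p}\to 1/R$. You derive the asymptotics from a weighted one-dimensional H\"older estimate; the paper instead identifies the corresponding surfaces with $B_n\times\S^{n-1}$ and, via Lemma \ref{conesol}, transforms the cone problem into $B_{2n-1}$, then quotes \cite{JLM} for Euclidean balls. These are equivalent routes to the same conclusion, and this part is fine.

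Your Bessel reduction for $p=2$ is also correct, and your observation that the vertical segment already beats the cone when $n=2$ (since $2j_{0,1}^2>\pi^2$) is a small shortcut the paper does not explicitly single out. The gap is in $n=3,4,5$. You propose to keep the cone fixed on $\{r\ge\epsilon\}$, replace only the tip by a small cap, and compute a ``Hadamard-type variation'' whose leading coefficient $c_n$ you assert is positive by a ``case-by-case check.'' That assertion is the entire content of this half of the lemma, and it is left completely unverified: no coefficient $c_n$, no exponent $\gamma_n$, and not even a precise cap family is written down. It is also not obvious that a purely local cap perturbation has a positive first variation, because the paper does not perturb only the tip. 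It uses the one-parameter family $\sigma_s$ defined in $(u,v)$-coordinates by $(u\circ\sigma_s,v\circ\sigma_s)=(st,1-t)$, which tilts the \emph{entire} curve so that its endpoint migrates along the boundary to $(\sqrt{2s},0)$. The lower bound for the Dini derivative (Lemmas \ref{conedini}--\ref{conemin}) is an integral over all of $[0,1]$ in which the contribution away from the tip is negative and the contribution near the tip is positive; the threshold $n\le 5$ is exactly where the positive part wins, and this is verified by reducing the integral to an explicit Bessel expression via Lommel's formula in Lemma \ref{exp}. Your cap family is, in addition, a singular boundary-layer perturbation at scale $\epsilon$, for which even the form $\lambda_{1,2}(\alpha_\epsilon)=\lambda_{1,2}(\sigma)+c_n\epsilon^{\gamma_n}+o(\epsilon^{\gamma_n})$ requires justification; the paper avoids this by choosing a family with smooth $s$-dependence of the coefficients $P_s,Q_s$. (A minor point in your favor: the paper's $\sigma_s$ with $s>0$ is not in $\cC$, which forces the rounding-off step of Lemma \ref{roundoff}; your caps would be in $\cC$ by construction. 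But that does not repair the missing sign computation, which is the technical heart of the lemma.)
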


Lemma \ref{lemnocone} immediately yields Theorem \ref{thmnocone}.
We prove Lemma \ref{lemnocone} in the fifth section of the article.
For the case where $p$ is large, the proof is a simple application of \eqref{eqjlm}.
For the case where $p=2$, we use a variational argument.

\section{Existence}

In this section we prove a low regularity version of Lemma \ref{exreg}.
We first extend the defintion of $\lambda_{1,p}$ to low regularity curves.
Define a Riemannian metric $h$ on the interior of $\R^{2n}/G$ by
\[
	h = F^2 \cdot g = c_n^2 \cdot x^{2n-2} y^{2n-2}  \Big( dx^2 + dy^2 \Big)
\]
The length of a curve in $\R^{2n}/G$ with respect to $h$ is the $(2n-1)$-dimensional volume of the corresponding $G$-invariant hypersurface in $\R^{2n}$.
Define an equivalence relation on $\R^{2n}/G$ such that each point in the interior is only equivalent to itself, and any two points on the boundary are equivalent.
Let $Q$ be the quotient space of $\R^{2n}/G$ with respect to this equivalence relation.
Let $Q_0$ be the image of the interior of $\R^{2n}/G$ under the quotient map.
Let $Q_B$ denote the remaining point in $Q$ which is the image of the boundary of $\R^{2n}/G$.
Then $Q = Q_0 \cup \{ Q_B \}$.
We view $Q$ as a metric space, with distance function induced by $h$.
The function $F: \R^{2n}/G \to \R$ induces a function on $Q$, which we also denote by $F$.
Let $\alpha: [c,d] \to Q$ be a Lipschitz curve such that $\alpha(t) \neq Q_B$ for all $t$ in $[c,d)$ and $\alpha(d)=Q_B$.
Let $\Lip_0([c,d])$ be the set of Lipschitz functions $w:[c,d] \to \R$ which vanish at $c$.
Let $F_\alpha = F \circ \alpha$ and define
\[
\label{rhrq}
	\lambda_{1,p}(\alpha) = \inf \left \{ \frac{ \int_c^d \frac{| w' |^p F_\alpha^p}{|\alpha'|_h^{p-1}} \,dt}{\int_c^d | w |^p |\alpha'|_h \,dt} : w \in \Lip_0([c,d]) \right \}
\]
If the integrand in the numerator takes the form $0/0$ at some point in $[c,d]$, then we interpret the integrand as being equal to zero at this point.
If the Rayleigh quotient takes the form $0/0$, then we interpret the Rayleigh quotient as being infinite.
Let $\cR_h$ be the set of Lipschitz curves $\alpha: [0,1] \to Q$ such that $\alpha(0)=(x_0,y_0)$ and $\alpha(1)=Q_B$ and $\alpha(t)$ is in $Q_0$ for every $t$ in $[0,1)$.
Note that a curve in $\cC$ can be identified with a curve in $\cR_h$, by composing with the quotient map $\R^{2n}/G \to Q$.
Making this identification, the definitions \eqref{ccrq} and \eqref{rhrq} are the same.

For a Lipschitz curve $\gamma:[c,d] \to Q$, let $L_h(\gamma)$ denote the length of $\gamma$.
In the following lemma, we prove that reparametrizing a curve by arc length with respect to $h$ does not decrease the eigenvalue.

\begin{Lemma}
\label{harc}
Let $\gamma: [c,d] \to Q$ be a Lipschitz curve such that $\gamma(c) = (x_0,y_0)$ and $\gamma(d) = Q_B$.
Assume that $\gamma(t) \neq Q_B$ for all $t$ in $[c,d)$.
Define $\ell_h:[c,d] \to [0,1]$ by
\[
	\ell_h(t) = \frac{1}{L_h(\gamma)} \int_c^t | \gamma'(u)|_h \,du
\]
There is a curve $\beta$ in $\cR_h$ such that $\beta(\ell_h(t)) = \gamma(t)$ for all $t$ in $[c,d]$.
Moreover $| \beta'(t) |_h = L_h(\beta)$ for almost every $t$ in $[0,1]$, and $L_h(\beta)=L_h(\gamma)$.
Furthermore $\lambda_{1,p}(\beta) \ge \lambda_{1,p}(\gamma)$ for every $p \ge 2n-1$.
\end{Lemma}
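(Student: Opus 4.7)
The plan is to define $\beta$ as the reparametrization of $\gamma$ by $h$-arc length, and then to show that for every test function $v$ for $\beta$ the composition $w = v \circ \ell_h$ is a test function for $\gamma$ whose Rayleigh quotient \emph{equals} that of $v$. The eigenvalue inequality is then immediate from taking the infimum.

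First I would verify the basic properties of $\ell_h$ and use them to construct $\beta$. The function $\ell_h$ is Lipschitz and non-decreasing with $\ell_h(c)=0$ and $\ell_h(d)=1$. Suppose $\ell_h(t_1) = \ell_h(t_2)$ for some $c \le t_1 < t_2 \le d$. Then $|\gamma'|_h = 0$ almost everywhere on $[t_1,t_2]$; on $[t_1,t_2] \cap [c,d)$ the curve $\gamma$ takes values in $Q_0$, where $F>0$, so the identity $|\gamma'|_h = (F\circ\gamma)\,|\gamma'|_g$ forces $|\gamma'|_g = 0$ a.e., and $\gamma$ is constant there. Continuity of $\gamma$ at $d$ then rules out $\ell_h^{-1}(\{1\})$ being larger than $\{d\}$, since $\gamma$ cannot jump from a point of $Q_0$ to $Q_B$. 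Setting $\beta(s) := \gamma(t)$ for any $t \in \ell_h^{-1}(s)$ gives a well-defined curve $\beta:[0,1]\to Q$ with $\beta(0)=(x_0,y_0)$, $\beta(1)=Q_B$, and $\beta(s) \in Q_0$ for $s<1$. For $s_1<s_2$ with $t_i \in \ell_h^{-1}(s_i)$, $\beta|_{[s_1,s_2]}$ is a reparametrization of $\gamma|_{[t_1,t_2]}$, which gives $L_h(\beta|_{[s_1,s_2]}) = L_h(\gamma)(s_2-s_1)$, so $\beta \in \cR_h$, $L_h(\beta) = L_h(\gamma)$, and $|\beta'|_h = L_h(\gamma)$ a.e.

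Next I would compare Rayleigh quotients directly. Given $v \in \Lip_0([0,1])$, set $w = v \circ \ell_h \in \Lip_0([c,d])$ and $L := L_h(\gamma)$. The chain rule for Lipschitz compositions yields $w'(t) = v'(\ell_h(t))\, |\gamma'(t)|_h / L$ almost everywhere, interpreted as $0$ wherever $|\gamma'(t)|_h = 0$. Since $F_\gamma(t) = F_\beta(\ell_h(t))$, a direct computation gives
\[
	\frac{|w'(t)|^p F_\gamma(t)^p}{|\gamma'(t)|_h^{p-1}} = \frac{|v'(\ell_h(t))|^p F_\beta(\ell_h(t))^p\, |\gamma'(t)|_h}{L^p}
\]
consistent with the $0/0$ convention. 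Because $\ell_h$ is Lipschitz and monotone with $\ell_h'(t) = |\gamma'(t)|_h / L$ a.e., the change of variables $s = \ell_h(t)$ applied to each integrand yields
\[
	\int_c^d \frac{|w'|^p F_\gamma^p}{|\gamma'|_h^{p-1}} \, dt = \frac{1}{L^{p-1}} \int_0^1 |v'|^p F_\beta^p \, ds = \int_0^1 \frac{|v'|^p F_\beta^p}{|\beta'|_h^{p-1}} \, ds
\]
and $\int_c^d |w|^p |\gamma'|_h \, dt = L \int_0^1 |v|^p \, ds = \int_0^1 |v|^p |\beta'|_h \, ds$. Hence the Rayleigh quotient of $w$ for $\gamma$ equals that of $v$ for $\beta$, and taking the infimum over $v$ gives $\lambda_{1,p}(\gamma) \le \lambda_{1,p}(\beta)$.

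The main technical point is the well-definedness of $\beta$ in the first step: excluding subintervals of $[c,d)$ on which $\gamma$ is non-constant but $|\gamma'|_h = 0$ is exactly where positivity of $F$ on $Q_0$ enters, and continuity of $\gamma$ at $d$ is what prevents $\ell_h$ from reaching $1$ prematurely. The chain-rule step is routine for Lipschitz monotone reparametrizations once the $0/0$ convention is fixed; the hypothesis $p \ge 2n-1$ is not actually invoked by the reparametrization argument and presumably appears in the statement only to match the surrounding existence theory.
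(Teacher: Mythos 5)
Your proposal is correct and takes essentially the same approach as the paper's proof: reparametrize $\gamma$ by $h$-arc length via $\ell_h$, and use the change of variables $s=\ell_h(t)$ to show that composing test functions with $\ell_h$ preserves the Rayleigh quotient. You supply more detail than the published proof on the well-definedness of $\beta$ (where the paper instead takes $\beta=\gamma\circ\eta$ with $\eta(s)=\min\ell_h^{-1}(s)$) and on the chain-rule step, and your observation that the hypothesis $p\ge 2n-1$ plays no role in this particular lemma is accurate.
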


\begin{proof}
Define $\eta:[0,1] \to [c,d]$ by
\[
	\eta(s) = \min \Big\{ t \in [c,d] : \ell_h(t)=s \Big\}
\]
Note that $\eta$ may not be continuous, but $\beta = \gamma \circ \eta$ is in $\cR_h$, and $\beta(\ell_h(t)) = \gamma(t)$ for all $t$ in $[c,d]$.
Also $| \beta'(t) |_h = L_h(\gamma)$ for almost every $t$ in $[0,1]$, so $L_h(\beta)=L_h(\gamma)$.
Let $F_\gamma = F \circ \gamma$ and $F_\beta = F \circ \beta$.
Let $w$ be a function in $\Lip_0([0,1])$.
Define $v = w \circ \ell_h$.
Then $v$ is in $\Lip_0([c,d])$, and changing variables yields
\[
	\lambda_{1,p}(\gamma) \le \frac{ \int_c^d \frac{| v' |^p F_\gamma^p}{|\gamma'|_h^{p-1}} \,dt}{\int_c^d | v |^p |\gamma'|_h \,dt} = \frac{ \int_0^1 \frac{| w' |^p F_\beta^p}{|\beta'|_h^{p-1}} \,dt}{\int_0^1 | w |^p |\beta'|_h \,dt}
\]
Since $w$ is arbitrary, this implies that $\lambda_{1,p}(\gamma) \le \lambda_{1,p}(\beta)$.
\end{proof}

In the following lemma, we bound the length $L_h(\gamma)$ of a curve $\gamma$ in $\cR_h$ in terms of the eigenvalue $\lambda_{1,p}(\gamma)$.

\begin{Lemma}
\label{hLbound}
Fix $p \ge 2n-1$.
There is a constant $C_p$ such that for any $\gamma$ in $\cR_h$,
\[
	L_h(\gamma) \le \frac{C_p}{\lambda_{1,p}(\gamma)}
\]
\end{Lemma}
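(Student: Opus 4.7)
The approach is to exhibit, for an arbitrary $\gamma \in \cR_h$, a single test function $w \in \Lip_0([0,1])$ in the variational definition of $\lambda_{1,p}(\gamma)$ whose Rayleigh quotient is bounded by $C_p / L_h(\gamma)$. First I would apply Lemma~\ref{harc} to replace $\gamma$ by its $h$-arc-length reparametrization $\beta \in \cR_h$, which satisfies $|\beta'(t)|_h = L$ a.e.\ on $[0,1]$, where $L := L_h(\gamma) = L_h(\beta)$, and also $\lambda_{1,p}(\beta) \ge \lambda_{1,p}(\gamma)$. It therefore suffices to produce a $C_p$, independent of $\gamma$, with $L \cdot \lambda_{1,p}(\beta) \le C_p$.

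Two geometric quantities at the starting point come next. Set $F_0 = F(x_0,y_0) > 0$. Continuity of $F$ together with nondegeneracy of $h = F^2 g$ near $(x_0,y_0)$ gives an $h$-radius $r_0 > 0$ such that $F \le 2 F_0$ throughout the $h$-ball $B_h((x_0,y_0), r_0) \subset \R^{2n}/G$. Let $d_0 > 0$ be the $h$-distance from $(x_0,y_0)$ to the boundary of $\R^{2n}/G$; because any $\gamma \in \cR_h$ runs from $(x_0,y_0)$ to $Q_B$, this yields the uniform lower bound $L \ge d_0$. For a parameter $\epsilon \in (0,1/2]$ to be chosen, I would take the hat function $w(t) = \min(t/\epsilon, 1)$. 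Since $|\beta'|_h = L$, the Rayleigh quotient reduces to
\[
	R(w) = \frac{1}{L^p} \cdot \frac{\epsilon^{-p} \int_0^\epsilon F_\beta^p \, dt}{\int_0^1 w^p \, dt},
\]
whose denominator is at least $1/2$. Because $|\beta'|_h = L$, for $t \in [0,\epsilon]$ the point $\beta(t)$ lies within $h$-distance $L\epsilon$ of $(x_0,y_0)$, so as long as $L\epsilon \le r_0$ one has $F_\beta \le 2F_0$ on $[0,\epsilon]$ and the numerator is at most $(2F_0)^p \epsilon$. Combining these bounds gives
\[
	L \cdot \lambda_{1,p}(\beta) \;\le\; L \cdot R(w) \;\le\; \frac{2(2F_0)^p}{\epsilon^{p-1} L^{p-1}}.
\]
Choosing $\epsilon = r_0/L$ when $L \ge 2r_0$ yields the bound $2(2F_0)^p/r_0^{p-1}$; choosing $\epsilon = 1/2$ when $L < 2r_0$ yields $2^p(2F_0)^p/L^{p-1} \le 2^p(2F_0)^p/d_0^{p-1}$ via $L \ge d_0$. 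The constant $C_p$ is the maximum of these two.

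The main conceptual point is where to localize the test function. The curve $\beta$ can wander into regions of arbitrarily large $F$, so the integral $\int_0^1 F_\beta^p\, dt$ admits no uniform bound in $L$; a test function supported far from $(x_0,y_0)$ would therefore give no useful estimate. Supporting $w$ near the fixed starting point replaces this with the pointwise bound $F_\beta \le 2F_0$ on a neighborhood that depends only on the problem data. After this observation the computation is routine, and the hypothesis $p \ge 2n - 1$ enters only through the invocation of Lemma~\ref{harc}.
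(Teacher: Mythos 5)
Your proof is correct and follows essentially the same approach as the paper's: reparametrize by $h$-arc-length via Lemma~\ref{harc}, then test with a ramp function localized near $(x_0,y_0)$ so that $F_\beta$ is uniformly bounded on its support. You make the implicit lower bound $L_h(\gamma)\ge d_0$ explicit and split into two cases where the paper simply takes $r$ ``small,'' but the underlying estimate and constants are the same.
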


\begin{proof}
Let $\beta$ be the reparametrization given by Lemma \ref{harc} so that $\lambda_{1,p}(\beta) \ge \lambda_{1,p}(\gamma)$ and $| \beta'(t) |_h = L_h(\gamma)$ for almost every $t$ in $[0,1]$.
Let $r>0$ be a small number.
Define $w: [0,1] \to \R$ by
\[
	w(t) =
	\begin{cases}
		\frac{L_h(\gamma)}{r} \cdot t & 0 \le t \le \frac{r}{L_h(\gamma)} \\
		1 & \frac{r}{L_h(\gamma)} \le t \le 1 \\
	\end{cases}
\]
Let $F_\beta = F \circ \beta$.
Then there is a constant $C_p$, which is independent of $\gamma$ and $\beta$, such that
\[
\label{hLbound1}
	\lambda_{1,p}(\gamma) \le \lambda_{1,p}(\beta)
	\le \frac{ \int_0^{\frac{r}{L_h(\gamma)}} \frac{| w' |^p F_\beta^p}{|\beta'|_h^{p-1}} \,dt}{\int_{\frac{r}{L_h(\gamma)}}^1 | w |^p |\beta'|_h \,dt} \le \frac{C_p}{L_h(\gamma)}
\]
To verify the last inequality, note that if $r$ is small, then there is a constant $C_p'$ such that $F_\beta^p \le C_p'$ over $[0,r/L_h(\gamma)]$.
Therefore,
\[
	\frac{ \int_0^{\frac{r}{L_h(\gamma)}} \frac{| w' |^p F_\beta^p}{|\beta'|_h^{p-1}} \,dt}{\int_{\frac{r}{L_h(\gamma)}}^1 | w |^p |\beta'|_h \,dt}
		\le \frac{C_p'}{L_h(\gamma) r^{p-1}(1-\frac{r}{L_h(\gamma)})}
\]
If $r$ is small, this establishes \eqref{hLbound1}.
\end{proof}

The purpose of the next lemma is to show that there is an eigenvalue maximizing sequence of curves in $\cR_h$ whose images are contained in a fixed compact subset of $Q$.
Let $\rho_0=\sqrt{x_0^2+y_0^2}$ and define
\[
	K = \Big\{ (x,y) \in \R^{2n}/G : x^2+y^2 \le \rho_0^2 \Big\}
\]
Let $Q_K$ be the image of $K$ under the quotient map $\R^{2n}/G \to Q$.

\begin{Lemma}
\label{inversion}
Let $\alpha$ be a curve in $\cR_h$.
There is a curve $\beta$ in $\cR_h$ such that $\beta(t)$ is in $Q_K$ for all $t$ in $[0,1]$ and $\lambda_{1,p}(\beta) \ge \lambda_{1,p}(\alpha)$ for all $p \ge 2n-1$.
\end{Lemma}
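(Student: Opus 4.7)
The plan is to replace the portions of $\alpha$ lying outside $K$ by their images under the Euclidean inversion $\Phi(x,y) = \rho_0^2(x,y)/(x^2+y^2)$, regarded as a map of $\R^{2n}/G \setminus \{(0,0)\}$. Writing $\rho^2 = x^2+y^2$, the map $\Phi$ is an involution that preserves the quarter plane and its boundary, fixes the arc $\{\rho = \rho_0\}$, and exchanges the interior and exterior of $K$. A short calculation shows $\Phi$ is conformal with respect to $g$ with factor $(\rho_0/\rho)^2$, so $\Phi^* h = (\rho_0/\rho)^{8n-4} h$ and $F \circ \Phi = (\rho_0/\rho)^{4n-4} F$. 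In particular, $\Phi$ is a non-expansion in the $h$-metric on the closed exterior $\{\rho \ge \rho_0\}$.

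Then I set $\beta(t) = \Phi(\alpha(t))$ on $E = \{t \in [0,1] : \rho(\alpha(t)) > \rho_0\}$ and $\beta(t) = \alpha(t)$ elsewhere. The verification that $\beta \in \cR_h$ is routine: $\beta$ is continuous because $\Phi$ is the identity on the arc where $E$ meets its complement, and Lipschitz with the same constant as $\alpha$ because $\Phi$ is an $h$-non-expansion on the exterior of $K$. The condition $\beta(0) = (x_0,y_0)$ holds since $(x_0,y_0)$ lies on the fixed arc; $\beta(t) \in Q_0$ for $t < 1$ because $\Phi$ preserves the interior of $\R^{2n}/G$; and $\beta(1) = Q_B$ because $\Phi$ sends boundary points of $\R^{2n}/G$ to boundary points, all of which are identified in $Q$. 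By construction $\beta(t) \in Q_K$ for every $t$, since $\rho(\Phi(\alpha(t))) = \rho_0^2/\rho(\alpha(t)) < \rho_0$ on $E$.

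The main step is the Rayleigh quotient comparison. For any $w \in \Lip_0([0,1])$ and any $t \in E$, the transformation laws above give $|\beta'(t)|_h = (\rho_0/\rho)^{4n-2} |\alpha'(t)|_h$ and $F_\beta(t) = (\rho_0/\rho)^{4n-4} F_\alpha(t)$, so the numerator integrand of \eqref{rhrq} is multiplied pointwise by $(\rho_0/\rho)^{4n-2-2p}$ and the denominator integrand by $(\rho_0/\rho)^{4n-2}$, while off $E$ the two integrands coincide. The hypothesis $p \ge 2n-1$ is equivalent to $4n-2-2p \le 0$, and since $\rho_0/\rho < 1$ on $E$ this makes the first factor at least $1$ and the second at most $1$. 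Hence the Rayleigh quotient of $w$ on $\beta$ is no smaller than on $\alpha$, and taking the infimum over $w$ yields $\lambda_{1,p}(\beta) \ge \lambda_{1,p}(\alpha)$. The main obstacle is precisely the exponent bookkeeping in this step: the balance $4n - 2 - 2p \le 0$ is exactly where the hypothesis $p \ge 2n-1$ enters, and the scheme would fail for smaller $p$.
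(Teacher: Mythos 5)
Your proposal is correct and follows essentially the same approach as the paper: both replace the portion of $\alpha$ outside $K$ by its image under inversion through the circle $\rho = \rho_0$, and both verify the Rayleigh-quotient monotonicity via the same exponent bookkeeping, with $p \ge 2n-1$ entering exactly where you locate it ($4n-2-2p \le 0$, which is the paper's $2n-1-p \le 0$ rewritten in terms of $\rho_0/\rho$ rather than $r_\alpha/r_\beta$). Your presentation via the conformal factor $\Phi^* h = (\rho_0/\rho)^{8n-4} h$ is a slightly more conceptual route to the same pointwise comparison of $F^p/|\cdot'|_h^{p-1}$ and $|\cdot'|_h$ that the paper obtains by direct computation in polar coordinates.
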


\begin{proof}
There are functions $r_\alpha:[0,1) \to \R$ and $\theta_\alpha:[0,1) \to [0, \pi/2]$
such that for all $t$ in $[0,1)$,
\[
	\alpha(t) = \Big( r_\alpha(t) \cos \theta_\alpha(t), r_\alpha(t) \sin \theta_\alpha(t) \Big)
\]
Define a function $r_\beta:[0,1) \to [0,\rho_0]$ by
\[
	r_\beta(t) = \min \Big( \frac{\rho_0^2}{r_\alpha(t)}, r_\alpha(t) \Big)
\]
Then define a curve $\beta$ in $\cR_h$ so that $\beta(1)=Q_B$ and for all $t$ in $[0,1)$,
\[
	\beta(t) = \Big( r_\beta(t) \cos \theta_\alpha(t), r_\beta(t) \sin \theta_\alpha(t) \Big)
\]
Then $\beta$ is in $\cR_h$ and $\beta(t)$ is in $Q_K$ for all $t$ in $[0,1]$.
Let $F_\alpha=F \circ \alpha$ and $F_\beta=F \circ \beta$.
For all $p \ge 2n-1$ and for almost every $t$ in $[0,1]$,
\[
\label{inversionconf}
	\frac{(F_\alpha(t))^p}{|\alpha'(t)|_h^{p-1}} \le \frac{(F_\beta(t))^p}{|\beta'(t)|_h^{p-1}}
\]
To verify this, note that for all $t$ in $[0,1)$,
\[
	\frac{F_\alpha(t)}{F_\beta(t)} = \frac{r_\alpha^{2n-2}}{r_\beta^{2n-2}}
\]
Also, for almost every $t$ in $[0,1]$,
\[
	| \alpha'(t) |_h = \frac{r_\alpha^{2n-1}}{r_\beta^{2n-1}} \cdot | \beta'(t) |_h
\]
Therefore \eqref{inversionconf} follows, because $p \ge 2n-1$.
Also $| \alpha'(t)|_h \ge | \beta'(t)|_h$ for almost every $t$ in $[0,1]$.
Therefore $\lambda_{1,p}(\alpha) \le \lambda_{1,p}(\beta)$ for all $p \ge 2n-1$.
\end{proof}

We can now establish the existence of an eigenvalue maximizing curve in $\cR_h$.
For $p \ge 2n-1$, define
\[
	\Lambda_p = \sup \bigg\{ \lambda_{1,p}(\alpha) : \alpha \in \cR_h \bigg\}
\]

\begin{Lemma}
\label{hex}
Fix $p \ge 2n-1$.
There is a curve $\alpha$ in $\cR_h$ such that $\lambda_{1,p}(\alpha) = \Lambda_p$ and $\alpha(t)$ is in $Q_K$ for all $t$ in $[0,1]$.
\end{Lemma}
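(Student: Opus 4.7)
The plan is to extract a subsequential uniform limit of an eigenvalue-maximizing sequence and verify that the limit attains $\Lambda_p$. Observe first that $\Lambda_p > 0$, since any curve in $\cC$ gives a curve in $\cR_h$ with positive first eigenvalue, and $\Lambda_p < \infty$ by Lemma \ref{hLbound} together with the lower bound $L_h(\gamma) \ge d_h((x_0, y_0), Q_B) > 0$ for any $\gamma \in \cR_h$. Pick $\alpha_k \in \cR_h$ with $\lambda_{1,p}(\alpha_k) \to \Lambda_p$. By Lemma \ref{inversion} we may assume each $\alpha_k$ has image in $Q_K$, and by Lemma \ref{harc} we may reparametrize each proportionally to $h$-arc length, so that $|\alpha_k'|_h \equiv L_k := L_h(\alpha_k)$ almost everywhere, without decreasing $\lambda_{1,p}(\alpha_k)$. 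Lemma \ref{hLbound} then gives $L_k \le C_p/\lambda_{1,p}(\alpha_k)$, uniformly bounded for $k$ large enough that $\lambda_{1,p}(\alpha_k) \ge \Lambda_p/2$.

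The curves $\alpha_k : [0,1] \to Q$ are therefore uniformly $h$-Lipschitz into the compact subset $Q_K$, so by Arzel\`a-Ascoli a subsequence (still denoted $\alpha_k$) converges uniformly to an $L$-Lipschitz curve $\alpha^* : [0,1] \to Q_K$, where $L := \lim L_k > 0$, with $\alpha^*(0) = (x_0, y_0)$ and $\alpha^*(1) = Q_B$. Let $t_0 := \inf\{t \in [0,1] : \alpha^*(t) = Q_B\}$; restricting $\alpha^*$ to $[0, t_0]$ and reparametrizing by $h$-arc length onto $[0,1]$ yields a curve $\alpha \in \cR_h$ with image in $Q_K$. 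Since the Rayleigh quotient is reparametrization-invariant, it suffices to show $\lambda_{1,p}(\alpha^*|_{[0, t_0]}) \ge \Lambda_p$.

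Fix $w \in \Lip_0([0, t_0])$ and let $\tilde{w} \in \Lip_0([0,1])$ equal $w$ on $[0, t_0]$ and the constant $w(t_0)$ on $[t_0, 1]$. With $|\alpha_k'|_h \equiv L_k$, the Rayleigh quotient of $\alpha_k$ at $\tilde{w}$ simplifies to
\[
	\frac{1}{L_k^p} \cdot \frac{\int_0^{t_0} |w'|^p F_{\alpha_k}^p \, dt}{\int_0^{t_0} |w|^p \, dt + (1-t_0)|w(t_0)|^p}
\]
and is bounded below by $\lambda_{1,p}(\alpha_k) \to \Lambda_p$. Continuity of $F$ on $Q$ together with the uniform convergence $\alpha_k \to \alpha^*$ yields $F_{\alpha_k} \to F_{\alpha^*}$ uniformly, so dominated convergence passes to the limit and produces $\int_0^{t_0} |w'|^p F_{\alpha^*}^p \, dt \ge L^p \Lambda_p \int_0^{t_0} |w|^p \, dt$. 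Using the Lipschitz bound $|(\alpha^*)'|_h \le L$ almost everywhere, the Rayleigh quotient of $\alpha^*|_{[0, t_0]}$ at $w$ is estimated from below by $L^{-p} \int_0^{t_0} |w'|^p F_{\alpha^*}^p \, dt / \int_0^{t_0} |w|^p \, dt$, which is at least $\Lambda_p$. Taking the infimum over $w$ yields $\lambda_{1,p}(\alpha^*|_{[0, t_0]}) \ge \Lambda_p$, hence equality.

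The main obstacle is this passage to the limit: a priori the uniform limit $\alpha^*$ need not inherit the constant $h$-speed parametrization of the $\alpha_k$, only the Lipschitz bound $|(\alpha^*)'|_h \le L$. This suffices because in the Rayleigh quotient the speed appears with negative exponent $-(p-1)$ in the numerator and exponent $+1$ in the denominator, so the pointwise upper bound on $|(\alpha^*)'|_h$ weakens both integrals in the direction required. A related minor subtlety is the truncation at the first hitting time of $Q_B$, handled above by extending the test function by a constant past $t_0$.
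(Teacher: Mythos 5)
Your proof is correct and follows essentially the same route as the paper's: inversion to confine the maximizing sequence to $Q_K$, $h$-arc-length reparametrization, the uniform length bound from Lemma \ref{hLbound}, Arzel\`a--Ascoli, truncation at the first hitting time of $Q_B$ with constant extension of the test function, uniform convergence of $F$ on the compact set $Q_K$, and the observation that the Lipschitz limit satisfies $|(\alpha^*)'|_h \le L$, which is all that is needed since the speed enters the numerator with exponent $-(p-1)$ and the denominator with exponent $+1$. Two small notes: the claim that $F$ is continuous ``on $Q$'' should really be ``on $Q_K$,'' which is where the compactness justifying uniform convergence lives (and is all you use); and your explicit remark that $\Lambda_p > 0$ and that $L_h(\gamma) \ge d_h((x_0,y_0),Q_B) > 0$ makes precise why the lengths $L_k$ are bounded both above and away from zero, a point the paper leaves implicit.
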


\begin{proof}
Let $\{ \gamma_j \}$ be a sequence in $\cR_h$ such that
\[
	\lim_{j \to \infty} \lambda_{1,p}(\gamma_j) = \Lambda_p
\]
By Lemma \ref{inversion}, we may assume that $\gamma_j(t)$ is in $Q_K$ for every $j$ and every $t$ in $[0,1]$.
Using Lemma \ref{harc}, we may assume that $| \gamma_j'(t) |_h = L_h(\gamma_j)$ for every $j$ and almost every $t$ in $[0,1]$.
By Lemma \ref{hLbound}, the lengths $L_h(\gamma_j)$ are uniformly bounded.
By passing to a subsequence, we may assume that the lengths $L_h(\gamma_j)$ converge to some positive number $\ell$.
The curves $\gamma_j$ are uniformly Lipschitz.
Therefore, by applying the Arzela-Ascoli theorem and passing to a subsequence, we may assume that the curves $\gamma_j$ converge uniformly to a Lipschitz curve $\gamma:[0,1] \to Q_K$.
Moreover $| \gamma'(t) |_h \le \ell$ for almost every $t$ in $[0,b]$.
For each $j$, define $F_j = F \circ \gamma_j$.
Also define $F_\gamma = F \circ \gamma$.
Define $b$ in $(0,1]$ by
\[
	b= \min \Big\{ t \in [0,1] : \gamma(t) = Q_B \Big\}
\]
Let $w$ be in $\Lip_0([0,b])$.
Define $v$ to be a function in $\Lip_0([0,1])$ which agrees with $w$ over $[0,b]$ and is constant over $[0,1]$.
Then
\[
	\Lambda_p = \lim_{j \to \infty} \lambda_{1,p}(\gamma_j)
		\le \liminf_{j \to \infty} \frac{ \int_0^1 \frac{| v' |^p F_j^p}{L_h(\gamma_j)^{p-1}} \,dt}{\int_0^1 | v |^p L_h(\gamma_j) \,dt}
		\le \liminf_{j \to \infty} \frac{ \int_0^b \frac{| w' |^p F_j^p}{L_h(\gamma_j)^{p-1}} \,dt}{\int_0^b | w |^p L_h(\gamma_j) \,dt}
\]
Moreover $F_j$ converges to $F_\gamma$ uniformly over $[0,b]$, because $F$ is continuous on $Q_K$.
Also $L_h(\gamma_j)$ converges to $\ell$, so
\[
	\lim_{j \to \infty} \frac{ \int_0^b \frac{| w' |^p F_j^p}{L_h(\gamma_j)^{p-1}} \,dt}{\int_0^b | w |^p L_h(\gamma_j) \,dt}
		= \frac{ \int_0^b \frac{| w' |^p F_\gamma^p}{\ell^{p-1}} \,dt}{\int_0^b | w |^p \ell \,dt} \\
		\le \frac{ \int_0^b \frac{| w' |^p F_\gamma^p}{|\gamma'|_h^{p-1}} \,dt}{\int_0^b | w |^p |\gamma'|_h \,dt}
\]
Therefore
\[
	\Lambda_p \le \frac{ \int_0^b \frac{| w' |^p F_\gamma^p}{|\gamma'|_h^{p-1}} \,dt}{\int_0^b | w |^p |\gamma'|_h \,dt}
\]
Since $w$ is arbitrary,
\[
	\Lambda_p \le \lambda_{1,p}\Big( \gamma \big|_{[0,b]} \Big)
\]
Let $\alpha$ be the reparametrization of $\gamma|_{[0,b]}$ given by Lemma \ref{harc}.
Then $\alpha$ is in $\cR_h$ and $\alpha(t)$ is in $Q_K$ for all $t$ in $[0,1]$.
Moreover
\[
	\lambda_{1,p}(\alpha) \ge \lambda_{1,p}(\gamma|_{[0,b]}) \ge \Lambda_p
\]
Therfore $\lambda_{1,p}(\alpha) = \Lambda_p$.
\end{proof}

Let $\cR_h^+$ be the set of continuous curves $\alpha:[0,1] \to \R^{2n}/G$ such that composition with the quotient map $\R^{2n}/G \to Q$ yields a curve in $\cR_h$.
We use \eqref{rhrq} to define $\lambda_{1,p}(\alpha)$ for $\alpha$ in $\cR_h^+$.
In the next lemma we establish existence of an eigenvalue maximizing curve in $\cR_h^+$.
We first introduce new coordinates functions on $\R^{2n}/G$.
Define $u: \R^{2n}/G \to \R$ and $v: \R^{2n}/G \to [0, \infty)$ by
\[
\label{ucoord}
	u(x,y) = \frac{1}{2} (x^2-y^2)
\]
and
\[
\label{vcoord}
	v(x,y) = xy
\]
These coordinates can be used to identify $\R^{2n}/G$ with a half-plane.
A feature of these coordinates is that the function $F$ can be expressed as $F= c_n \cdot v^{n-1}$.
Define a function $r= \sqrt{u^2+v^2}$.
Then the metric $h$ can be expressed as
\[
	h = \frac{c_n^2 \cdot v^{2n-2}}{2r} \Big( du^2 + dv^2 \Big)
\]
By \eqref{x0y0}, we have
\[
\label{u0}
	u(x_0,y_0) \ge 0
\]

\begin{Lemma}
\label{umono}
Fix $p \ge 2n-1$.
There is a curve $\alpha$ in $\cR_h^+$ such that $\lambda_{1,p}(\alpha) = \Lambda_p$.
Moreover $u \circ \alpha$ is monotonically increasing over $[0,1]$.
\end{Lemma}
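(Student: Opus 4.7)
The plan is to take the maximizer from Lemma \ref{hex} and rearrange it in $(u,v)$-coordinates so that $u$ becomes monotonically increasing without changing the eigenvalue. First I would apply Lemma \ref{hex} to produce a maximizer $\alpha^* \in \cR_h$ contained in $Q_K$, reparametrized by $h$-arc length on $[0, L]$ with $L = L_h(\alpha^*)$. Since the quotient $\R^{2n}/G \to Q$ is a homeomorphism on interiors, $\alpha^*|_{[0, L)}$ has a unique continuous lift to $\R^{2n}/G$; extending to $s = L$ with a boundary endpoint is achieved by passing to a further subsequence in the proof of Lemma \ref{hex} to ensure uniform convergence in the $\R^{2n}/G$-topology. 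In the $h$-arc-length parametrization, the Rayleigh quotient becomes $\int_0^L |w'|^p F^p \, ds / \int_0^L |w|^p \, ds$ with $F = c_n v^{n-1}$, so the eigenvalue depends only on the scalar profile $\phi(s) := v(\alpha^*(s))$ and not on $u(\alpha^*(s))$. This is the crucial flexibility I exploit.

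I would then construct a competitor $\tilde\alpha(s) = (\tilde u(s), \phi(s))$ with the same $\phi$-profile but with $\tilde u$ monotonically nondecreasing. Imposing $|\tilde\alpha'|_h = 1$ and choosing the nonnegative root gives the ODE
\[
\tilde u'(s) = \sqrt{\frac{2\sqrt{\tilde u(s)^2 + \phi(s)^2}}{c_n^2 \phi(s)^{2n-2}} - \phi'(s)^2}, \qquad \tilde u(0) = u_0.
\]
Writing $u^*(s) = u(\alpha^*(s))$, a continuous induction argument shows $\tilde u(s) \ge |u^*(s)|$ for every $s \in [0, L]$. The base case $\tilde u(0) = u_0 = |u^*(0)|$ holds by \eqref{u0}, and whenever $\tilde u \ge |u^*|$ the right-hand side above dominates $(u^{*\prime})^2 \ge 0$, so the radicand stays nonnegative and $\tilde u' \ge |u^{*\prime}|$, preserving the inequality. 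This simultaneously establishes local well-posedness of the ODE.

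The main obstacle is showing $\tilde u(L) < \infty$, so that $\tilde\alpha$ extends continuously to a point on the $\{y=0\}$ axis. The arc-length constraint on $\alpha^*$ gives $\phi^{n-1}|\phi'| \le \sqrt{2r}/c_n \le \rho_0/c_n$ on $Q_K$, so $\phi^n$ is Lipschitz with $\phi(L) = 0$, hence $\phi(s) \le C(L-s)^{1/n}$ near $s = L$. A matching lower bound $\phi(s) \gtrsim (L-s)^{1/n}$ follows from the maximizing property of $\alpha^*$: were $\phi$ to decay strictly faster, the tail could be truncated and replaced by a short transverse path to the boundary, strictly increasing $\lambda_{1,p}$ and contradicting maximality. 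Then $1/\phi^{n-1} \lesssim (L-s)^{-(n-1)/n}$ is integrable on $[0, L]$ since $(n-1)/n < 1$, and a Gr\"onwall estimate on the ODE yields $\tilde u(L) < \infty$. Since $v(\tilde\alpha(s)) = \phi(s) = v(\alpha^*(s))$ pointwise and both curves have $h$-length $L$, their Rayleigh quotients coincide, giving $\lambda_{1,p}(\tilde\alpha) = \Lambda_p$; reparametrizing $\tilde\alpha$ on $[0,1]$ produces the desired element of $\cR_h^+$ with $u \circ \tilde\alpha$ monotonically nondecreasing.
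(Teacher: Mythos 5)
Your central idea---fix the $v$-profile and rearrange $u$ to be monotone---is exactly the paper's idea, but your implementation is considerably more complicated than necessary and, as written, has a genuine gap.

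The paper's proof never imposes the constraint $|\tilde\alpha'|_h=1$. Instead, starting from the maximizer $\beta$ of Lemma~\ref{hex}, it sets $u_\alpha(t)=\sup\{|u_\beta(s)|: s\in[0,t)\}$ and $v_\alpha=v_\beta$. The running supremum is monotone, continuous, bounded, and satisfies $|u_\alpha'|\le|u_\beta'|$ a.e.; together with $u_\alpha\ge|u_\beta|$ (hence $r_\alpha\ge r_\beta$, which makes the conformal factor $v^{2n-2}/(2r)$ smaller), one gets $|\alpha'(t)|_h\le|\beta'(t)|_h$ a.e.\ while $F_\alpha=F_\beta$. Since the Rayleigh quotient in \eqref{rhrq} is pointwise monotone decreasing in $|\alpha'|_h$, this immediately gives $\lambda_{1,p}(\alpha)\ge\lambda_{1,p}(\beta)=\Lambda_p$ with no ODE and no analysis near $s=L$.

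By insisting on $h$-arc-length, you are forced to solve the ODE for $\tilde u$, and the finiteness of $\tilde u(L)$ hinges on the claimed lower bound $\phi(s)\gtrsim(L-s)^{1/n}$, equivalently on the integrability of $\phi^{1-n}$ over $[0,L]$. The matching \emph{upper} bound $\phi(s)\le C(L-s)^{1/n}$ does follow from the arc-length constraint, but the lower bound is a nontrivial assertion about the structure of maximizers, and the ``truncate the tail and replace by a transverse path'' justification is not a proof. Without that bound the ODE can blow up before $s=L$ and $\tilde\alpha$ never reaches the boundary, so the argument fails. (There are also smaller issues: $u^*$ and $\phi$ are only differentiable a.e.\ in the $h$-arc-length variable, so the ``continuous induction'' comparison $\tilde u\ge|u^*|$ needs a careful comparison lemma for a.e.-differentiable functions, and one must check $\tilde\alpha$ is Lipschitz into $Q$.) All of these difficulties are artifacts of the unnecessary constraint $|\tilde\alpha'|_h=1$; dropping it, as the paper does, eliminates them.
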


\begin{proof}
By Lemma \ref{hex}, there is a curve $\beta$ in $\cR_h$ such that $\lambda_{1,p}(\beta) = \Lambda_p$ and $\beta(t)$ is in $Q_K$ for all $t$ in $[0,1]$.
Define functions $u_\beta:[0,1) \to \R$ and $v_\beta:[0,1) \to [0,\infty)$ by $u_\beta = u \circ \beta$ and $v_\beta=v \circ \beta$.
Note that $u_\beta$ is bounded over $[0,1)$.
Also $v(t)>0$ for all $t$ in $[0,1)$ and
\[
	\lim_{t \to 1} v_\beta(t) = 0
\]
In particular $v_\beta$ has a continuous extension to $[0,1]$.
Let $v_\alpha:[0,1] \to [0,\infty)$ be this extension.
Note that $u_\beta(0) \ge 0$ by \eqref{u0}, and define $u_\alpha : [0,1] \to \R$ by
\[
	u_\alpha(t) = \sup \bigg\{ |u_\beta(s)| : s \in [0,t) \bigg\}
\]
Then $u_\alpha$ is monotonically increasing, continuous, and bounded.
Define a continuous curve $\alpha:[0,1] \to \R^{2n}/G$ so that $u \circ \alpha = u_\alpha$ and $v \circ \alpha = v_\alpha$.
Then $\alpha$ is in $\cR_h^+$, and $u \circ \alpha$ is monotonically increasing over $[0,1]$.
Let $F_\alpha = F \circ \alpha$ and $F_\beta = F \circ \beta$.
Note that $F_\alpha=F_\beta$ over $[0,1]$, and $| \alpha'(t) |_h \le | \beta'(t) |_h$ for almost every $t$ in $[0,1]$.
Therefore $\lambda_{1,p}(\alpha) \ge \lambda_{1,p}(\beta)$, hence $\lambda_{1,p}(\alpha)=\Lambda_p$.
\end{proof}

In the following lemma, we establish existence of a maximizing curve $\alpha$ in $\cR_h^+$ such that $u \circ \alpha$ is monotonically increasing and $r \circ \alpha$ is monotonically decreasing.

\begin{Lemma}
\label{rumono}
Fix $p \ge 2n-1$.
There is a curve $\alpha$ in $\cR_h^+$ such that $\lambda_{1,p}(\alpha)=\Lambda_p$.
Moreover $u \circ \alpha$ is monotonically increasing over $[0,1]$ and $r \circ \alpha$ is monotonically decreasing over $[0,1]$.
\end{Lemma}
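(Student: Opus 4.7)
My plan is to take the curve $\alpha$ furnished by Lemma~\ref{umono}, satisfying $\lambda_{1,p}(\alpha)=\Lambda_p$ with $u\circ\alpha$ monotonically increasing, and construct a modification $\tilde\alpha\in\cR_h^+$ with $u\circ\tilde\alpha$ monotonically increasing, $r\circ\tilde\alpha$ monotonically decreasing, and $\lambda_{1,p}(\tilde\alpha)\ge\lambda_{1,p}(\alpha)=\Lambda_p$; since $\Lambda_p$ is the supremum, this forces $\lambda_{1,p}(\tilde\alpha)=\Lambda_p$. By the argument of Lemma~\ref{inversion}, I may further assume $\alpha$ lies in $Q_K$, so that $r\circ\alpha\le r_0$ along the curve and $r(\alpha(0))=r_0$.

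The rearrangement parallels Lemma~\ref{umono} but acts on $r$ instead of $u$. Define the non-increasing upper envelope
\[
	\rho(t) = \sup\{\, r(\alpha(s)):s\in[t,1]\,\},
\]
a continuous, monotonically decreasing function with $\rho\ge r\circ\alpha$, $\rho(0)=r_0$, and $\rho(1)=u(\alpha(1))$. Define $\tilde\alpha:[0,1]\to\R^{2n}/G$ in $(u,v)$-coordinates by $u\circ\tilde\alpha=u\circ\alpha$ and $v(\tilde\alpha(t))=\sqrt{\rho(t)^2-u(\alpha(t))^2}$, where the radicand is nonnegative since $\rho(t)\ge r(\alpha(t))\ge u(\alpha(t))\ge 0$. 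Then $\tilde\alpha$ is continuous, starts at $(u_0,v_0)$, satisfies $v(\tilde\alpha(1))=0$ so that it descends via the quotient to a curve in $\cR_h$, has $u\circ\tilde\alpha=u\circ\alpha$ monotonically increasing, and has $r\circ\tilde\alpha=\rho$ monotonically decreasing.

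For the eigenvalue comparison, decompose $[0,1]$ into the envelope set $E=\{t:\rho(t)=r(\alpha(t))\}$, on which $\tilde\alpha=\alpha$, and its complement, whose connected components are open intervals on each of which $\rho$ equals some constant $C$ with $r(\alpha(t_i))=C$ at the endpoints while $r\circ\alpha<C$ in the interior; there $\tilde\alpha$ traces an arc of the circle $r=C$ between two points of $\alpha$, while $\alpha$ takes an interior path in $\{r\le C\}$ with the same endpoints. Since $v\circ\tilde\alpha\ge v\circ\alpha$ everywhere, $F_{\tilde\alpha}\ge F_\alpha$ throughout. To obtain $\lambda_{1,p}(\tilde\alpha)\ge\lambda_{1,p}(\alpha)$ it suffices to show that for each test function $w\in\Lip_0([0,1])$ the integrated contribution of $\tilde\alpha$ to the numerator of the Rayleigh quotient on each flat component is at least that of $\alpha$ and the integrated contribution to the denominator is at most that of $\alpha$; on the envelope set the integrands are identical.

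The main obstacle will be the integrated comparison on each flat component, because the pointwise inequality $|\tilde\alpha'|_h\le|\alpha'|_h$ does not hold in general: the conformal factor $c_n^2 v^{2n-2}/(2r)$ of $h$ changes in opposing directions under the modification (larger via $v^{2n-2}$, smaller via $r^{-1}$). The argument will exploit the shared endpoints of $\tilde\alpha$ and $\alpha$ on each flat component, the fact that $\tilde\alpha$ is a circular arc on which $v$ is a prescribed function of $u$, and the hypothesis $p\ge 2n-1$ to balance the powers of $v$ appearing in $F^p$ and in $|\alpha'|_h$; a convexity or Jensen-type estimate will then show that the boost $F_{\tilde\alpha}\ge F_\alpha$ is enough to compensate for any increase in $|\tilde\alpha'|_h$ integrated over the interval. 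Once this flat-interval comparison is in place, summing the contributions over all components and infimizing over $w$ yields the desired $\lambda_{1,p}(\tilde\alpha)\ge\lambda_{1,p}(\alpha)$, completing the proof.
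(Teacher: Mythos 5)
Your construction is genuinely different from the paper's, and it contains a gap that I do not think can be filled in the way you sketch.

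The paper's proof takes the curve $\gamma$ from Lemma~\ref{umono}, passes to polar coordinates $(r_\gamma,\theta_\gamma)$ in the $(u,v)$-plane, and replaces $r_\gamma$ by its running \emph{minimum} $r_\beta(t)=\min\{r_\gamma(s):s\in[0,t]\}$ while keeping $\theta_\gamma$ fixed; a second step then adjusts $\theta$ on the flat intervals to restore monotonicity of $u$. The crucial point is that this is a \emph{radial contraction} in the $(u,v)$-plane: on the set where $r_\beta<r_\gamma$ one has, pointwise, $F_\beta=(r_\beta/r_\gamma)^{n-1}F_\gamma$ and $|\beta'|_h\le(r_\beta/r_\gamma)^{n-\frac12}|\gamma'|_h$, and the exponents combine so that $F_\beta^p/|\beta'|_h^{p-1}\ge F_\gamma^p/|\gamma'|_h^{p-1}$ and $|\beta'|_h\le|\gamma'|_h$ \emph{both} hold pointwise precisely when $p\ge 2n-1$ — mirroring the computation already used in Lemma~\ref{inversion}. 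Both the numerator and denominator of the Rayleigh quotient move in the favorable direction for every test function, with no integral argument needed.

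Your rearrangement goes the opposite way: you fix $u\circ\alpha$ and push $v$ (hence $r$) \emph{up} to the right-anchored upper envelope $\rho$. This destroys the scaling structure the paper relies on, and it moves one half of the Rayleigh quotient the wrong way. Writing $|\alpha'|_h=F_\alpha\,|\alpha'|_g$, the denominator is $\int_0^1|w|^p F_\alpha\,|\alpha'|_g\,dt$. Your modification makes $F_{\tilde\alpha}\ge F_\alpha$, so unless $|\tilde\alpha'|_g$ decreases enough to overcome that growth, the denominator \emph{increases}, which lowers the Rayleigh quotient rather than raising it. There is no pointwise reason for $|\tilde\alpha'|_g$ to shrink: on a flat component $\tilde\alpha$ traces a circular arc of radius $C=\rho$, while $\alpha$ traces an arbitrary path inside the disc $\{r\le C\}$ with the same endpoints and the same $u$-schedule, and the arc can easily have larger Euclidean speed than the chord-like parts of $\alpha$. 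You acknowledge this obstacle yourself and defer it to ``a convexity or Jensen-type estimate,'' but no such estimate is produced, and I do not see one that would work: the tension between the denominator wanting $F$ small and the numerator wanting $F$ large is resolved in the paper precisely because the contraction shrinks $F$ \emph{and} shrinks $|\cdot|_h$ by a larger power, whereas your expansion grows $F$ and, generically, grows $|\cdot|_h$.

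There is also a secondary issue your plan does not address: after modifying $r$, your curve automatically keeps $u$ monotone (since you do not touch $u$), but the paper's construction does not, which is why the paper needs the additional $\theta_\alpha$ step and the set-$Z$/constant-interval argument. If you insist on fixing $u$ and moving $v$, you would avoid that second step, but you would also be unable to reuse the conformal-scaling inequality that makes the first step go through. I would recommend abandoning the outward envelope and instead contracting $r$ along rays through the origin, as in Lemma~\ref{inversion} and the paper's proof.
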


\begin{proof}
By Lemma \ref{umono}, there is a curve $\gamma$ in $\cR_h^+$ such that $\lambda_{1,p}(\gamma) = \Lambda_p$.
Moreover $u \circ \gamma$ is monotonically increasing over $[0,1]$.
Define $r_\gamma = r \circ \gamma$.
Note that $r_\gamma$ is non-vanishing over $[0,1)$.
There is a function $\theta_\gamma:[0,1] \to [0,\pi/2]$ such that
\[
	\Big( u \circ \gamma, v \circ \gamma \Big) = \Big( r_\gamma \cos \theta_\gamma, r_\gamma \sin \theta_\gamma \Big)
\]
Define a function $r_\beta:[0,1] \to (0,\infty)$ by
\[
	r_\beta(t) = \min \Big\{ r_\gamma(s) : s \in [0,t] \Big\}
\]
Define a curve $\beta:[0,1] \to \R^{2n}/G$ such that
\[
	\Big( u \circ \beta, v \circ \beta \Big) = \Big( r_\beta \cos \theta_\gamma, r_\beta \sin \theta_\gamma \Big)
\]
Note that $\beta$ is in $\cR_h^+$ and $r_\beta$ is monotonically decreasing over $[0,1]$.
Define a set $W$ by
\[
	W = \Big\{ t \in [0,1] : \beta(t) = \gamma(t) \Big\}
\]
The isolated points of $W$ are countable, so $\beta'(t)=\gamma'(t)$ for almost every $t$ in $W$.
Note that there are countably many disjoint intervals $(a_1,b_1), (a_2, b_2), \ldots$ such that
\[
	[0,1) \setminus W = \bigcup_{j} (a_j, b_j)
\]
Moreover $r_\beta$ is constant on each interval $(a_j, b_j)$.
For all $p \ge 2n-1$ and for almost every $t$ in $[0,1]$,
\[
\label{rumonoconf}
	\frac{(F_\beta(t))^p}{|\beta'(t)|_h^{p-1}} \ge \frac{(F_\gamma(t))^p}{|\gamma'(t)|_h^{p-1}}
\]
To verify this, note that for all $t$ in $[0,1)$,
\[
	\frac{F_\gamma(t)}{F_\beta(t)} = \frac{r_\gamma^{2n-2}}{r_\beta^{2n-2}}
\]
Also, for almost every $t$ in $[0,1]$,
\[
	| \gamma' |_h \ge \frac{r_\gamma^{2n-1}}{r_\beta^{2n-1}} \cdot | \beta' |_h
\]
Therefore \eqref{rumonoconf} follows, because $p \ge 2n-1$.
Also $| \beta'(t)|_h \le | \gamma'(t)|_h$ for almost every $t$ in $[0,1]$.
Therefore $\lambda_{1,p}(\beta) \ge \lambda_{1,p}(\gamma)$, so $\lambda_{1,p}(\beta)=\Lambda_p$.
Define $\theta_\alpha:[0,1] \to [0, \pi/2]$ by
\[
	\theta_\alpha(t) =
	\begin{cases}
		\theta_\gamma(t) & t \in W \\
		\min \Big\{ \theta_\gamma(s) : s \in [a_j,t] \Big\} & t \in (a_j,b_j) \\
	\end{cases}
\]
The main difficulty in proving $\theta_\alpha$ is Lipschitz continuous is to verify that for each point $b_j$,
\[
\label{bcont}
	\min \Big\{ \theta_\gamma(s) : s \in [a_j,b_j] \Big\} = \theta_\gamma(b_j)
\]
To prove this, fix $s$ in $[a_j, b_j]$
Then
\[
	r_\gamma(b_j) = r_\beta(b_j) = r_\beta(s) \le r_\gamma(s)
\]
Also $u \circ \gamma$ is monotonically increasing, so $u \circ \gamma(s) \le u \circ \gamma(b_j)$, i.e.
\[
	r_\gamma(s) \cos \theta_\gamma(s) \le r_\gamma(b_j) \cos \theta_\gamma(b_j)
\]
It follows that $\theta_\gamma(s) \ge \theta_\gamma(b_j)$, which proves \eqref{bcont}.

Define a curve $\alpha$ in $\cR_h^+$ such that
\[
	\Big( u \circ \alpha, v \circ \alpha \Big) = \Big( r_\beta \cos \theta_\alpha, r_\beta \sin \theta_\alpha \Big)
\]
Note that $r \circ \alpha = r_\beta$ is monotonically decreasing over $[0,1]$.
Additionally $u \circ \alpha$ is monotonically increasing over $W$, because $\alpha = \gamma$ over $W$.
Also $u \circ \alpha$ is monotonically increasing over each interval $(a_j,b_j)$, because $r_\beta$ is constant and $\theta_\alpha$ is monotonically decreasing over each of these intervals.
Therefore $u \circ \alpha$ is monotonically increasing over $[0,1]$.
In order to show that $\lambda_{1,p}(\alpha) \ge \lambda_{1,p}(\beta)$, define a set $Z$ by
\[
	Z = \Big\{ t \in [0,1] : \alpha(t) = \beta(t) \Big\}
\]
Note that $W$ is contained in $Z$, and there are countably many disjoint intervals $(c_1,d_1), (c_2, d_2), \ldots$ such that
\[
	[0,1) \setminus Z = \bigcup_{j} (c_j, d_j)
\]
Moreover $\theta_\alpha$ and $r_\beta$ are constant on each interval $(c_j, d_j)$.
That is, $\alpha$ is constant on each interval $(c_j,d_j)$.
Let $w$ be a function in $\Lip_0([0,1])$ such that
\[
	\frac{ \int_0^1 \frac{| w' |^p F_\alpha^p}{|\alpha'|_h^{p-1}} \,dt}{\int_0^1 | w |^p |\alpha'|_h \,dt} < \infty
\]
In particular $w$ is constant on each interval $(c_j,d_j)$.
Additionally, the isolated points of $Z$ are countable, so $\alpha'(t)=\beta'(t)$ for almost every $t$ in $Z$.
Therefore
\[
	\lambda_{1,p}(\beta) \le \frac{ \int_0^1 \frac{| w' |^p F_\beta^p}{|\beta'|_h^{p-1}} \,dt}{\int_0^1 | w |^p |\beta'|_h \,dt}
		\le \frac{ \int_Z \frac{| w' |^p F_\beta^p}{|\beta'|_h^{p-1}} \,dt}{\int_Z | w |^p |\beta'|_h \,dt}
		= \frac{ \int_0^1 \frac{| w' |^p F_\alpha^p}{|\alpha'|_h^{p-1}} \,dt}{\int_0^1 | w |^p |\alpha'|_h \,dt}
\]
Since $w$ is arbitrary, we have $\lambda_{1,p}(\alpha) \ge \lambda_{1,p}(\beta)$.
Therefore $\lambda_{1,p}(\alpha)=\Lambda_p$.
\end{proof}

Recall $g$ is the orbital distance metric on $\R^{2n}/G$, i.e. $g= dx^2 + dy^2$.
The metric $g$ can also be expressed as
\[
\label{guv}
	g = \frac{1}{2r} \Big( du^2 + dv^2 \Big)
\]
We view $\R^{2n}/G$ as a metric space, with distance function induced by $g$.
Let $\cR_g$ be the set of Lipschitz curves $\alpha:[0,1] \to \R^{2n}/G$ such that $\alpha(0)=(x_0,y_0)$ and $\alpha(1)$ is in the boundary of $\R^{2n}/G$ and $\alpha(t)$ in the interior of $\R^{2n}/G$ for every $t$ in $[0,1)$.
Note that $\cR_g$ is a subset of $\cR_h^+$.
If $\alpha$ is in $\cR_g$ and $F_\alpha=F \circ \alpha$, then
\[
\label{rgrq}
	\lambda_{1,p}(\alpha) = \inf \left \{ \frac{ \int_0^1 \frac{|w'|^p F_\alpha}{|\alpha '|_g^{p-1}} \,dt}{ \int_0^1 |w|^p F_\alpha | \alpha' |_g \,dt} : w \in \Lip_0([0,1]) \right \}
\]

The previous lemma can be used to establish existence of an eigenvalue maximizing curve in $\cR_h^+$ which has finite length with respect to $g$.
The following lemma shows that reparametrization then yields a curve in $\cR_g$.
The statement and proof are very similar to Lemma \ref{harc}, with the metric $g$ in place of the metric $h$.
For a curve $\alpha$ in $\cR_h^+$, let $L_g(\alpha)$ denote the length of $\alpha$ with respect to $g$.

\begin{Lemma}
\label{garc}
Let $\beta$ be a curve in $\cR_h^+$ and assume that $L_g(\beta)$ is finite.
Define $\ell_g:[0,1] \to [0,1]$ by
\[
	\ell_g(t) = \frac{1}{L_g(\beta)} \int_0^t | \beta'(u)|_g \,du
\]
There is a curve $\alpha$ in $\cR_g$ such that $\alpha(\ell_g(t))=\beta(t)$ for all $t$ in $[0,1]$, and $| \alpha'(t) |_g = L_g(\alpha)$ for almost every $t$ in $[0,1]$.
Also $\lambda_{1,p}(\alpha) \ge \lambda_{1,p}(\beta)$ for all $p \ge 2$.
\end{Lemma}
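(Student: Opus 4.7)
The plan is to mirror the proof of Lemma \ref{harc}, with the metric $g$ in place of $h$. First, define $\eta:[0,1] \to [0,1]$ by $\eta(s) = \min\{ t \in [0,1] : \ell_g(t) = s \}$. Since $L_g(\beta)$ is finite, the function $\ell_g$ is continuous and non-decreasing from $0$ to $1$, so $\eta$ is well defined. Although $\eta$ may not be continuous, the composition $\alpha := \beta \circ \eta$ is continuous, because on any interval where $\ell_g$ is constant (which is precisely where $\eta$ can jump) one has $|\beta'|_g = 0$ almost everywhere, and hence $\beta$ is constant there. By construction $|\alpha'(s)|_g = L_g(\beta)$ for almost every $s$, so $\alpha$ is Lipschitz and $L_g(\alpha) = L_g(\beta)$.

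Next I will verify that $\alpha$ belongs to $\cR_g$. Evidently $\alpha(0) = \beta(0) = (x_0,y_0)$. To see $\alpha(1)$ lies on the boundary of $\R^{2n}/G$, suppose instead $\eta(1) < 1$. Then $\ell_g$ is constant on $[\eta(1),1]$, so $\beta$ is constant there by continuity; but then $\beta(\eta(1)) = \beta(1)$ would be a boundary point with $\eta(1) < 1$, contradicting that $\beta(t)$ lies in the interior of $\R^{2n}/G$ for every $t \in [0,1)$. Hence $\eta(1) = 1$ and $\alpha(1) = \beta(1)$ is a boundary point, while the same argument shows $\eta(s) < 1$ for all $s < 1$, so $\alpha(s)$ lies in the interior.

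For the eigenvalue inequality, the key observation is that $h = F^2 g$, which gives $|\beta'(t)|_h = F_\beta(t) \cdot |\beta'(t)|_g$ almost everywhere. Substituting this into \eqref{rhrq} converts the Rayleigh quotient for $\beta$ into the $g$-form of \eqref{rgrq}, with $F_\beta$ and $|\beta'|_g$ in place of $F_\alpha$ and $|\alpha'|_g$. Given $w \in \Lip_0([0,1])$, set $v = w \circ \ell_g$, so that $v \in \Lip_0([0,1])$ and $v'(t) = w'(\ell_g(t)) |\beta'(t)|_g / L_g(\beta)$ almost everywhere. A change of variables $s = \ell_g(t)$, together with $|\alpha'|_g = L_g(\beta) = L_g(\alpha)$ almost everywhere, shows that the Rayleigh quotient of $w$ against $\alpha$ equals the Rayleigh quotient of $v$ against $\beta$, which is at least $\lambda_{1,p}(\beta)$. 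Taking the infimum over $w$ then yields $\lambda_{1,p}(\alpha) \ge \lambda_{1,p}(\beta)$.

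The main technical obstacle will be bookkeeping the change of variables when $\ell_g$ has plateaus. Because on such plateaus $|\beta'|_g$ vanishes almost everywhere, the contributions to both the numerator and denominator of the Rayleigh quotient from those intervals are zero, once the $0/0$ convention stated after \eqref{rhrq} is invoked for the numerator; on the complement of the plateaus, $\ell_g$ is locally strictly increasing and the substitution is a routine monotone change of variables that delivers the desired equality of Rayleigh quotients.
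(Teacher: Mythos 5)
Your proof is correct and mirrors the paper's argument: same definition of $\eta$ via a minimal preimage, same observation that $\beta$ is constant on plateaus of $\ell_g$ (so $\alpha = \beta\circ\eta$ is continuous despite $\eta$ possibly jumping), and the same change of variables $v = w\circ\ell_g$ to compare Rayleigh quotients. The extra details you supply — explicitly verifying $\eta(1)=1$ so that $\alpha(1)$ is a boundary point, and spelling out the identity $|\beta'|_h = F_\beta\,|\beta'|_g$ that converts \eqref{rhrq} into \eqref{rgrq} — are steps the paper leaves implicit but takes for granted, not a different route.
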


\begin{proof}
First note that $\beta$ is locally Lipschitz over $[0,1)$ and continuous over $[0,1]$.
Define $\eta:[0,1] \to \R$ by
\[
	\eta(s) = \min \Big\{ t \in [0,1] : \ell_g(t)=s \Big\}
\]
Define $\alpha:[0,1] \to \R^{2n}/G$ by $\alpha= \beta \circ \eta$.
Note that $\eta$ may not be continuous, but $\alpha$ is locally Lipschitz over $[0,1)$ and continuous over $[0,1]$.
Moreover $\alpha(\ell_g(t)) = \beta(t)$ for all $t$ in $[0,1]$.
Therefore $| \alpha'(t) |_g = L_g(\alpha)$ for almost every $t$ in $[0,1]$.
In particular $\alpha$ is in $\cR_g$.
Let $F_\beta = F \circ \beta$ and $F_\alpha = F \circ \alpha$.
Let $w$ be in $\Lip_0([0,1])$.
Define $v = w \circ \ell_g$.
Then $v$ is in $\Lip_0([0,1])$, and changing variables yields
\[
	\lambda_{1,p}(\beta) \le \frac{ \int_0^1 \frac{| v' |^p F_\beta}{|\beta'|_g^{p-1}} \,dt}{\int_0^1 | v |^p F_\beta |\beta'|_g \,dt}
		= \frac{ \int_0^1 \frac{| w' |^p F_\alpha}{|\alpha'|_g^{p-1}} \,dt}{\int_0^1 | w |^p F_\alpha |\alpha'|_g \,dt}
\]
Since $w$ is arbitrary, this implies that $\lambda_{1,p}(\beta) \le \lambda_{1,p}(\alpha)$.
\end{proof}

Let $\cR_g^*$ be the set of curves $\alpha$ in $\cR_g$ which satisfy the following properties.
First $\alpha$ is simple and $|\alpha'(t)|_g=L_g(\alpha)$ for almost every $t$ in $[0,1]$.
Second $u \circ \alpha(1) >0$.
Third there is a constant $c>0$ such that for all $t$ in $[0,1]$,
\[
\label{rgtrans}
	v \circ \alpha(t) \ge c (1-t)
\]
We can now establish existence of an eigenvalue maximizing curve in $\cR_g^*$.

\begin{Lemma}
\label{gex}
Fix $p \ge 2n-1$.
Assume either $x_0 \neq y_0$ or $\Lambda_p > \lambda_{1,p}(\sigma)$.
Then there is a curve $\alpha$ in $\cR_g^*$ such that $\lambda_{1,p}(\alpha) = \Lambda_p$.
\end{Lemma}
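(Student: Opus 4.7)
The strategy is to lift the maximizer from Lemma \ref{rumono} up to $\cR_g^*$. Let $\beta \in \cR_h^+$ be the curve given by that lemma, so $\lambda_{1,p}(\beta) = \Lambda_p$, with $u \circ \beta$ weakly increasing and $r \circ \beta$ weakly decreasing. The hypothesis enters only in showing $u^* := u(\beta(1)) > 0$: if $x_0 \neq y_0$, then $u(x_0,y_0) > 0$ by \eqref{u0} and monotonicity gives $u^* \ge u(x_0,y_0) > 0$; if $x_0 = y_0$ and $u^* = 0$, then monotonicity forces $u \circ \beta \equiv 0$, so $\beta$ traces a subset of the line $\{u = 0\}$ corresponding to Simons' cone, giving $\lambda_{1,p}(\beta) = \lambda_{1,p}(\sigma)$ and contradicting $\Lambda_p > \lambda_{1,p}(\sigma)$.

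Next I would show $L_g(\beta) < \infty$ and then reparametrize. The identities $x^2 = r+u$ and $y^2 = r-u$ show that $y$ is monotone decreasing, so its total variation is at most $y_0$. The bounds $r(t) \ge r(1) = u^*$ and $u(t) \ge u(0) \ge 0$ give $x(t) \ge \sqrt{u^*} > 0$; since $x^2$ is the sum of a monotone decreasing and a monotone increasing function with combined oscillation at most $2r_0$, its total variation is at most $2r_0$, and dividing by $2\sqrt{u^*}$ yields finite total variation of $x$. Hence $L_g(\beta) \le \mathrm{TV}(x) + \mathrm{TV}(y) < \infty$. Lemma \ref{garc} then produces $\alpha \in \cR_g$ with $|\alpha'|_g = L_g(\alpha)$ almost everywhere, $\lambda_{1,p}(\alpha) = \Lambda_p$, and (since $\alpha = \beta \circ \eta$ with $\eta$ monotone increasing) with $u \circ \alpha$ and $r \circ \alpha$ still monotone. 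Simplicity of $\alpha$ then follows: any coincidence $\alpha(t_1) = \alpha(t_2)$ would force $u \circ \alpha$ to be constant on $[t_1, t_2]$, and then $v^2 = r^2 - u^2$ (monotone decreasing because $r^2$ is and $u^2$ is monotone increasing on $u \ge 0$) would also be constant, so $\alpha$ would be constant on $[t_1,t_2]$ in contradiction with $|\alpha'|_g = L_g > 0$ almost everywhere.

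The transversality condition $v \circ \alpha(t) \ge c(1-t)$ is the main point and I would derive it from the endpoint behavior. Writing $u' = xx' - yy'$ and $r' = xx' + yy'$, the inequalities $u' \ge 0$ and $r' \le 0$ combine to $|xx'| \le |yy'|$ almost everywhere, hence $x'^2 \le (y/x)^2 y'^2$. Since $x \to x^* \ge \sqrt{u^*} > 0$ and $y \to 0$ at the endpoint, the ratio $y/x$ is at most $1/2$ for $t$ close to $1$; combined with the arclength identity $x'^2 + y'^2 = L_g^2$ this forces $|y'| \ge c_0 L_g$ almost everywhere in a neighborhood of the endpoint, for some absolute constant $c_0 > 0$. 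Integrating $y(t) = \int_t^1 |y'(s)|\,ds$ gives $y(t) \ge c_0 L_g (1-t)$ near $t = 1$, and $v = xy \ge \sqrt{u^*}\,y$ then yields transversality in a neighborhood of the endpoint. On the complementary interval $[0, 1-\delta]$ the continuous function $v/(1-t)$ is strictly positive, hence bounded below, giving the global constant. The main obstacle is precisely this transversality step: it would fail for a generic arclength-parametrized curve in $\cR_g$ (one can easily arrange $y$ to approach $0$ super-linearly), but the monotonicity of $u \circ \alpha$ and $r \circ \alpha$ forces the curve to approach the endpoint perpendicular to the $x$-axis, which is exactly what transversality encodes.
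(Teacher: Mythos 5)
Your proof is correct and follows essentially the same route as the paper: start from the maximizer $\beta \in \cR_h^+$ of Lemma \ref{rumono}, use the hypothesis to rule out $r \circ \beta(1) = 0$ (equivalently $u^* = 0$), deduce finite $g$-length from the monotonicity of $u$, $r$, and hence $v$, reparametrize via Lemma \ref{garc}, and read off simplicity and the transversality bound \eqref{rgtrans} from the preserved monotonicities. The only differences are cosmetic — you work in $(x,y)$-coordinates for the length bound rather than $(u,v)$-coordinates, and you fully flesh out the endpoint computation (the inequality $|xx'| \le |yy'|$ a.e. combined with $y/x \to 0$ and the arclength identity) that the paper compresses into the assertion that $(v \circ \alpha)'$ is eventually bounded above by $-c$.
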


\begin{proof}
By Lemma \ref{rumono}, there is a curve $\beta$ in $\cR_h^+$ such that $\lambda_{1,p}(\beta) = \Lambda_p$.
Moreover $u \circ \beta$ is monotonically increasing and $r \circ \beta$ is monotonically decreasing.
We claim that $r \circ \beta(1) >0$.
To prove this, suppose that $r \circ \beta(1)=0$.
Then $u \circ \beta$ is identically zero and the reparametrization of $\beta$ given by Lemma \ref{garc} is $\sigma$.
In particular $x_0=y_0$ and Lemma \ref{garc} implies that $\lambda_{1,p}(\sigma)=\Lambda_p$.
By this contradiction $r \circ \beta(1)>0$, so $u \circ \beta(1)>0$.
Since $u \circ \beta(0)\ge0$, the monotonicity of $r \circ \beta$ and $u \circ \beta$ together imply that $v \circ \beta$ is monotonically decreasing over $[0,1]$.
The monotonicity of $u \circ \beta$ and $v \circ \beta$ together imply that $\beta$ has finite length with respect to the metric $du^2 + dv^2$.
Since $r \circ \beta$ is positive over $[0,1]$, this implies that $\beta$ has finite length with respect to $g$.
Let $\alpha$ be the reparametrization of $\beta$ given by Lemma \ref{garc}.
Then $\alpha$ is in $\cR_g$ and $| \alpha'(t)|_g=L_g(\alpha)$ for almost every $t$ in $[0,1]$.
Also $\alpha$ is simple, because $u \circ \alpha$ and $v \circ \alpha$ are monotonic.
Furthermore $\lambda_{1,p}(\alpha) \ge \lambda_{1,p}(\beta)$, so $\lambda_{1,p}(\alpha) = \Lambda_p$.
Moreover $r \circ \alpha$ is monotonically decreasing over $[0,1]$ and $u \circ \alpha$ is monotonically increasing over $[0,1]$.
Therefore there is a constant $c>0$ such that if $t$ close to $1$ and $\alpha$ is differentiable at $t$, then
\[
	(v \circ \alpha)'(t) < -c
\]
This implies \eqref{rgtrans}, so $\alpha$ is in $\cR_g^*$.
\end{proof}

\section{Regularity}

In this section we complete the proof of Lemma \ref{exreg} by establishing regularity of a maximizing curve in $\cR_g^*$.
The following lemma gives sufficient conditions for a curve in $\cR_g$ to admit an eigenfunction.
Let $\Lip_{0,\operatorname{loc}}([0,1))$ be the set of functions $w:[0,1) \to \R$ such that $w(0)=0$ and $w$ is locally Lipschitz over $[0,1)$, i.e. Lipschitz over $[0,a]$ for every $a$ in $(0,1)$.

\begin{Lemma}
\label{efex}
Let $\alpha$ be a curve in $\cR_g$.
Assume that $u \circ \alpha(1)>0$.
Let $c>0$ and assume that $| \alpha'(t) |_g \ge c$ for almost every $t$ in $[0,1]$.
Assume that for all $t$ in $[0,1]$,
\[
\label{efexva1}
	v \circ \alpha(t) \ge c(1-t)
\]
Fix $p \ge 2$.
Then there is a function $\phi$ in $\Lip_{0,\operatorname{loc}}([0,1))$ such that
\[
	\frac{\int_0^1 \frac{|\phi'|^p F_\alpha}{| \alpha' |_g^{p-1}} \,dt}{\int_0^1 |\phi|^p F_\alpha | \alpha' |_g \,dt} = \lambda_{1,p}(\alpha)
\]
Moreover $\phi(t) > 0$ for every $t$ in $(0,1)$.
\end{Lemma}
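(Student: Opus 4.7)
My plan is to apply the direct method to the Rayleigh quotient. Take a minimizing sequence $\{w_j\} \subset \Lip_0([0,1])$ normalized so $\int_0^1 |w_j|^p F_\alpha |\alpha'|_g \, dt = 1$; since the quotient is unchanged by $w \mapsto |w|$, I may assume $w_j \geq 0$. For any $a \in (0,1)$, the hypothesis $v \circ \alpha(t) \geq c(1-t)$ gives $F_\alpha(t) \geq c_a > 0$ on $[0,a]$, and together with the lower bound $|\alpha'|_g \geq c$ and the upper bound $|\alpha'|_g \leq M$ (from $\alpha$ being Lipschitz), the weights $F_\alpha / |\alpha'|_g^{p-1}$ and $F_\alpha |\alpha'|_g$ are uniformly bounded above and below away from zero on $[0,a]$. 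Since the numerator of the Rayleigh quotient is bounded in $j$, this yields a uniform $W^{1,p}([0,a])$ bound on $w_j$. Rellich-Kondrachov plus a diagonal argument produces a subsequence converging weakly in each $W^{1,p}([0,a])$ and uniformly on each $[0,a]$ to a nonnegative limit $\phi$ with $\phi(0) = 0$.

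\textbf{Tail control and passage to the limit.} The decisive step is to prevent mass from escaping to the degenerate endpoint $t=1$. Because $v$ is smooth on $\R^{2n}/G$ and $\alpha$ is Lipschitz, $v \circ \alpha$ is Lipschitz; together with $v \circ \alpha(1) = 0$ and the hypothesis $v \circ \alpha(t) \geq c(1-t)$ this forces $F_\alpha(t) \sim (1-t)^{n-1}$ near $t = 1$. For any $w \in \Lip_0([0,1])$, Hölder's inequality applied to $w(t) = \int_0^t w' \, ds$ gives
\[
|w(t)|^p \;\leq\; \left(\int_0^1 \frac{|w'|^p F_\alpha}{|\alpha'|_g^{p-1}}\,ds\right) \cdot \left(\int_0^t \frac{|\alpha'|_g}{F_\alpha^{1/(p-1)}}\,ds\right)^{p-1},
\]
and a case split (on whether $p > n$, $p = n$, or $p < n$) with the explicit weight $(1-t)^{n-1}$ establishes
\[
\int_r^1 |w|^p F_\alpha |\alpha'|_g \, dt \;\leq\; \epsilon_r \int_0^1 \frac{|w'|^p F_\alpha}{|\alpha'|_g^{p-1}} \, dt,
\]
with $\epsilon_r \to 0$ as $r \to 1$. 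This tail bound, combined with uniform convergence on each $[0,r]$, forces $\int_0^1 |\phi|^p F_\alpha |\alpha'|_g \, dt = 1$, so $\phi \not\equiv 0$. Lower semicontinuity of the numerator under weak $L^p$ convergence of $w_j'$ on each $[0,r]$ (from convexity of $|\xi|^p$) then gives $\int_0^1 |\phi'|^p F_\alpha / |\alpha'|_g^{p-1} \, dt \leq \lambda_{1,p}(\alpha)$, and approximating $\phi$ by cutoffs $\phi \cdot \chi_r \in \Lip_0([0,1])$ (with $\chi_r$ a linear cutoff to zero near $t=1$) gives the reverse inequality, again using the vanishing of the weights.

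\textbf{Regularity, positivity, and main obstacle.} On each $[0,a]$ the function $\phi$ satisfies the weak Euler-Lagrange equation
\[
-\frac{d}{dt}\bigg(\frac{F_\alpha}{|\alpha'|_g^{p-1}}\, |\phi'|^{p-2}\phi'\bigg) = \lambda_{1,p}(\alpha) \cdot F_\alpha |\alpha'|_g \cdot |\phi|^{p-2}\phi,
\]
with bounded, uniformly elliptic coefficients. Integrating once, $(F_\alpha / |\alpha'|_g^{p-1})\, |\phi'|^{p-2}\phi'$ is Lipschitz in $t$, and since the coefficient is bounded below, $|\phi'|$ is bounded on $[0,a]$, placing $\phi$ in $\Lip_{0,\operatorname{loc}}([0,1))$. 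For positivity, suppose $\phi(t_0) = 0$ for some $t_0 \in (0,1)$. Since $\phi \geq 0$ is $C^1$, also $\phi'(t_0) = 0$; integrating the Euler-Lagrange equation from $t_0$, the sign of $|\phi'|^{p-2}\phi'$ matches a sign forced by $\phi^{p-1} \geq 0$, which combined with $\phi \geq 0$ forces $\phi \equiv 0$ in a neighborhood of $t_0$, and by connectedness $\phi \equiv 0$ on $(0,1)$, contradicting $\phi \not\equiv 0$. I expect the Hardy-type tail estimate to be the principal technical obstacle: the interplay between the degeneracy rate $(1-t)^{n-1}$ of $F_\alpha$ and the transversality hypothesis $v \circ \alpha \geq c(1-t)$ is precisely what supplies the compactness needed for the direct method to succeed.
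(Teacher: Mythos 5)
Your overall plan (direct method, then Euler--Lagrange regularity, then positivity) matches the paper, but the implementation diverges at the compactness step, and that is where there is a genuine gap. The paper sidesteps tail estimates entirely: using the hypotheses it sandwiches $C_1(1-t)^{n-1}\le F_\alpha(t)\le C_2(1-t)^{n-1}$, identifies a function $f$ on $[0,1]$ with the radial function $w(x)=f(1-|x|)$ on the unit ball $B_n\subset\R^n$, observes that the weighted spaces $L^p(\alpha)$ and $W_0^{1,p}(\alpha)$ are then equivalent to the radial subspaces of $L^p(B_n)$ and $W_0^{1,p}(B_n)$, and invokes Rellich--Kondrachov to get compactness of the embedding $W_0^{1,p}(\alpha)\hookrightarrow L^p(\alpha)$ in one stroke, for every $p$ in scope. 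You instead try to establish compactness by hand through a uniform tail bound
\[
\int_r^1 |w|^p F_\alpha |\alpha'|_g\,dt \le \epsilon_r \int_0^1 \frac{|w'|^p F_\alpha}{|\alpha'|_g^{p-1}}\,dt,
\]
and the Hölder computation you display in its support only works when $p>n$: the second factor $\bigl(\int_0^t |\alpha'|_g\, F_\alpha^{-1/(p-1)}\,ds\bigr)^{p-1}\sim\bigl(\int_0^t(1-s)^{-(n-1)/(p-1)}\,ds\bigr)^{p-1}$ diverges as $t\to1$ precisely when $p\le n$, which does occur here (the lemma is stated for all $p\ge2$ and $n\ge2$). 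You flag "a case split" and acknowledge this as "the principal technical obstacle," but you never supply the $p\le n$ argument; the pointwise bound $|w(t)|^p\lesssim\text{(numerator)}$ that your displayed inequality would give is simply false there. The estimate you want is true, but proving it in those cases amounts to a Sobolev/Hölder argument on $B_n$ (e.g. $\|w\|_{L^{p^*}(B_n)}\lesssim\|\nabla w\|_{L^p(B_n)}$ plus Hölder on a small ball around the origin) --- that is, exactly the identification with $B_n$ that the paper makes explicit and you avoid. As written, the compactness claim is unsupported for $p\le n$, so this step needs to be filled.

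Two smaller observations. Your positivity argument differs from the paper's: you run an elementary ODE/sign argument at an interior zero of $\phi$ and conclude by connectedness, whereas the paper cites Trudinger's Harnack inequality. Your one-dimensional argument is correct (after establishing $\phi\in C^1$ locally from the integrated flux, which you do) and is in fact more self-contained for this ODE setting. Your derivation of local Lipschitz regularity from the integrated Euler--Lagrange equation is also fine and essentially what the paper asserts without elaboration.
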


\begin{proof}
Let $F_\alpha=F \circ \alpha$.
Let $L^p(\alpha)$ be the set of measurable functions $f:[0,1] \to \R$ such that
\[
	\| f \|_{L^p(\alpha)} = \bigg( \int_0^1 |f|^p F_\alpha | \alpha'(t) |_g \,dt \bigg)^{1/p} < \infty
\]
Let $C_0^1([0,1])$ be the set of continuously differentiable functions $f:[0,1] \to \R$ such that $f(0)=0$.
Let $W_0^{1,p}(\alpha)$ be the completion of $C_0^1([0,1])$ with respect to the norm
\[
	\| f \|_{W_0^{1,p}(\alpha)} = \bigg( \int_0^1 \frac{|f'|^p F_\alpha}{| \alpha'(t) |_g^{p-1}} \,dt \bigg)^{1/p} < \infty
\]
Note that $\alpha$ is Lipschitz, $u \circ \alpha(1) >0$, and $v \circ \alpha(1)=0$.
Therefore it follows from \eqref{guv} that there is a constant $C>0$ such that
\[
\label{efexva2}
	v \circ \alpha(t) \le C (1-t)
\]
By \eqref{efexva1} and \eqref{efexva2}, there are positive constants $C_1 < C_2$ such that for all $t$ in $[0,1]$,
\[
	C_1 (1-t)^{n-1} \le F_\alpha(t) \le C_2 (1-t)^{n-1}
\]
Let $B_n$ be a unit ball in $\R^n$.
Identify a function $f$ in $L^p(\alpha)$ or $W_0^{1,p}(\alpha)$ with a radial function $w: B_n \to \R$ defined by
\[
	w(x) = f(1-|x|)
\]
The space $L^p(\alpha)$ is a Banach space, equivalent to the subspace of $L^p(B_n)$ consisting of radial functions.
Similarly, the space $W_0^{1,p}(\alpha)$ is a Banach space, equivalent to the subspace of $W_0^{1,p}(B_n)$ consisting of radial functions.
In particular $W_0^{1,p}(\alpha)$ is reflexive.
Note that a function in $W_0^{1,p}(\alpha)$ is necessarily continuous over $[0,1)$.
By the Rellich-Kondrachov theorem, the space $W_0^{1,p}(B_n)$ is compactly embedded in $L^p(B_n)$.
Therefore $W_0^{1,p}(\alpha)$ is compactly embedded in $L^p(\alpha)$.
Now the direct method in the calculus of variations establishes the existence of a function $\phi$ in $W_0^{1,p}(\alpha)$ such that
\[
	\frac{\int_0^1 \frac{|\phi'|^p F_\alpha}{| \alpha' |_g^{p-1}} \,dt}{\int_0^1 |\phi|^p F_\alpha | \alpha' |_g \,dt} = \lambda_{1,p}(\alpha)
\]
We may assume $\phi(t) \ge 0$ for all $t$ in $[0,1]$, by possibly replacing $\phi$ with $|\phi|$.
Moreover $\phi$ weakly satisfies the corresponding Euler-Lagrange equation, i.e.
\[
	-\bigg( \frac{|\phi'|^{p-2} \phi' F_\alpha}{| \alpha' |_g^{p-1}} \bigg)' = \lambda_{1,p}(\alpha) F_\alpha | \alpha'|_g (\phi)^{p-1}
\]
This equation implies that $\phi$ is in $\Lip_{0,\operatorname{loc}}([0,1))$.
Furthermore a Harnack inequality of Trudinger \cite[Theorem 1.1]{NT} implies that $\phi$ does not vanish in $(0,1)$.
\end{proof}

In the next lemma we show that for any function $w$ in $\Lip_{0,\operatorname{loc}}([0,1))$, the Rayleigh quotient of $w$ is greater than or equal to $\lambda_{1,p}$.

\begin{Lemma}
\label{liplemma}
Let $\alpha$ be a curve in $\cR_g$ and let $F_\alpha = F \circ \alpha$.
Let $w$ be in $\Lip_{0,\operatorname{loc}}([0,1))$.
Fix $p \ge 2$ and assume that
\[
	\int_0^1 |w|^p F_\alpha | \alpha' |_g \,dt < \infty
\]
Then
\[
	\lambda_{1,p}(\alpha) \le \frac{\int_0^1 \frac{|w'|^p F_\alpha}{| \alpha' |_g^{p-1}} \,dt}{\int_0^1 |w|^p F_\alpha | \alpha' |_g \,dt}
\]
\end{Lemma}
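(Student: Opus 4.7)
The plan is to approximate the locally Lipschitz function $w$ by globally Lipschitz truncations and invoke the variational characterization \eqref{rgrq} of $\lambda_{1,p}(\alpha)$. First I would reduce to the case where $\int_0^1 |w'|^p F_\alpha / |\alpha'|_g^{p-1}\,dt$ is finite and $\int_0^1 |w|^p F_\alpha |\alpha'|_g\,dt$ is positive, since otherwise the claimed inequality holds trivially.

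For each $\delta \in (0,1)$ I would define $w_\delta : [0,1] \to \R$ by setting $w_\delta(t) = w(t)$ on $[0, 1-\delta]$ and $w_\delta(t) = w(1-\delta)$ on $[1-\delta, 1]$. Because $w \in \Lip_{0,\operatorname{loc}}([0,1))$, the function $w$ is Lipschitz on $[0,1-\delta]$, so extending by a constant on $[1-\delta,1]$ produces a function in $\Lip_0([0,1])$ to which \eqref{rgrq} applies.

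The next step is to bound the Rayleigh quotient of $w_\delta$ from above. Since $w_\delta' = w'$ a.e. on $[0,1-\delta]$ and vanishes a.e. on $[1-\delta, 1]$, the numerator of the Rayleigh quotient of $w_\delta$ equals $\int_0^{1-\delta} |w'|^p F_\alpha / |\alpha'|_g^{p-1}\,dt$, which is at most the full integral over $[0,1]$. The denominator is at least $\int_0^{1-\delta} |w|^p F_\alpha |\alpha'|_g\,dt$, since the integrand is nonnegative on $[1-\delta,1]$. Combining these one-sided bounds with \eqref{rgrq} gives
\[
	\lambda_{1,p}(\alpha) \le \frac{\int_0^1 \frac{|w'|^p F_\alpha}{|\alpha'|_g^{p-1}}\,dt}{\int_0^{1-\delta} |w|^p F_\alpha |\alpha'|_g\,dt}
\]
for every $\delta \in (0,1)$. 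Letting $\delta \to 0^+$ and using monotone convergence in the denominator yields the claim.

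The point of the argument is that we do not need the Rayleigh quotient of $w_\delta$ itself to converge to that of $w$; it suffices to control the numerator from above and the denominator from below by quantities whose limits match the target. For this reason the proof is essentially insensitive to how $w$ might behave near $t=1$: even if $|w(1-\delta)|$ grows, the extra mass only enlarges the denominator of $w_\delta$'s Rayleigh quotient, which strengthens rather than weakens our bound. There is thus no substantive obstacle beyond verifying that the degenerate cases excluded at the outset genuinely give a trivial inequality.
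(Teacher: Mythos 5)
Your proof is correct and follows essentially the same approach as the paper: truncate $w$ to a constant past a cutoff to obtain a competitor in $\Lip_0([0,1])$, apply the variational characterization, and pass to the limit. The paper invokes Fatou's lemma on the denominator while you simply discard the nonnegative tail contribution and use monotone convergence, but this is a cosmetic difference in handling the limit, not a distinct argument.
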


\begin{proof}
For each $s$ in $(0,1)$, define a function $w_s$ in $\Lip_0([0,1])$ by
\[
	w_s(t) =
	\begin{cases}
		w(t) & t \in [0,s] \\
		w(s) & t \in [s,1]	 \\
	\end{cases}
\]
For each $s$,
\[
	\lambda_{1,p}(\alpha) \le \frac{\int_0^1 \frac{|w_s'|^p F_\alpha}{| \alpha' |_g^{p-1}} \,dt}{\int_0^1 |w_s|^p F_\alpha | \alpha' |_g \,dt}
\]
Applying the monotone convergence theorem and Fatou's lemma,
\[
	\lambda_{1,p}(\alpha) \le \limsup_{s \nearrow 1} \frac{\int_0^1 \frac{|w_s'|^p F_\alpha}{| \alpha' |_g^{p-1}} \,dt}{\int_0^1 |w_s|^p F_\alpha | \alpha' |_g \,dt}
		\le \frac{\int_0^1 \frac{|w'|^p F_\alpha}{| \alpha' |_g^{p-1}} \,dt}{\int_0^1 |w|^p F_\alpha | \alpha' |_g \,dt}
\]
\end{proof}

In the following lemma we show that if a maximizing curve intersects a small circle at two points, then it must stay inside the circle between those points.

\begin{Lemma}
\label{circlesinterior}
Fix $p \ge 2n-1$.
Let $\alpha$ be a curve in $\cR_g^*$ such that $\lambda_{1,p}(\alpha) = \Lambda_p$.
Let $C$ be a large positive constant.
Let $(x_1,y_1)$ be in the interior of $\R^{2n}/G$.
Let $r_1$ be a positive number such that
\[
	C r_1 \le \min(x_1, y_1)
\]
Define
\[
	D = \bigg\{ (x,y) \in \R^{2n}/G: (x-x_1)^2+(y-y_1)^2 \le r_1^2 \bigg\}
\]
Let $0 \le t_1 < t_2 \le 1$ and assume $\alpha(t_1)$ and $\alpha(t_2)$ lie on the boundary $\del D$.
Assume that, for all $t$ in $[t_1, t_2]$,
\[
	| \alpha(t) - (x_1,y_1) | < 2 r_1
\]
If $C$ is sufficiently large, independent of $x_1$, $y_1$, and $r_1$, then it follows that $\alpha(t)$ is in $D$ for all $t$ in $[t_1, t_2]$. 
\end{Lemma}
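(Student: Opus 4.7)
The plan is to argue by contradiction using an inversion in the circle $\del D$.  Suppose some $t^* \in (t_1, t_2)$ has $|\alpha(t^*) - (x_1, y_1)| > r_1$, and by continuity pick a maximal open subinterval $(s_1, s_2) \subset [t_1, t_2]$ containing $t^*$ with $\rho(t) := |\alpha(t) - (x_1, y_1)| > r_1$ throughout, so $\alpha(s_1), \alpha(s_2) \in \del D$.  Define a competitor $\tilde\alpha$ by setting $\tilde\alpha(t) = \Phi(\alpha(t))$ for $t \in [s_1, s_2]$ and $\tilde\alpha(t) = \alpha(t)$ otherwise, where
\[
	\Phi(P) = (x_1, y_1) + \frac{r_1^2}{|P - (x_1, y_1)|^2} \bigl(P - (x_1, y_1)\bigr)
\]
is the inversion in $\del D$.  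Since $\Phi$ fixes $\del D$, the curve $\tilde\alpha$ is Lipschitz and remains in the interior of $\R^{2n}/G$ on $[0,1)$; reparametrizing by $g$-arc length via Lemma \ref{garc} produces a curve in $\cR_g$ whose eigenvalue is no smaller.  It therefore suffices to produce a strict inequality $\lambda_{1,p}(\tilde\alpha) > \Lambda_p$.

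On $[s_1, s_2]$ the inversion $\Phi$ is conformal with factor $(r_1/\rho)^2 \le 1$, giving $|\tilde\alpha'|_g = (r_1/\rho)^2 |\alpha'|_g$, strict wherever $\rho > r_1$.  Since $\alpha(t)$ and $\tilde\alpha(t)$ both lie in the ball $B((x_1, y_1), 2r_1)$ and $\min(x, y) \ge (C-2) r_1$ throughout this ball, the weight ratio $F_{\tilde\alpha}/F_\alpha = (\tilde x/x)^{n-1}(\tilde y/y)^{n-1}$ equals $1 + O(1/C)$ uniformly in $(x_1, y_1, r_1)$.  Writing $\epsilon = \epsilon(C) \to 0$ as $C \to \infty$, for any test function $w$ these estimates yield pointwise bounds
\[
	\frac{|w'|^p F_{\tilde\alpha}}{|\tilde\alpha'|_g^{p-1}} \ge (1 - \epsilon) \Bigl(\frac{\rho}{r_1}\Bigr)^{2(p-1)} \frac{|w'|^p F_\alpha}{|\alpha'|_g^{p-1}}, \qquad |w|^p F_{\tilde\alpha} |\tilde\alpha'|_g \le (1+\epsilon) \Bigl(\frac{r_1}{\rho}\Bigr)^2 |w|^p F_\alpha |\alpha'|_g
\]
on $[s_1, s_2]$.

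To extract the contradiction I take as test function an eigenfunction $\psi$ of $\tilde\alpha$ supplied by Lemma \ref{efex}, which is strictly positive on $(0, 1)$.  Because $\alpha$ strictly exits $\bar D$, there is an open subinterval $J \subset (s_1, s_2)$ on which $\rho \ge r_1(1+\delta)$ for some $\delta > 0$; on $J$ the ratios $(\rho/r_1)^{2(p-1)}$ and $(r_1/\rho)^2$ are bounded away from $1$ on the favorable sides.  Integrating the pointwise bounds against $|\psi|^p$ and $|\psi'|^p$, and using that $\psi$ solves the weak eigenvalue equation non-trivially (so $|\psi'|$ cannot vanish identically on $J$, lest the associated eigenvalue be zero), the strict gain on $J$ dominates the $\epsilon$-perturbation from the rest of $[s_1, s_2]$ once $C$ is large.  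This gives
\[
	RQ(\psi; \alpha) < RQ(\psi; \tilde\alpha) = \lambda_{1,p}(\tilde\alpha),
\]
where $RQ$ denotes the Rayleigh quotient in \eqref{rgrq}.  Combined with $RQ(\psi; \alpha) \ge \lambda_{1,p}(\alpha) = \Lambda_p$, which follows from Lemma \ref{liplemma} applied to $\psi \in \Lip_{0,\operatorname{loc}}([0,1))$, this produces $\lambda_{1,p}(\tilde\alpha) > \Lambda_p$, contradicting the maximality of $\alpha$.  The main obstacle is the quantitative comparison here: the pointwise gain from the inversion degenerates at the endpoints of $(s_1, s_2)$ where $\rho \to r_1$, so the strict contribution must be isolated on the interior subregion $J$ and shown to overwhelm the $\epsilon(C)$-errors elsewhere; arranging this uniformly forces $C$ to be chosen absolutely large, independent of the particular $(x_1, y_1, r_1)$.
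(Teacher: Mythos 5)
Your overall strategy matches the paper's: reduce to a maximal excursion interval, invert in $\del D$, test the new curve's first eigenfunction $\psi$ (from Lemma \ref{efex}) against the original curve via Lemma \ref{liplemma}, and derive a strict Rayleigh-quotient inequality contradicting maximality. The framework of the argument is correct.

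The gap is in the weight estimate. You bound $F_{\tilde\alpha}/F_\alpha = 1 + O(1/C)$ uniformly on $[s_1,s_2]$, which forces you into the subregion-$J$ argument because near $\del D$ the geometric gain $(\rho/r_1)^{2(p-1)} - 1 \to 0$ while your error term $\epsilon(C)$ stays fixed. But that last step cannot be made uniform: the amount by which the gain on $J$ can overwhelm the $\epsilon$-loss on $[s_1,s_2]\setminus J$ depends on how the eigenfunction $\psi$ distributes its mass between the two sets, and hence on the particular disc $D$ --- so the required $C$ would not be independent of $(x_1,y_1,r_1)$. The fix is to sharpen the weight estimate: the points $\alpha(t)$ and $\tilde\alpha(t)$ are at distance $\rho - r_1^2/\rho \le 2(\rho - r_1)$, and all coordinates are at least $(C-2)r_1$, so in fact
\[
\Big| \frac{F_{\tilde\alpha}(t)}{F_\alpha(t)} - 1 \Big| \le \frac{C'(n)\,(\rho(t)-r_1)}{C\,r_1}
\]
with the deviation proportional to $\rho-r_1$, not uniform. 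Since the favorable factors $(\rho/r_1)^{2(p-1)} - 1$ and $1 - (r_1/\rho)^2$ are both bounded below by a constant (depending only on $p$, $n$) times $(\rho-r_1)/r_1$ on the range $r_1 < \rho < 2r_1$, the gain dominates the error pointwise once $C$ is large, independent of $D$. This is what the paper does: it obtains the strict pointwise inequalities
\[
\frac{F_\beta(t)}{|\beta'(t)|_g^{p-1}} > \frac{F_\alpha(t)}{|\alpha'(t)|_g^{p-1}}, \qquad F_\beta(t)|\beta'(t)|_g < F_\alpha(t)|\alpha'(t)|_g
\]
on all of $(t_1,t_2)$, and then the contradiction is immediate (strictness of the Rayleigh quotient comparison comes from the denominator and the positivity of $\psi$ on $(0,1)$, so you do not even need to argue that $|\psi'|$ is nonzero on $J$). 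You should replace your uniform $\epsilon(C)$ bound with the sharper $(\rho-r_1)$-proportional bound; then the subregion $J$ and the accompanying discussion become unnecessary.
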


\begin{proof}
Suppose not.
It suffices to consider the case where $\alpha(t)$ lies outside of $D$ for every $t$ in $(t_1,t_2)$.
There are Lipschitz functions $r:[t_1,t_2] \to (0, \infty)$ and $\theta:[t_1,t_2] \to \R$ such that for all $t$ in $[t_1,t_2]$,
\[
	\alpha(t) = \Big( x_1 + r(t) \cos \theta(t), y_1 + r(t) \sin \theta(t) \Big)
\]
Note that $r_1 < r(t) < 2r_1$ for all $t$ in $(t_1, t_2)$.
Define a curve $\beta$ in $\cR_g^*$ by
\[
	\beta(t) =
	\begin{cases}
		\alpha(t) & t \in [0,t_1) \cup (t_2,1] \\
		\Big( x_1 + \frac{r_1^2}{r(t)} \cos \theta(t), y_1 + \frac{r_1^2}{r(t)} \sin \theta(t) \Big) & t \in [t_1, t_2] \\
	\end{cases}
\]
Let $F_\alpha=F \circ \alpha$ and $F_\beta= F \circ \beta$.
For all $p\ge 2n-1$ and all $t$ in $(t_1,t_2)$,
\[
	\frac{F_\beta(t)}{| \beta'(t) |_g^{p-1}} > \frac{F_\alpha(t)}{| \alpha'(t)|_g^{p-1}}
\]
Also, for all $t$ in $(t_1,t_2)$,
\[
	F_\beta(t) | \beta'(t)|_g < F_\alpha(t) | \alpha'(t)|_g
\]
By Lemma \ref{efex}, there is a function $\phi$ in $\Lip_{0,\operatorname{loc}}([0,1))$ which is non-vanishing over $(0,1)$ and satisfies
\[
	\lambda_{1,p}(\beta) = \frac{ \int_0^1 \frac{| \phi' |^p F_\beta}{| \beta' |_g^{p-1}} \,dt}{\int_0^1 | \phi |^p | \beta' |_g F_\beta \,dt}
\]
Then by Lemma \ref{liplemma},
\[
	\lambda_{1,p}(\alpha) \le \frac{ \int_0^1 \frac{| \phi' |^p F_\alpha}{| \alpha' |_g^{p-1}} \,dt}{\int_0^1 | \phi |^p | \alpha' |_g F_\alpha \,dt} < \frac{ \int_0^1 \frac{| \phi' |^p F_\beta}{| \beta' |_g^{p-1}} \,dt}{\int_0^1 | \phi |^p | \beta' |_g F_\beta \,dt} = \lambda_{1,p}(\beta)
\]
This is a contradiction, because $\lambda_{1,p}(\alpha)=\Lambda$.
\end{proof}

The next lemma is a variation of the previous lemma for circles centered on the boundary of $\R^{2n}/G$.

\begin{Lemma}
\label{circlesendpoint}
Fix $p \ge 2n-1$.
Let $\alpha$ be a curve in $\cR_g^*$ such that $\lambda_{1,p}(\alpha) = \Lambda_p$.
Let $C$ be a large positive constant.
Let $x_1$ be a positive number.
Let $r_1$ be a positive number such that $C r_1 \le x_1$.
Define
\[
	D = \bigg\{ (x,y) \in \R^{2n}/G: (x-x_1)^2+y^2 \le r_1^2 \bigg\}
\]
Let $0 < t_1 < 1$ and assume $\alpha(t_1)$ lies on the boundary $\del D$.
Assume that, for all $t$ in $[t_1, 1]$,
\[
	| \alpha(t) - (x_1,0) | < 2 r_1
\]
If $C$ is sufficiently large, independent of $x_1$ and $r_1$, then it follows that $\alpha(t)$ is in $D$ for all $t$ in $[t_1, 1]$. 
\end{Lemma}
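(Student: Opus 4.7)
The plan is to argue by contradiction following the scheme of Lemma \ref{circlesinterior}, but with an additional case corresponding to the boundary endpoint. Suppose $\alpha(t)\notin D$ for some $t\in[t_1,1]$. I split into two subcases. (i) If $\alpha(1)\in D$, then by continuity there exist $t_1\le s_1<s_2<1$ with $\alpha(s_1),\alpha(s_2)\in\partial D$ and $\alpha(t)\notin D$ for $t\in(s_1,s_2)$. (ii) If $\alpha(1)\notin D$, then since $\alpha(t_1)\in\partial D$, there is a last time $s_1\in[t_1,1)$ with $\alpha(s_1)\in\partial D$ and $\alpha(t)\notin D$ for all $t\in(s_1,1]$.

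In both cases, I define $\beta$ to agree with $\alpha$ outside the relevant interval (namely $[s_1,s_2]$ in case (i) and $[s_1,1]$ in case (ii)) and to be the inversion of $\alpha$ through the circle $\partial D$ on that interval. Since this inversion is centered on the $x$-axis, it fixes the $x$-axis and preserves the open upper half-plane, so $\beta$ is a continuous curve in $\R^{2n}/G$ from $(x_0,y_0)$ to the boundary. The only genuinely new point is that in case (ii) the endpoint of $\beta$ differs from that of $\alpha$; writing $a$ for the $x$-coordinate of $\alpha(1)$, one checks that the inverted endpoint has $x$-coordinate in $(x_1-r_1,x_1+r_1)\subset(0,\infty)$, so $u\circ\beta(1)>0$. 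Near that endpoint the inversion acts on the $y$-coordinate essentially as multiplication by the positive constant $r_1^2/(a-x_1)^2$, so the linear vanishing $v\circ\beta(t)\ge c'(1-t)$ and the lower velocity bound $|\beta'|_g\ge c'$ are inherited from $\alpha$, up to a change of constant. Hence $\beta\in\cR_g$ and satisfies the hypotheses of Lemma \ref{efex}.

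The main computation is the pointwise comparison on the inverted interval. Polar coordinates $(r,\theta)$ centered at $(x_1,0)$ give the conformal identity $|\beta'|_g=(r_1^2/r^2)|\alpha'|_g$. Since $F=c_n(xy)^{n-1}$ and $y_1=0$, the ratio of the $y$-factors in $F_\beta/F_\alpha$ is exactly $(r_1/r)^{2(n-1)}$, with no cancellation from a $y_1$-term as in the interior case. Writing $A$ for the ratio of the $x$-factors (which tends to $1$ uniformly as $C\to\infty$, since $Cr_1\le x_1$ and $r<2r_1$), one obtains
\[
\frac{F_\beta/|\beta'|_g^{p-1}}{F_\alpha/|\alpha'|_g^{p-1}}=A^{n-1}\left(\frac{r}{r_1}\right)^{2(p-n)},\qquad \frac{F_\beta|\beta'|_g}{F_\alpha|\alpha'|_g}=A^{n-1}\left(\frac{r_1}{r}\right)^{2n}.
\]
Because $p\ge 2n-1$ forces $p-n\ge n-1\ge 1>0$, on the annulus $r_1<r<2r_1$ the first ratio is strictly greater than $1$ and the second strictly less than $1$, uniformly once $C$ is large enough. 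Taking a positive eigenfunction $\phi$ for $\beta$ from Lemma \ref{efex} and testing it against $\alpha$ via Lemma \ref{liplemma} then gives $\lambda_{1,p}(\alpha)<\lambda_{1,p}(\beta)\le\Lambda_p$, contradicting $\lambda_{1,p}(\alpha)=\Lambda_p$.

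The main obstacle I expect is case (ii), which has no direct analogue in Lemma \ref{circlesinterior}: one must verify that the inverted endpoint still lies on the positive $x$-axis (so that $\beta\in\cR_g$ with $u\circ\beta(1)>0$) and that the transversality condition \eqref{rgtrans} survives the inversion. Once this is in place, case (i) is essentially a transcription of Lemma \ref{circlesinterior}, and the only other novelty is that the exponent $p-n$ (rather than $p-1$) now controls the sign of the key estimate, which is exactly what the hypothesis $p\ge 2n-1$ is engineered to handle.
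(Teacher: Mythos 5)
Your proposal is correct and follows the same strategy as the paper's proof, which is simply to reduce to the argument of Lemma \ref{circlesinterior} with $y_1=0$ (the paper's proof is essentially one sentence saying to repeat that argument). You supply useful detail that the paper leaves implicit: in particular the explicit ratios
\[
\frac{F_\beta/|\beta'|_g^{p-1}}{F_\alpha/|\alpha'|_g^{p-1}}=A^{n-1}\Big(\tfrac{r}{r_1}\Big)^{2(p-n)},\qquad
\frac{F_\beta|\beta'|_g}{F_\alpha|\alpha'|_g}=A^{n-1}\Big(\tfrac{r_1}{r}\Big)^{2n},
\]
make transparent that the decisive exponent here is $p-n$, not $p-1$ as in the interior case (where the $y$-factor of $F$ is also approximately $1$), and that $p\ge 2n-1$ together with $n\ge 2$ gives $p-n\ge n-1\ge 1>0$. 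Your careful check that in case (ii) the inverted endpoint stays on the positive $x$-axis with $u\circ\beta(1)>0$, and that the transversality bound \eqref{rgtrans} and the lower speed bound survive the inversion (because $r_1<r<2r_1$ keeps the conformal factor $r_1^2/r^2$ in $[1/4,1]$), is exactly the verification needed to invoke Lemma \ref{efex}; the paper's terse proof elides these points. One small caveat: the claim that the first ratio is strictly greater than $1$ ``uniformly'' is a bit loose, since as $r\searrow r_1$ it tends to $1$; what actually matters, and what you effectively use, is that the deviation of $A^{n-1}$ from $1$ is $O\big((r-r_1)/(Cr_1)\big)$ while $(r/r_1)^{2(p-n)}-1$ is bounded below by a constant multiple of $(r-r_1)/r_1$, so choosing $C$ large makes the product strictly $>1$ pointwise on $(t_1,t_2)$.
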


\begin{proof}
Suppose not.
It suffices to consider two cases.
In the first case we assume that there is a number $t_2$ in $(t_1,1]$ such that $\alpha(t)$ lies outside of $D$ for every $t$ in $(t_1,t_2)$ and $\alpha(t_2)$ lies on the boundary $\del D$.
In the second case, we assume that $\alpha(t)$ lies outside of $D$ for every $t$ in $(t_1,1]$.
In the second case, define $t_2=1$.
In either case, define $y_1=0$.
Then repeating the argument used to prove Lemma \ref{circlesinterior} yields a contradiction.
\end{proof}

In the next lemma, we show that an eigenvalue maximizing curve in $\cR_g^*$ is absolutely differentiable.

\begin{Lemma}
\label{abdiff}
Fix $p \ge 2n-1$.
Let $\alpha$ be a curve in $\cR_g^*$ such that $\lambda_{1,p}(\alpha) = \Lambda_p$.
Let $t_0$ be a point in $[0,1)$.
Let $\{ p_k \}$ be a sequence in $[0, t_0]$ converging to $t_0$ and let $\{ q_k \}$ be a sequence in $[t_0,1)$ converging to $t_0$
Assume that $p_k \neq q_k$ for all $k$.
Then
\[
	\lim_{k \to \infty} \frac{| \alpha(q_k)-\alpha(p_k)|}{|q_k-p_k|} = L_g(\alpha)
\]
In particular,
\[
	\lim_{t \to t_0} \frac{| \alpha(t)-\alpha(t_0)|}{|t-t_0|} = L_g(\alpha)
\]
\end{Lemma}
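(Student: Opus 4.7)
The upper bound $|\alpha(q_k)-\alpha(p_k)| \le L\,|q_k-p_k|$, where $L := L_g(\alpha)$, is immediate from the constant-speed parametrization, which makes $\alpha$ $L$-Lipschitz with respect to $g$; the second displayed limit is the special case $p_k = t_0$ of the first. The plan is to prove the matching lower bound in the limit by contradiction, mimicking the comparison method of Lemma \ref{circlesinterior}: build a curve $\tilde\alpha_k \in \cR_g$ with pointwise strictly larger numerator integrand and strictly smaller denominator integrand than $\alpha$, then use its eigenfunction (from Lemma \ref{efex}) as a test function on $\alpha$ (via Lemma \ref{liplemma}) to conclude $\lambda_{1,p}(\tilde\alpha_k) > \lambda_{1,p}(\alpha) = \Lambda_p$, which contradicts $\cR_g \subset \cR_h^+$.

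Writing $\Delta_k = q_k - p_k$, I pass to a subsequence satisfying $|\alpha(q_k) - \alpha(p_k)| \le (L - \epsilon)\Delta_k$ for some fixed $\epsilon > 0$, and set $\tilde\alpha_k$ to equal $\alpha$ on $[0, p_k] \cup [q_k, 1]$ and be the constant-speed straight-line parametrization of the segment from $\alpha(p_k)$ to $\alpha(q_k)$ on $[p_k, q_k]$. On $[p_k, q_k]$ the speed is $\sigma_k L$ with $\sigma_k := |\alpha(q_k)-\alpha(p_k)|/(L\Delta_k) \le 1 - \epsilon/L$; and since $\alpha(t_0)$ lies in the interior of $\R^{2n}/G$ (because $t_0<1$), for $k$ large the segment stays in the interior, so $\tilde\alpha_k \in \cR_g$, and the remaining hypotheses of Lemma \ref{efex} for $\tilde\alpha_k$ are inherited from $\alpha$.

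The pointwise integrand inequalities on $(p_k, q_k)$ reduce to $\sigma_k^{p-1} < F_{\tilde\alpha_k}(t)/F_\alpha(t) < 1/\sigma_k$. The flanking bounds $\sigma_k^{p-1} < 1 < 1/\sigma_k$ are strict and bounded away from $1$ uniformly in $k$, while both $\alpha(t)$ and $\tilde\alpha_k(t)$ converge uniformly on $[p_k,q_k]$ to $\alpha(t_0)$, so $F_{\tilde\alpha_k}/F_\alpha \to 1$ by continuity of $F$ at $\alpha(t_0)$. Both pointwise inequalities therefore hold on $(p_k,q_k)$ for $k$ large; on $[0, p_k] \cup [q_k,1]$, $\tilde\alpha_k = \alpha$ and the integrands coincide. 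Letting $\phi_k$ be the eigenfunction of $\tilde\alpha_k$ from Lemma \ref{efex}, I obtain
\[
	\lambda_{1,p}(\alpha) \le R_\alpha(\phi_k) < R_{\tilde\alpha_k}(\phi_k) = \lambda_{1,p}(\tilde\alpha_k) \le \Lambda_p = \lambda_{1,p}(\alpha),
\]
a contradiction.

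The step I expect to be the main obstacle is the uniform lower bound $|\tilde\alpha_k'|_g \ge c > 0$ that Lemma \ref{efex} requires: on $(p_k,q_k)$ this equals $\sigma_k L$, which could tend to $0$ along a subsequence if $|\alpha(q_k)-\alpha(p_k)| = o(\Delta_k)$. If $\liminf_k \sigma_k > 0$ the argument above goes through after a further subsequence. Otherwise I will treat the degenerate regime $\sigma_k \to 0$ separately by applying Lemma \ref{circlesinterior} directly: in this regime the arc, having $g$-length $L\Delta_k$ but endpoints $o(\Delta_k)$ apart and being simple, must have diameter of order $L\Delta_k$, so some $\alpha(t_k^*)$ lies outside a disc $D_k$ of radius comparable to $L\Delta_k$ that contains $\alpha(p_k)$ and $\alpha(q_k)$ on its boundary and satisfies the $2r_k$ and smallness conditions of Lemma \ref{circlesinterior}.
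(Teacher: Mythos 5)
Your proposal follows essentially the same strategy as the paper: replace $\alpha$ on $[p_k,q_k]$ by the straight segment, use Lemma~\ref{efex} to produce an eigenfunction for the modified curve, and feed it through Lemma~\ref{liplemma} to contradict maximality of $\lambda_{1,p}(\alpha)$. The main body of your argument, including the reduction to $\sigma_k^{p-1} < F_{\tilde\alpha_k}/F_\alpha < 1/\sigma_k$ and the observation that $F_{\tilde\alpha_k}/F_\alpha \to 1$ by continuity of $F$ near the interior point $\alpha(t_0)$, is correct.

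However, the ``degenerate regime'' you flag in your last paragraph does not actually arise, and the fallback via Lemma~\ref{circlesinterior} is unnecessary. The contradiction is obtained for a single fixed large $k$, not in a limit over $k$. For each fixed $k$, simplicity of $\alpha$ (built into $\cR_g^*$) gives $\alpha(p_k)\neq\alpha(q_k)$, hence $\sigma_k>0$, so $|\tilde\alpha_k'|_g$ is bounded below by $\min(L,\sigma_k L)>0$ almost everywhere. This is all Lemma~\ref{efex} requires, since its constant $c$ is allowed to depend on the curve and need not be uniform across the family $\{\tilde\alpha_k\}$. (This is precisely where the paper's proof invokes the hypothesis that $\alpha$ is simple.) Note also that as $\sigma_k$ shrinks, the flanking bounds $\sigma_k^{p-1}$ and $1/\sigma_k$ move farther from $1$, so the pointwise integrand inequalities become easier, not harder, to satisfy. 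You should drop the final paragraph and instead simply remark that $\sigma_k>0$ by simplicity.
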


\begin{proof}
Suppose not.
Since $\alpha$ is Lipschitz with constant $L_g(\alpha)$, there is a constant $c$ such that
\[
	\liminf_{k \to \infty} \frac{| \alpha(q_k)-\alpha(p_k)|}{|q_k-p_k|} < c < L_g(\alpha)
\]
By passing to subsequences, we may assume that for all $k$,
\[
	\frac{| \alpha(q_k)-\alpha(p_k)|}{|q_k-p_k|} < c
\]
Fix $k$ large and define a curve $\beta$ in $\cR_g^*$ by
\[
	\beta(t) =
	\begin{cases}
		\alpha(t) & 0\le t \le p_k \\
		\alpha(p_k) + (t-p_k) \frac{\alpha(q_k)-\alpha(p_k)}{q_k-p_k} & p_k \le t \le q_k\\
		\alpha(t) & q_k \le t \le 1
	\end{cases}
\]
Let $F_\beta=F \circ \beta$.
Since $\alpha$ is simple, Lemma \ref{efex} shows that there is a function $\phi$ in $\Lip_{0,\operatorname{loc}}([0,1))$ which is non-vanishing over $(0,1)$ and satisfies
\[
	\lambda_{1,p}(\beta) = \frac{ \int_0^1 \frac{| \phi' |^p F_\beta}{| \beta' |_g^{p-1}} \,dt}{\int_0^1 | \phi |^p | \beta' |_g F_\beta \,dt}
\]
Let $F_\alpha=F \circ \alpha$.
If $k$ is sufficiently large, then for all $t$ in $(p_k, q_k)$,
\[
	\frac{ F_\beta(t)}{| \beta'(t)|_g^{p-1}} > \frac{ F_\alpha(t)}{| \alpha'(t)|_g^{p-1}}
\]
Also for all $t$ in $(p_k,q_k)$,
\[
	 F_\beta(t) | \beta'(t) |_g < F_\alpha(t) | \alpha'(t)|_g
\]
Therefore if $k$ is sufficiently large, then by Lemma \ref{liplemma},
\[
	\lambda_{1,p}(\alpha) \le \frac{ \int_0^1 \frac{| \phi' |^p F_\alpha}{| \alpha' |_g^{p-1}} \,dt}{\int_0^1 | \phi |^p | \alpha' |_g F_\alpha \,dt} < \frac{ \int_0^1 \frac{| \phi' |^p F_\beta}{| \beta' |_g^{p-1}} \,dt}{\int_0^1 | \phi |^p | \beta' |_g F_\beta \,dt} = \lambda_{1,p}(\beta)
\]
This is a contradiction, because $\lambda_{1,p}(\alpha) = \Lambda_p$.
\end{proof}

Now we can show that an eigenvalue maximizing curve in $\cR_g^*$ is differentiable.

\begin{Lemma}
\label{diff}
Fix $p \ge 2n-1$.
Let $\alpha$ be a curve in $\cR_g^*$ such that $\lambda_{1,p}(\alpha) = \Lambda_p$.
Then $\alpha$ is differentiable over $[0,1)$.
Moreover $|\alpha'(t)|_g= L_g(\alpha)$ for every $t$ in $[0,1)$.
\end{Lemma}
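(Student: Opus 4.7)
I would prove Lemma~\ref{diff} by contradiction. Since $\alpha \in \cR_g^*$ has $|\alpha'|_g = L$ almost everywhere (where $L := L_g(\alpha)$), the curve $\alpha$ is globally $L$-Lipschitz, so the difference quotients $(\alpha(t) - \alpha(t_0))/(t - t_0)$ for $t \ne t_0$ near $t_0$ lie in a compact subset of $\R^2$. If $\alpha$ fails to be differentiable at some $t_0 \in [0, 1)$, I extract two distinct subsequential limits $v_1 \ne v_2$. Applying Lemma~\ref{abdiff} with $p_k \equiv t_0$ (for sequences $t_k \to t_0^+$) or $q_k \equiv t_0$ (for $t_k \to t_0^-$) forces $|v_i|_g = L$.

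The main geometric input I would establish is an asymptotic parallelism. For $p < t_0 < q$ near $t_0$, write $a = \alpha(q) - \alpha(t_0)$ and $b = \alpha(t_0) - \alpha(p)$. The Lipschitz bound gives $|a| \le L(q - t_0)$, $|b| \le L(t_0 - p)$, while Lemma~\ref{abdiff} forces $|a + b| = L(q - p) - o(q - p)$. From the identity $(|a| + |b|)^2 - |a + b|^2 = 2|a||b|(1 - \cos \theta)$ (where $\theta$ is the angle between $a$ and $b$) and the estimate $(|a| + |b|)^2 - |a + b|^2 \le L^2(q - p)^2 - (L(q - p) - o(q - p))^2 = o((q - p)^2)$, I deduce $|a||b|(1 - \cos \theta) = o((q - p)^2)$. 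When $q - t_0$ and $t_0 - p$ are of comparable size $\delta$, this implies $|a|/\delta, |b|/\delta \to L$ and $\cos \theta \to 1$, so the unit directions of $a$ and $b$ agree in the limit.

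For $t_0 \in (0, 1)$ I would close the argument by reflection. Pick $t_k, s_k \to t_0^+$ realizing $v_1, v_2$ with the ratio $(t_k - t_0)/(s_k - t_0)$ bounded in $(0, \infty)$, which is arrangeable after passing to subsequences using continuity of the difference-quotient function $V$ and the connectedness of its limit set (a closed connected subset of the $L$-circle). Applying the parallelism to the symmetric pair $(2t_0 - t_k, t_k)$ yields $(\alpha(t_0) - \alpha(2t_0 - t_k))/(t_k - t_0) \to v_1$. Then applying Lemma~\ref{abdiff} and the parallelism to the cross pair $(2t_0 - t_k, s_k)$ (whose widths are comparable by construction) forces the direction of $\alpha(s_k) - \alpha(t_0)$ to equal the direction of $\alpha(t_0) - \alpha(2t_0 - t_k)$ in the limit, i.e., $v_2 = v_1$, a contradiction. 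The case of two left-side limits is symmetric, and mixed left/right cases are already covered by the same symmetric-/cross-pair scheme. The norm identity $|\alpha'(t_0)|_g = L$ then follows directly from Lemma~\ref{abdiff}.

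The main obstacle is the endpoint $t_0 = 0$, where no reflection is available. I would handle it by bootstrapping from the interior differentiability just established: by absolute continuity, $\alpha(t) - \alpha(0) = \int_0^t \alpha'(\tau) \, d\tau$, and $(\alpha(t) - \alpha(0))/t$ is an average of norm-$L$ vectors whose norm tends to $L$ by Lemma~\ref{abdiff}. Strict convexity of the Euclidean ball then forces the unit direction $\bar v_t/|\bar v_t|$ to have a limit $\hat e$ as $t \to 0^+$; indeed, comparing $\bar v_t$ and $\bar v_{t'}$ on nested intervals $[0, t] \subset [0, t']$ with $t/t'$ bounded below, via the same parallelism inequality applied to the chord $\alpha(t') - \alpha(0)$ split at $\alpha(t)$, rules out distinct subsequential directions at comparable scales. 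Iterating along a geometric sequence of scales then upgrades this to convergence along any sequence, so $(\alpha(t) - \alpha(0))/t \to L \hat e$, giving right-differentiability at $0$ with $|\alpha'(0)|_g = L$ and completing the proof.
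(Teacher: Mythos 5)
Your approach departs from the paper's in a crucial way: you try to deduce differentiability from the conclusion of Lemma~\ref{abdiff} alone (norms of all difference quotients tend to $L_g(\alpha)$) together with soft geometry (the law-of-cosines ``parallelism'' estimate) and soft topology (connectedness of the cluster set). The paper instead relies essentially on Lemma~\ref{circlesinterior}: for a fixed disc radius $r_1$ independent of the scale $|z_k-t_0|$, the lens bounded by the two circles of radius $r_1$ through $\alpha(t_0)$ and $\alpha(z_k)$ traps $\alpha(y_k)$ for \emph{every} $y_k$ between $t_0$ and $z_k$, forcing the angular spread of intermediate points to shrink linearly in $|\alpha(z_k)-\alpha(t_0)|/r_1$. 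That is a constraint that operates uniformly across all intermediate scales, and it is what kills the oscillation.

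Your replacement does not close the same gap. The parallelism estimate
$|a||b|(1-\cos\theta)=o((q-p)^2)$
only forces $\cos\theta\to1$ when $q-t_0$ and $t_0-p$ are comparable; if one is much smaller than the other the estimate gives nothing. You acknowledge this and try to patch it by (i) asserting that distinct right-side limits $v_1\ne v_2$ are ``arrangeable'' along sequences with bounded ratio, and (ii) bootstrapping from comparable scales to all scales by iterating along a geometric sequence. Neither step works. For (i), connectedness of the cluster set does not produce sequences with bounded ratio realizing two prescribed distinct limits: take $\alpha'(t_0+\delta)$ a unit vector at angle $\sin(\log\log(1/\delta))$ (and its odd reflection for $\delta<0$); this curve is Lipschitz, satisfies the full conclusion of Lemma~\ref{abdiff} (you can check both the same-side and straddling cases), has a connected, nondegenerate arc of subsequential limits at $t_0$, and any two points $v_1\ne v_2$ on that arc are only realized at wildly incomparable scales, precisely because the direction at comparable scales is nearly constant. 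For (ii), knowing that the direction at scale $\delta$ and at scale $2\delta$ differ by $\epsilon(\delta)\to0$ does not prevent $\sum_k\epsilon(2^{-k})$ from diverging — that is exactly what happens in the example, so iterating geometric scales cannot upgrade scale-local near-constancy to existence of a limit. The strict-convexity observation has the same defect: it yields concentration of $\alpha'$ around the average direction at each scale, not convergence of that average direction. In short, a curve can satisfy all of your inputs and still fail to be differentiable; the paper's Lemma~\ref{circlesinterior} supplies the uniform-in-scale constraint that you would need, and without it (or something playing the same role) the proof does not go through.
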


\begin{proof}
We first prove that $\alpha$ is right-differentiable over $[0,1)$.
Let $t_0$ be in $[0,1)$ and suppose that $\alpha$ is not right-differentiable at $t_0$.
It follows from Lemma \ref{abdiff} that there is a positive constant $c$ and sequences $\{ y_k \}$ and $\{ z_k \}$ in $(t_0,1)$ converging to $t_0$ such that for all $k$, the points $\alpha(t_0), \alpha(y_k), \alpha(z_k)$ are distinct, and the interior angle at $\alpha(t_0)$ of the triangle with vertices at these points is at least $c$.
By passing to a subsequence we may assume that $y_k < z_k$ for all $k$.
Let $x_\alpha$ and $y_\alpha$ be the component functions of $\alpha$.
Let $C>0$ be a large constant.
Fix a positive constant $r_1$ with
\[
	C r_1 < \min \Big(x_\alpha(t_0), y_\alpha(t_0) \Big)
\]
For large $k$,
\[
	0 < | \alpha(z_k) - \alpha(t_0) | < r_1
\]
Then there are two closed discs of radius $r_1$ which contain $\alpha(z_k)$ and $\alpha(t_0)$ on their boundaries.
If $C$ and $k$ are large, then by Lemma \ref{circlesinterior}, the point $\alpha(y_k)$ must be in the intersection of these discs.
But this implies that the interior angle at $\alpha(t_0)$ of the triangle with vertices at $\alpha(t_0), \alpha(y_k), \alpha(z_k)$ converges to zero as $k$ tends to infinity.
By this contradiction $\alpha$ is right-differentiable over $[0,1)$.

A symmetric argument shows that $\alpha$ is left-differentiable over $(0,1)$.
Then Lemma \ref{abdiff} implies that the left and right derivatives must agree over $(0,1)$ and $| \alpha'(t) |_g = L_g(\alpha)$ for every $t$ in $[0,1)$.
\end{proof}

The following lemma shows that an eigenvalue maximizing curve in $\cR_g^*$ is in $\cC$.

\begin{Lemma}
\label{c1}
Fix $p \ge 2n-1$.
Let $\alpha$ be a curve in $\cR_g^*$ such that $\lambda_{1,p}(\alpha) = \Lambda_p$.
Then $\alpha$ is in $\cC$.
\end{Lemma}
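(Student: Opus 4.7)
My plan is to upgrade the conclusion of Lemma~\ref{diff} (differentiability on $[0,1)$ with $|\alpha'(t)|_g=L_g(\alpha)$) to membership in $\cC$. The conditions $\alpha(0)=(x_0,y_0)$, $\alpha(t)$ in the interior for $t<1$, and $\alpha(1)$ on the boundary of $\R^{2n}/G$ hold because $\alpha\in\cR_g$. The relations $u\circ\alpha(1)>0$ (from $\cR_g^*$) and $v\circ\alpha(1)=0$ force $\alpha(1)=(x_*,0)$ with $x_*>0$, so the boundary intersection is away from the origin. What remains is (i) continuity of $\alpha'$ on $[0,1)$, (ii) existence of $\alpha'(1)$ with $|\alpha'(1)|_g=L_g(\alpha)$, and (iii) orthogonality of $\alpha'(1)$ to the $x$-axis.

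For (i) I would argue by contradiction using Lemma~\ref{circlesinterior}. Suppose $\alpha'$ is discontinuous at $t_0\in[0,1)$, so there exists $t_k\to t_0$ with $\alpha'(t_k)\to v$, $|v|_g=L_g(\alpha)$, $v\neq\alpha'(t_0)$. Simplicity of $\alpha$ excludes $v=-\alpha'(t_0)$, so $v-\alpha'(t_0)$ has a nonzero component normal to $\alpha'(t_0)$. For small $r>0$, let $D$ be the disc of radius $r$ tangent at $\alpha(t_0)$ to the line through $\alpha(t_0)$ with direction $\alpha'(t_0)$, picked on the side that this normal component points toward. Pointwise differentiability at $t_0$ keeps $\alpha(s)$ outside $D$ for $s$ near $t_0$ with $s\neq t_0$, because the second-order gap between the tangent line and $\partial D$ dominates the $o(|s-t_0|)$ error. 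Pointwise differentiability at $t_k$, for $k$ large, pushes $\alpha$ into $D$ just past $t_k$. Hence there exist $s_1<s_2$ in $(t_0,1)$ with $\alpha(s_i)\in\partial D$ but $\alpha$ outside $D$ on an interval immediately to the right of $t_0$. Choosing $r$ small so that $|\alpha(t)-\mathrm{center}(D)|<2r$ on $[t_0,s_2]$, Lemma~\ref{circlesinterior} applied to the pair $(t_0,s_2)$ forces $\alpha([t_0,s_2])\subset D$, contradicting the excursion on $(t_0,s_1)$.

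For (ii) and (iii) I would use the monotonicity of $u\circ\alpha$ (nondecreasing) and $v\circ\alpha$ (nonincreasing) from Lemmas~\ref{rumono} and~\ref{gex}: every subsequential limit $(a,b)$ of $\alpha'(t)$ as $t\nearrow 1$ satisfies $a\ge 0$, $b\le 0$, and $a^2+b^2=L^2$ where $L=L_g(\alpha)$. It suffices to show $a=0$ for every such limit. Suppose some subsequential limit has $a>0$; pick $c\in(\max(|b|/L,\,1/2),\,1)$, a nonempty interval since $a>0$ gives $|b|<L$. For small $\delta>0$, let $D$ be the disc of radius $r=c\delta$ centered at $(x_*-\delta,0)$. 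Using the approximation $\alpha(1-\varepsilon)\approx(x_*,0)-\varepsilon(a,b)$ (valid along the subsequence via pointwise differentiability at $t_k$) the quadratic inequality $\varepsilon^2L^2-2\delta a\,\varepsilon+(\delta^2-r^2)\le 0$ has two positive roots, and one verifies that $\alpha$ crosses $\partial D$ at some $t_1<1$, lies in $D$ on an interval $[t_1,t_2]$, exits at $t_2<1$, and $\alpha(1)\notin D$ since $\delta>r$. The choice $c>1/2$ makes $|\alpha(t)-(x_*-\delta,0)|<2r$ on $[t_1,1]$, and $Cr\le x_*-\delta$ for $\delta$ small. Lemma~\ref{circlesendpoint} then forces $\alpha([t_1,1])\subset D$, contradicting $\alpha(1)\notin D$. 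Hence $a=0$ for every subsequential limit, so the unique limit exists and equals $(0,-L)$, giving both (ii) and (iii).

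The main obstacle is step (i): inferring $C^1$ regularity from pointwise differentiability alone, without a direct curvature estimate, requires one to calibrate the disc radius $r$ carefully, small enough that the second-order bending away from the disc dominates the local $o(|s-t_0|)$ error at $t_0$ and that the hypothesis $|\alpha(t)-\mathrm{center}|<2r$ holds on $[t_0,s_2]$, yet large enough that the direction $v$ at $t_k$ is genuinely detected before the $k$-dependent differentiability error at $t_k$ can mask it. Step (iii) is conceptually similar but additionally requires the monotonicity of $u\circ\alpha$ and $v\circ\alpha$ to control possible oscillation of $\alpha'$ as $t\nearrow 1$.
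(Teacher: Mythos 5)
The overall structure is right: you correctly see that the task is to upgrade the pointwise constant-speed differentiability of Lemma~\ref{diff} to $C^1$ up to the boundary, and that Lemmas~\ref{circlesinterior} and~\ref{circlesendpoint} are the tools. However, step~(i) has a genuine gap, which you yourself flag.

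You place a disc $D$ of fixed small radius $r$ tangent at $\alpha(t_0)$ to the tangent line and assert that pointwise differentiability at $t_0$ keeps $\alpha(s)$ outside $D$ for $s$ near $t_0$, ``because the second-order gap between the tangent line and $\del D$ dominates the $o(|s-t_0|)$ error.'' This is false. The gap between the tangent line and $\del D$ at chord-distance $d$ from the tangent point is $\approx d^2/(2r)$, which is quadratic in $|s-t_0|$; the differentiability error is merely $o(|s-t_0|)$, i.e.\ linear times a factor tending to zero. The quadratic term goes to zero strictly faster than the linear one as $s\to t_0$, so it does \emph{not} dominate: the $o(|s-t_0|)$ remainder can push $\alpha(s)$ into $D$. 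No fixed $r$ evades this, and letting $r$ shrink with $|s-t_0|$ only makes the quadratic bending smaller while jeopardizing the hypothesis $|\alpha(t)-\mathrm{center}|<2r$ in Lemma~\ref{circlesinterior}. You acknowledge the tension in your closing paragraph, but the calibration you gesture at does not exist; one would need an $O(|s-t_0|^2)$ bound on the error, i.e.\ a curvature estimate, which is precisely what is not available.

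The paper avoids this entirely by choosing the discs differently. For each $s_k$ near $t_0$ it takes the \emph{two} discs of fixed radius $r_1$ passing through \emph{both} $\alpha(t_0)$ and $\alpha(s_k)$. Lemma~\ref{circlesinterior} traps $\alpha$ between $t_0$ and $s_k$ in the lens-shaped intersection. The opening angle of the lens at its vertices shrinks to zero as $|\alpha(s_k)-\alpha(t_0)|\to 0$, so the derivatives $\alpha'(t_0)$ and $\alpha'(s_k)$, which exist with magnitude $L_g(\alpha)$ by Lemma~\ref{diff} and must point into the lens, are both forced into a narrowing cone around the chord direction, giving $\alpha'(s_k)\to\alpha'(t_0)$ with no quadratic-versus-linear comparison. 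The same trick at $t=1$ with Lemma~\ref{circlesendpoint} and the two half-discs through $\alpha(t_1)$ centered on the axis yields $\alpha'(t)\to(0,-L_g(\alpha))$. Your steps~(ii)/(iii) are closer in spirit but rely on the approximation $\alpha(1-\varepsilon)\approx(x_*,0)-\varepsilon(a,b)$ over a range of $\varepsilon$, when you only know it along the subsequence $t_k$; you also invoke monotonicity of $u\circ\alpha$ and $v\circ\alpha$, which Lemma~\ref{gex} provides for the particular maximizer it constructs but which is not part of the definition of $\cR_g^*$, and which the paper's argument does not need.
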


\begin{proof}
Note that $\alpha$ is differentiable over $[0,1)$ and $| \alpha'(t) |_g = L_g(\alpha)$ for every $t$ in $[0,1)$ by Lemma \ref{diff}.
In order to show that $\alpha$ is continuously differentiable over $[0,1)$, fix $t_0$ in $[0,1)$ and let $\{ s_k \}$ be a sequence in $[0,1)$ converging to $t_0$.
Let $x_\alpha$ and $y_\alpha$ be the component functions of $\alpha$.
Let $C>0$ be a large constant.
Let $r_1>0$ be such that
\[
	C r_1 < \min\Big( x_\alpha(t_0), y_\alpha(t_0) \Big)
\]
For large $k$, there are exactly two closed discs in $\R^{2n}/G$ of radius $r_1$ which contain $\alpha(s_k)$ and $\alpha(t_0)$ on their boundaries.
If $k$ is large, then Lemma \ref{circlesinterior} implies that $\alpha(t)$ must lie in the intersection of these discs for all $t$ between $t_0$ and $s_k$.
Since $\alpha$ is differentiable over $[0,1)$, and $| \alpha'(t) |_g = L_g(\alpha)$ for all $t$ in $[0,1)$, it follows that
\[
	\lim_{k \to \infty} | \alpha'(s_k) - \alpha'(t_0) | = 0
\]
Therefore $\alpha'$ is continuous at $t_0$.
This proves that $\alpha$ is continuously differentiable over $[0,1)$.

Fix $t_1$ in $[0,1)$.
Let $C>0$ be a large constant.
Let $r_2>0$ be such that $C r_2 < x_\alpha(t_1)$.
If $t_1$ is close to $1$, then there are exactly two closed half-discs in $\R^{2n}/G$ of radius $r_1$ which are centered on the boundary of $\R^{2n}/G$ and contain $\alpha(t_1)$ on their boundaries.
Lemma \ref{circlesendpoint} implies that $\alpha(t)$ must lie in the intersection of these discs for all $t$ between $t_1$ and $1$.
Since $\alpha$ is differentiable over $[0,1]$, and $| \alpha'(t) |_g = L_g(\alpha)$ for all $t$ in $[0,1]$, it follows that
\[
	\lim_{t \to 1} \alpha'(t) = \Big(0, -L_g(\alpha) \Big)
\]
This implies that $\alpha$ is continuously differentiable over $[0,1]$ and $\alpha'(1)=(0,-L_g(\alpha))$.
Therefore $\alpha$ is in $\cC$.
\end{proof}

We can now prove Lemma \ref{exreg}.

\begin{proof}[Proof of Lemma 2.1]
If $x_0 \neq y_0$ or $\Lambda_p > \lambda_{1,p}(\sigma)$, then by Lemma \ref{gex}, there is a curve $\alpha$ in $\cR_g^*$ such that $\lambda_{1,p}(\alpha) = \Lambda_p$.
In particular $\alpha$ is simple.
Moreover $\alpha$ is in $\cC$ by Lemma \ref{c1}.
\end{proof}

\section{Simons' cone}

In this section we conclude the article by proving Lemma \ref{lemnocone}.
By a scaling argument, it suffices to consider the case $x_0=y_0=1$.
We assume that $x_0=y_0=1$ throughout this section.
Define a function $\sigma_0: [0,1] \to \R^{2n}/G$ by
\[
\label{sigma0}
	\sigma_0(t) = (1-t)^{1/2} \cdot (1,1)
\]
Note that
\[
	\lambda_{1,p}(\sigma_0) = \inf \left \{ \frac{ 2^{p/2} \int_0^1 | w' |^p (1-t)^{n+\frac{p}{2}-\frac{3}{2}} \,dt}{\int_0^1 | w |^p (1-t)^{n-\frac{3}{2}} \,dt} : w \in \Lip_0([0,1]) \right \}
\]
The proof of Lemma \ref{liplemma} shows that it is equivalent to take the infimum over functions $w$ in $\Lip_{0,\operatorname{loc}}([0,1))$ such that $\int_0^1 | w |^p (1-t)^{n-\frac{3}{2}} \,dt$ is finite.
Also $\lambda_{1,p}(\sigma) = \lambda_{1,p}(\sigma_0)$.
To verify this, let $w$ be a function in $\Lip_{0,\operatorname{loc}}([0,1))$ and let $z$ be a function in $\Lip_0([0,1])$.
Assume that $z(t)=w(1-(1-t)^2)$ for all $t$.
Then
\[
	\frac{ 2^{p/2} \int_0^1 | w'(t) |^p (1-t)^{n+\frac{p}{2}-\frac{3}{2}} \,dt}{\int_0^1 | w(t) |^p (1-t)^{n-\frac{3}{2}} \,dt} = \frac{2^{-p/2} \int_0^1 | z'(t) |^p (1-t)^{2n-2} \,dt}{\int_0^1 |z(t)|^p (1-t)^{2n-2} \,dt}
\]
This implies that $\lambda_{1,p}(\sigma) = \lambda_{1,p}(\sigma_0)$.

Recall the coordinate functions $u$ and $v$, defined in \eqref{ucoord} and \eqref{vcoord}.
For all $t$ in $[0,1]$,
\[
	\Big( u \circ \sigma_0(t), v \circ \sigma_0(t) \Big) = (0, 1-t)
\]
For $\nu$ in $\R$, let $J_\nu$ denote the Bessel function of the first kind of order $\nu$.
Let $j_{\nu,1}$ denote the first positive root of $J_\nu$.
Define a function $\phi_\sigma$ in $\Lip_0([0,1])$ by
\[
	\phi_\sigma(t) = (1-t)^{\frac{3-2n}{4}} J_{n-\frac{3}{2}} ( j_{n-\frac{3}{2},1} \sqrt{1-t})
\]
In the following lemma, we express the eigenvalues $\lambda_{1,p}(\sigma)$ in terms of the eigenvalues of a unit ball in $\R^{2n-1}$.

\begin{Lemma}
\label{conesol}
Let $B_{2n-1}$ be the unit ball in $\R^{2n-1}$.
For all $p$,
\[
\label{conesol1}
	\lambda_{1,p}(\sigma) = 2^{-p/2} \lambda_{1,p}(B_{2n-1})
\]
For the case $p=2$,
\[
\label{conesol2}
	\lambda_{1,2}(\sigma) = \frac{ 2 \int_0^1 | \phi_\sigma' |^2 (1-t)^{n-\frac{1}{2}} \,dt}{\int_0^1 | \phi_\sigma |^2 (1-t)^{n-\frac{3}{2}} \,dt} = \frac{1}{2} \cdot j_{n-\frac{3}{2},1}^2
\]
\end{Lemma}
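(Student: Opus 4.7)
The plan is to identify the Rayleigh quotient defining $\lambda_{1,p}(\sigma_0)$ with (a constant multiple of) the radial Rayleigh quotient for the first Dirichlet $p$-Laplace eigenvalue on the unit ball $B_{2n-1} \subset \R^{2n-1}$, via the explicit change of variables $r = \sqrt{1-t}$. By simplicity of $\lambda_{1,p}(B_{2n-1})$ \cite{Li} together with the $O(2n-1)$-invariance of the ball, any positive first eigenfunction there must be radial, and hence $\lambda_{1,p}(B_{2n-1})$ coincides with the infimum of the one-dimensional weighted quotient
\[
	\frac{\int_0^1 |\psi'(r)|^p \, r^{2n-2} \, dr}{\int_0^1 |\psi(r)|^p \, r^{2n-2} \, dr}
\]
over admissible $\psi$ vanishing at $r = 1$.

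For \eqref{conesol1}, given $w$ in the admissible class for $\lambda_{1,p}(\sigma_0)$, I would set $\psi(r) = w(1-r^2)$, so that $\psi'(r) = -2r\,w'(1-r^2)$. Substituting $t = 1-r^2$, $dt = -2r\,dr$, the numerator integrand $2^{p/2} |w'(t)|^p (1-t)^{n+p/2-3/2}\,dt$ becomes $2^{1-p/2} |\psi'(r)|^p r^{2n-2}\,dr$, while the denominator integrand $|w(t)|^p (1-t)^{n-3/2}\,dt$ becomes $2\, |\psi(r)|^p r^{2n-2}\,dr$. The prefactors produce the overall $2^{-p/2}$, and since $w \leftrightarrow \psi$ is a bijection between the relevant weighted function classes, \eqref{conesol1} follows. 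For \eqref{conesol2}, I would invoke the classical closed form for the first radial Dirichlet eigenfunction of $-\Delta$ on the unit ball $B_N \subset \R^N$: up to normalization, $\psi(r) = r^{-(N-2)/2} J_{(N-2)/2}(j_{(N-2)/2,1}\, r)$ with eigenvalue $j_{(N-2)/2,1}^2$. Taking $N = 2n-1$ gives index $n - \tfrac{3}{2}$, and the substitution $r = \sqrt{1-t}$ turns $\psi$ into precisely $\phi_\sigma$. Combining with \eqref{conesol1} yields $\lambda_{1,2}(\sigma) = \tfrac{1}{2} j_{n-3/2,1}^2$, and the middle expression in \eqref{conesol2} is then simply the Rayleigh quotient of $\phi_\sigma$.

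The computation is essentially routine. The only mild subtlety is that $\phi_\sigma$ is only locally Lipschitz on $[0,1)$, owing to the factor $(1-t)^{(3-2n)/4}$, but this class of test functions is permitted by the remark immediately preceding the lemma, which extends the variational characterization via Lemma \ref{liplemma}. The near-zero asymptotic $J_{n-3/2}(z) \sim z^{n-3/2}$ ensures the requisite weighted $L^p$ integrability, so no further obstacle arises beyond bookkeeping the powers of $2$ and matching the function classes.
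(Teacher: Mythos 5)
Your proposal is correct and follows essentially the same route as the paper: both use the substitution $t = 1-r^2$ to transform the $\sigma_0$ Rayleigh quotient into $2^{-p/2}$ times the radial Rayleigh quotient on $B_{2n-1}$, appeal to simplicity to reduce the ball eigenvalue problem to the radial case, and for $p=2$ read off $\lambda_{1,2}(B_{2n-1}) = j_{n-\frac{3}{2},1}^2$ with the Bessel eigenfunction $|x|^{\frac{3}{2}-n} J_{n-\frac{3}{2}}(j_{n-\frac{3}{2},1}|x|)$ pulling back to $\phi_\sigma$. (The paper writes the substitution as $v(x) = w((1-|x|)^2)$, but this is a typo for $v(x) = w(1-|x|^2)$; your version is the one that actually makes the weights match and that turns $\phi_\sigma$ into the radial Bessel eigenfunction.)
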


\begin{proof}
Let $w$ be a function in $\Lip_{0,\operatorname{loc}}([0,1))$ and let $v$ be a radial function in $\Lip_0(B_{2n-1})$.
Assume that for all $x$ in $B_{2n-1}$,
\[
	v(x)=w\Big((1-|x|)^2\Big)
\]
For all $p$,
\[
\label{conesol3}
	\frac{ 2^{p/2} \int_0^1 | w' |^p (1-t)^{n+\frac{p}{2}-\frac{3}{2}} \,dt}{\int_0^1 | w |^p (1-t)^{n-\frac{3}{2}} \,dt} = 2^{-\frac{p}{2}} \cdot \frac{\int_B | \grad v |^p}{\int_B |v|^p}
\]
Therefore \eqref{conesol1} follows.
For the case $p=2$, it is a classical fact that $\lambda_{1,2}(B_{2n-1}) = j_{n-\frac{3}{2},1}^2$ and that the Rayleigh quotients in \eqref{conesol3} are minimized when
\[
	v(x) = |x|^{\frac{3}{2}-n} J_{n-\frac{3}{2}}(j_{n-\frac{3}{2},1} |x| )
\]
That is, the quotient is minimized when $w=\phi_\sigma$.
Therefore \eqref{conesol2} holds.
\end{proof}

In the next lemma, we show that if $n$ is fixed and $p$ is large, then $\lambda_{1,p}(\sigma) > \Lambda_p$.
The proof is a simple application of a result of Juutinen, Lindqvist, and Manfredi \cite[Lemma 1.5]{JLM}.
They showed that if $\Omega$ is a smoothly bounded domain in $\R^d$, and if $\operatorname{inrad}(\Omega)$ is the inradius of $\Omega$, then
\[
	\lim_{p \to \infty} \Big( \lambda_{1,p}(\Omega) \Big)^{1/p} = \frac{1}{\operatorname{inrad}(\Omega)}
\]
In particular, if $d \ge 1$ is an integer and $B_d$ is a unit ball in $\R^d$, then
\[
\label{plim}
	\lim_{p \to \infty} \Big( \lambda_{1,p}(B_d) \Big)^{1/p} = 1
\]

\begin{Lemma}
\label{5bigp}
Fix $n \ge 2$.
If $p$ is large, then there is a curve $\alpha$ in $\cC$ such that $\lambda_{1,p}(\alpha) > \lambda_{1,p}(\sigma)$.
\end{Lemma}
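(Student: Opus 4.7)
The plan is to exhibit a specific curve $\alpha\in\cC$ whose one-dimensional Rayleigh quotient I can identify with the Dirichlet $p$-Laplace eigenvalue of a unit ball, and then compare asymptotics using \eqref{plim} and Lemma \ref{conesol}. Since Lemma \ref{conesol} already yields $\lambda_{1,p}(\sigma)^{1/p}\to 2^{-1/2}$ as $p\to\infty$, any curve whose eigenvalue satisfies $\lambda_{1,p}(\alpha)^{1/p}\to c>2^{-1/2}$ suffices.

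With $x_0=y_0=1$, I would take the horizontal segment $\alpha:[0,1]\to\R^{2n}/G$ defined by $\alpha(t)=(1,1-t)$. Verifying $\alpha\in\cC$ is direct: $\alpha$ is smooth with $\alpha(0)=(1,1)=(x_0,y_0)$, stays in the interior of $\R^{2n}/G$ for $t\in[0,1)$, meets the $x$-axis at $(1,0)$ away from the origin, satisfies $|\alpha'|_g=1$, and $\alpha'(1)=(0,-1)$ is orthogonal to that boundary component. Since $F_\alpha(t)=c_n(1-t)^{n-1}$ and $|\alpha'|_g=1$, the constant $c_n$ cancels in \eqref{rgrq} and
\[
\lambda_{1,p}(\alpha)=\inf_{w\in\Lip_0([0,1])}\frac{\int_0^1|w'|^p(1-t)^{n-1}\,dt}{\int_0^1|w|^p(1-t)^{n-1}\,dt}.
\]
The radial substitution $v(x)=w(1-|x|)$ on the unit ball $B_n\subset\R^n$ turns this quotient into the standard Dirichlet Rayleigh quotient of $v$ on $B_n$ (both numerator and denominator acquire the same factor of $n\omega_n$). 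Since the first $p$-Laplace eigenfunction on $B_n$ is radial (by Lindqvist's simplicity together with the $O(n)$-symmetry, or directly by Polya–Szego), the infimum over radial $v$ equals $\lambda_{1,p}(B_n)$, hence $\lambda_{1,p}(\alpha)=\lambda_{1,p}(B_n)$.

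To conclude, \eqref{plim} gives $\lambda_{1,p}(\alpha)^{1/p}=\lambda_{1,p}(B_n)^{1/p}\to 1$, while Lemma \ref{conesol} together with \eqref{plim} gives $\lambda_{1,p}(\sigma)^{1/p}=2^{-1/2}\lambda_{1,p}(B_{2n-1})^{1/p}\to 2^{-1/2}$. Since $1>2^{-1/2}$, we obtain $\lambda_{1,p}(\alpha)>\lambda_{1,p}(\sigma)$ for all $p$ sufficiently large, proving the lemma.

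The argument is essentially routine once the correct test curve is identified; there is no real analytic obstacle. The only minor point requiring care is the identification $\lambda_{1,p}(\alpha)=\lambda_{1,p}(B_n)$, which requires radiality of the first eigenfunction. Geometrically, the gain over Simons' cone comes from the fact that the horizontal segment has $g$-length $1$ rather than $\sqrt{2}$, and by \eqref{eqjlm} this shrinking of the intrinsic inradius of the corresponding hypersurface is what dominates the eigenvalue asymptotics as $p\to\infty$.
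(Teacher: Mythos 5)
Your proposal is correct and follows essentially the same approach as the paper: same test curve $\alpha(t)=(1,1-t)$, same identification $\lambda_{1,p}(\alpha)=\lambda_{1,p}(B_n)$, same use of \eqref{plim} and Lemma \ref{conesol} to compare $p$-th roots in the limit. The only cosmetic difference is that you establish $\lambda_{1,p}(\alpha)=\lambda_{1,p}(B_n)$ by a direct change of variables in the one-dimensional Rayleigh quotient plus radiality of the first eigenfunction on $B_n$, whereas the paper first observes that the corresponding hypersurface is isometric to $B_n\times\S^{n-1}$ and then invokes Lindqvist's simplicity to reduce to $B_n$; both rest on the same symmetry fact.
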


\begin{proof}
By \eqref{plim} and Lemma \ref{conesol},
\[
\label{sigmalim}
	\lim_{p \to \infty} \Big( \lambda_{1,p}(\sigma) \Big)^{1/p} = 2^{-1/2}
\]
Define a curve $\alpha$ in $\cC$ by $\alpha(t) = (1,1-t)$.
Let $B_n$ be a unit ball in $\R^n$. 
The hypersurface in $\R^{2n}$ corresponding to $\alpha$ is isometric to $B_n \times \S^{n-1}$.
Moreover $\lambda_{1,p}(B_n \times \S^{n-1}) = \lambda_{1,p}(B_n)$, because the simplicity of the first eigenvalue on $B_n \times \S^{n-1}$ implies that the first eigenfunction is invariant under symmetries.
Simplicity of the first eigenvalue was established by Lindqvist~\cite{Li}.
In particular, $\lambda_{1,p}(\alpha) = \lambda_{1,p}(B_n)$.
Therefore, by \eqref{plim},
\[
\label{alphalim}
	\lim_{p \to \infty} \Big( \lambda_{1,p}(\alpha) \Big)^{1/p} = 1
\]
Now \eqref{sigmalim} and \eqref{alphalim} imply that $\lambda_{1,p}(\sigma) < \lambda_{1,p}(\alpha)$, for large $p$.
\end{proof}

We use a variational argument for the case $p=2$. 
For each $s > 0$, define a curve $\sigma_s$ in $\cC$ such that for all $t$ in $[0,1]$,
\[
	\Big( u \circ \sigma_s(t) , v \circ \sigma_s(t) \Big) = (st, 1-t)
\]
Note that for the case $s=0$, we recover the curve $\sigma_0$ defined in \eqref{sigma0}.
For each $s \ge 0$ define functions $P_s:[0,1) \to \R$ and $Q_s:[0,1) \to \R$ by
\[
	P_s(t) = \frac{2 (1-t)^{n-1}((1-t)^2 + s^2 t^2)^{1/4}}{(1+s^2)^{1/2}}
\]
and
\[
	Q_s(t) = \frac{(1-t)^{n-1} (1+s^2)^{1/2}}{((1-t)^2+s^2t^2)^{1/4}}
\]
For each $s \ge 0$,
\[
	\lambda_{1,2}(\sigma_s) = \min \left \{ \frac{\int_0^1 |w'(t)|^2 P_s(t) \,dt}{\int_0^1 |w(t)|^2 Q_s(t) \,dt} : w \in \Lip_0([0,1]) \right \}
\]
For each $s>0$, let $\phi_s$ be the eigenfunction in $\Lip_{0,\operatorname{loc}}([0,1))$ corresponding to $\lambda_{1,2}(\sigma_s)$, given by Lemma \ref{efex}.
Let $\phi_0$ be a scalar multiple of $\phi_\sigma$.
Then for each $s \ge 0$,
\[
\label{phismin}
	\lambda_{1,2}(\sigma_s) = \frac{ \int_0^1 P_s |\phi_s'|^2 \,dt}{\int_0^1 Q_s |\phi_s|^2 \,dt}
\]
The eigenfunction $\phi_s$ satisfies the associated Euler-Lagrange equation, i.e.
\[
\label{phiseveq}
	-(P_s \phi_s')' = \lambda_{1,2}(\sigma_s) Q_s \phi_s
\]
This equation implies that $\phi_s$ is twice continuously differentiable over $[0,1)$.
Moreover $\phi_s'(0)$ is non-zero.
For each $s \ge 0$, normalize $\phi_s$ so that $\phi_s'(0)=1$.

In the next lemma, we show that $\lambda_{1,2}(\sigma_s)$ depends continuously on $s$.
Note that if $s_1 \ge 0$ and $s_2 >0$, then for all $t$ in $[0,1)$,
\[
\label{pqbounds}
	\frac{P_{s_1}(t)}{P_{s_2}(t)} = \frac{Q_{s_2}(t)}{Q_{s_1}(t)} \le \bigg( \frac{1+s_2^2}{1+s_1^2} \bigg)^{1/2} \cdot \max \bigg(1, \frac{s_1^{1/2}}{s_2^{1/2}} \bigg)
\]

\begin{Lemma}
\label{coneevcont}
The function $s \mapsto \lambda_{1,2}(\sigma_s)$ is continuous over $[0, \infty)$.
\end{Lemma}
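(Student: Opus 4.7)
The plan is to handle continuity at $s_0 > 0$ and at $s_0 = 0$ separately, because the estimate \eqref{pqbounds} behaves very differently in the two regimes.

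For $s_0 > 0$, I would deduce continuity directly from \eqref{pqbounds}. Given $s_1, s_2$ in a small neighborhood of $s_0$ bounded away from $0$, set $M(s_1, s_2) = \big((1+s_2^2)/(1+s_1^2)\big)^{1/2} \max(1, s_1^{1/2}/s_2^{1/2})$, which tends to $1$ as $s_1 \to s_2$. Since \eqref{pqbounds} gives $P_{s_1} \le M P_{s_2}$ and $Q_{s_1} \ge M^{-1} Q_{s_2}$, substituting into the Rayleigh quotient yields $\lambda_{1,2}(\sigma_{s_1}) \le M^2 \lambda_{1,2}(\sigma_{s_2})$; swapping the roles of $s_1$ and $s_2$ gives the reverse estimate, so continuity at $s_0$ follows.

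At $s_0 = 0$, two one-sided bounds are needed. The lower bound comes for free from \eqref{pqbounds} applied with $s_1 = 0$, $s_2 = s > 0$: here $M = (1+s^2)^{1/2}$, so $\lambda_{1,2}(\sigma_0) \le (1+s^2) \lambda_{1,2}(\sigma_s)$, hence $\liminf_{s \to 0^+} \lambda_{1,2}(\sigma_s) \ge \lambda_{1,2}(\sigma_0)$. For the matching upper bound, however, \eqref{pqbounds} with the roles reversed is useless, since the factor $s_1^{1/2}/s_2^{1/2}$ blows up at $s_2 = 0$. I would instead use the explicit eigenfunction $\phi_0$ from Lemma \ref{conesol} as a test function: Lemma \ref{liplemma} gives
\[
\lambda_{1,2}(\sigma_s) \le \frac{\int_0^1 P_s |\phi_0'|^2 \,dt}{\int_0^1 Q_s |\phi_0|^2 \,dt},
\]
and it suffices to show the right-hand side converges to $\lambda_{1,2}(\sigma_0)$ as $s \to 0^+$ by dominated convergence. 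The power series for $J_{n-\frac{3}{2}}(j_{n-\frac{3}{2},1}\sqrt{1-t})$ combines with the prefactor $(1-t)^{(3-2n)/4}$ to show $\phi_0$ is smooth in the variable $1-t$, so $\phi_0$ and $\phi_0'$ are bounded on $[0,1]$. From the explicit formulas one extracts the uniform bounds $P_s(t) \le C(1-t)^{n-1}$ and $Q_s(t) \le C(1-t)^{n-\frac{3}{2}}$ for $s \in [0,1]$, using $((1-t)^2 + s^2 t^2)^{1/4} \ge (1-t)^{1/2}$ to control $Q_s$; both dominating functions are integrable for $n \ge 2$, and $P_s \to P_0$, $Q_s \to Q_0$ pointwise. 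Dominated convergence then yields $\limsup_{s \to 0^+} \lambda_{1,2}(\sigma_s) \le \lambda_{1,2}(\sigma_0)$.

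The main obstacle is the asymmetry of \eqref{pqbounds} at $s = 0$, which forces the separate test function argument for the upper semicontinuity there. That argument works cleanly only because Lemma \ref{conesol} supplies an explicit $\phi_0$ whose smoothness at $t = 1$, inherited from the Bessel expansion in $1 - t$, keeps both integrals finite and uniformly dominated for $s$ near $0$.
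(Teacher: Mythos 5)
Your proof is correct, and at the crucial point $s_0 = 0$ it follows the paper exactly: lower semicontinuity via one application of \eqref{pqbounds} with $s_1 = 0$, and upper semicontinuity by testing with $\phi_0$ and invoking dominated convergence via the uniform bounds on $P_s$, $Q_s$, $\phi_0$, $\phi_0'$ (the paper states these bounds more tersely, but they are the same ones you supply). For $s_0 > 0$ you deviate slightly: you apply \eqref{pqbounds} symmetrically in both directions to sandwich $\lambda_{1,2}(\sigma_{s_1})$ and $\lambda_{1,2}(\sigma_{s_2})$ between multiples $M^{\pm 2}$ of each other, whereas the paper uses \eqref{pqbounds} only for the liminf bound and then reuses the test-function argument $h(s) = \int |\phi_{s_0}'|^2 P_s / \int |\phi_{s_0}|^2 Q_s$ for the limsup, invoking \eqref{pqbounds} again inside to justify $h(s) \to h(s_0)$. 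Your two-sided estimate is a modest streamlining — it avoids any mention of eigenfunctions away from $s_0 = 0$ — but the ingredients are identical, so this is best viewed as the same approach with a cleaner handling of the interior case.
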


\begin{proof}
Fix $s_0 \ge 0$.
The bounds in \eqref{pqbounds} imply that, for all $s>0$,
\[
	\lambda_{1,2}(\sigma_{s_0}) \le \lambda_{1,2}(\sigma_s) \cdot \frac{1+s^2}{1+s_0^2} \cdot \max \Big(1, \frac{s_0}{s} \Big)
\]
In particular,
\[
\label{coneevcont2}
	\lambda_{1,2}(\sigma_{s_0}) \le \liminf_{s \to s_0} \lambda_{1,2}(\sigma_s)
\]
Note that for all $s \ge 0$,
\[
	\lambda_{1,2}(\sigma_s) \le \frac{\int_0^1 | \phi_{s_0}' |^2 P_s(t) \,dt}{\int_0^1 | \phi_{s_0} |^2 Q_s(t) \,dt}
\]
Additionally,
\[
\label{coneevcont3}
	\lim_{s \to s_0} \frac{\int_0^1 | \phi_{s_0}' |^2 P_s(t) \,dt}{\int_0^1 | \phi_{s_0} |^2 Q_s(t) \,dt}
	= \frac{\int_0^1 | \phi_{s_0}' |^2 P_{s_0}(t) \,dt}{\int_0^1 | \phi_{s_0} |^2 Q_{s_0}(t) \,dt}
	= \lambda_{1,2}(\sigma_{s_0})
\]
If $s_0>0$, then \eqref{coneevcont3} follows from \eqref{pqbounds}.
For the case $s_0=0$, note that $\phi_0$ and $\phi_0'$ are bounded and $P_s$ and $Q_s$ are uniformly bounded for small $s$, because $n \ge 2$.
Then \eqref{coneevcont3} follows from Lebesgue's dominated convergence theorem.
Now
\[
\label{coneevcont1}
	\limsup_{s \to s_0} \lambda_{1,2}(\sigma_s) \le \lambda_{1,2}(\sigma_{s_0})
\]
By \eqref{coneevcont2} and \eqref{coneevcont1}, the function $s \mapsto \lambda_{1,2}(\sigma_s)$ is continuous at $s=s_0$.
\end{proof}

Next we show that the eigenfunctions $\phi_s$ converge to $\phi_\sigma$.

\begin{Lemma}
\label{coneefbound}
For all $t$ in $[0,1)$,
\[
	\lim_{s \to 0} \phi_s(t) = \phi_\sigma(t)
\]
and
\[
	\lim_{s \to 0} \phi'_s(t) = \phi'_\sigma(t)
\]
For any $\delta>0$, the convergence in both limits is uniform over $[0,1-\delta]$.
\end{Lemma}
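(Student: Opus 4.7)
My plan is to reduce the convergence $\phi_s \to \phi_\sigma$ and $\phi_s' \to \phi_\sigma'$ to continuous dependence of solutions of a linear second-order ODE on its parameters, applied on compact subsets of $[0,1)$. Setting $\psi_s = P_s \phi_s'$, the Euler-Lagrange equation \eqref{phiseveq} becomes the first-order system
\[
\phi_s' = \frac{\psi_s}{P_s}, \qquad \psi_s' = -\lambda_{1,2}(\sigma_s)\, Q_s \, \phi_s,
\]
with initial conditions $\phi_s(0) = 0$ and $\psi_s(0) = P_s(0) = 2(1+s^2)^{-1/2}$, the latter using the normalization $\phi_s'(0)=1$.

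I would fix $\delta > 0$ and work on $[0,1-\delta]$. Directly from the explicit formulas, $P_s \to P_0$ and $Q_s \to Q_0$ uniformly on $[0,1-\delta]$ as $s \to 0$, and there is $c_\delta>0$ so that $P_s(t) \ge c_\delta$ on $[0,1-\delta]$ for all sufficiently small $s \ge 0$. Combined with Lemma \ref{coneevcont}, which gives $\lambda_{1,2}(\sigma_s) \to \lambda_{1,2}(\sigma_0)$, the hypotheses of standard continuous dependence are in place. Concretely, rewriting the system as the Volterra integral equation
\[
\phi_s(t) = \int_0^t \frac{1}{P_s(u)} \left[ P_s(0) - \lambda_{1,2}(\sigma_s) \int_0^u Q_s(v) \phi_s(v) \, dv \right] du,
\]
one Gronwall estimate gives uniform bounds for $\phi_s$ and $\phi_s'$ on $[0,1-\delta]$ for small $s$, and subtracting the equations at parameter $s$ and at $s=0$, followed by a second Gronwall estimate, produces uniform convergence of $\phi_s$ and $\phi_s'$ on $[0,1-\delta]$ to the unique solution $\tilde\phi$ of the $s=0$ initial value problem.

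It remains to identify $\tilde\phi$ with the correct multiple of $\phi_\sigma$. By Lemma \ref{conesol}, the function $\phi_\sigma$ solves the $s=0$ Euler-Lagrange equation with eigenvalue $\lambda_{1,2}(\sigma_0) = \tfrac{1}{2} j_{n-3/2,1}^2$ and vanishes at $t=0$; uniqueness of solutions of the linear initial value problem then forces $\tilde\phi$ to be the scalar multiple of $\phi_\sigma$ with $\phi'(0)=1$, i.e.\ exactly the $\phi_0$ of the paper's normalization convention. Pointwise convergence on $[0,1)$ is automatic, since any $t_0 < 1$ lies in $[0,1-\delta]$ once $\delta$ is chosen small enough. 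The main technical obstacle is the uniform control of $P_s$ and $Q_s$ as $s \to 0$: both coefficients have non-trivial singular behavior near $t=1$ (the denominator $((1-t)^2+s^2 t^2)^{1/4}$ degenerates in a mixed way depending on the relative size of $1-t$ and $s$), which is precisely why the convergence can only be claimed uniform on compact subsets of $[0,1)$ rather than on $[0,1]$.
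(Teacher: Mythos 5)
Your proposal is correct and fills in the details of the paper's one-line proof, which likewise invokes continuous dependence on parameters of the ODE \eqref{phiseveq} together with Lemma \ref{coneevcont} and the normalization $\phi_s(0)=0$, $\phi_s'(0)=1$. You also correctly note that, strictly speaking, the limit function is $\phi_0$ (the multiple of $\phi_\sigma$ with derivative $1$ at $0$) rather than $\phi_\sigma$ itself, since $\phi_\sigma'(0) = \tfrac{1}{2}j_{n-3/2,1}J_{n-1/2}(j_{n-3/2,1}) \neq 1$ in general; this harmless normalization imprecision in the lemma statement does not affect the later use in Lemma \ref{conemin}, where $\phi_\sigma$ appears only through the sign of a quadratic expression.
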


\begin{proof}
Note that the eigenfunctions $\phi_s$ satisfy \eqref{phiseveq}.
Moreover $\phi_s(0) = 0$, and the functions $\phi_s$ are normalized so that $\phi_s'(0)=1$.
The convergence now follows from Lemma \ref{coneevcont} and continuous dependence on parameters.
\end{proof}

Define $D_-: (0,\infty) \to \R$ to be the lower left Dini derivative of the function $s \mapsto \lambda_{1,2}(\sigma_s)$.
That is, for each $s_0$ in $(0,\infty)$,
\[
	D_-(s_0) = \liminf_{s \nearrow s_0} \frac{\lambda_{1,2}(\sigma_s)-\lambda_{1,2}(\sigma_{s_0})}{s-s_0}
\]
For each $s>0$, define functions $\dot P_s:[0,1] \to \R$ and $\dot Q_s:[0,1] \to \R$ by
\[
	\dot P_s(t) = \frac{st^2 (1-t)^{n-1}}{((1-t)^2+s^2t^2)^{3/4}(1+s^2)^{1/2}} - \frac{2s(1-t)^{n-1}((1-t)^2+s^2t^2)^{1/4}}{(1+s^2)^{3/2}}
\]
and
\[
	\dot Q_s(t) = \frac{s (1-t)^{n-1}}{((1-t)^2+s^2t^2)^{1/4}(1+s^2)^{1/2}} - \frac{st^2(1-t)^{n-1}(1+s^2)^{1/2}}{2((1-t)^2+s^2t^2)^{5/4}}
\]
The following lemma establishes a lower bound for $D_-(s)$.

\begin{Lemma}
\label{conedini}
If $s>0$ is small, then
\[
	D_-(s) \ge \frac{\int_0^1 |\phi_s'|^2 \dot P_s - \lambda_{1,2}(\sigma_s) |\phi_s|^2 \dot Q_s \,dt}{\int_0^1 |\phi_s|^2 Q_s \,dt}
\]
\end{Lemma}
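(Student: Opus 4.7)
The plan is to use $\phi_s$ as a test function for nearby parameters $\tau < s$ and then identify the resulting difference quotient as a classical derivative of the ``frozen-eigenfunction'' Rayleigh quotient. For each $\tau > 0$ and each admissible $w$, set
\[
R_\tau[w] = \frac{\int_0^1 |w'|^2 P_\tau \,dt}{\int_0^1 |w|^2 Q_\tau \,dt}
\]
The variational characterization gives $\lambda_{1,2}(\sigma_\tau) \le R_\tau[\phi_s]$ for every $\tau > 0$, with equality at $\tau = s$ by \eqref{phismin}. Therefore
\[
\lambda_{1,2}(\sigma_\tau) - \lambda_{1,2}(\sigma_s) \le R_\tau[\phi_s] - R_s[\phi_s]
\]
For $\tau < s$, dividing by the negative quantity $\tau - s$ reverses the inequality, and taking $\liminf$ as $\tau \nearrow s$ yields
\[
D_-(s) \ge \liminf_{\tau \nearrow s} \frac{R_\tau[\phi_s] - R_s[\phi_s]}{\tau - s}
\]

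The next step is to compute the right-hand side as $\frac{d}{d\tau}|_{\tau=s} R_\tau[\phi_s]$. A direct differentiation of the explicit formulas defining $P_\tau$ and $Q_\tau$ verifies that $\dot P_\tau = \partial_\tau P_\tau$ and $\dot Q_\tau = \partial_\tau Q_\tau$. Applying the quotient rule together with the identity $\int_0^1 |\phi_s'|^2 P_s \,dt = \lambda_{1,2}(\sigma_s) \int_0^1 |\phi_s|^2 Q_s \,dt$ from \eqref{phismin}, one obtains
\[
\frac{d}{d\tau}\bigg|_{\tau=s} R_\tau[\phi_s] = \frac{\int_0^1 |\phi_s'|^2 \dot P_s \,dt - \lambda_{1,2}(\sigma_s) \int_0^1 |\phi_s|^2 \dot Q_s \,dt}{\int_0^1 |\phi_s|^2 Q_s \,dt}
\]
which is exactly the asserted lower bound.

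The main obstacle is to justify the exchange of liminf and integral needed to upgrade the difference-quotient inequality to a genuine derivative. For $\tau$ in a small neighbourhood of $s$, with $s > 0$ fixed and bounded away from zero, the bounds in \eqref{pqbounds} show that $P_\tau/P_s$ and $Q_\tau/Q_s$ are uniformly comparable to $1$ in $t$, and the explicit expressions for $\dot P_\tau$ and $\dot Q_\tau$ are dominated pointwise by $P_\tau$ and $Q_\tau$ up to a multiplicative constant depending on $s$ but not on $\tau$. Since $\phi_s$ has finite Rayleigh quotient, the integrands $|\phi_s'|^2 P_s$ and $|\phi_s|^2 Q_s$ are integrable, and Lebesgue's dominated convergence theorem then legalises the passage to the derivative. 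Combining this computation with the difference-quotient inequality above gives the claimed bound on $D_-(s)$.
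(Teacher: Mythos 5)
Your proposal is correct and follows the same overall strategy as the paper: freeze the eigenfunction $\phi_s$ as a test function, observe $\lambda_{1,2}(\sigma_\tau) \le R_\tau[\phi_s]$ with equality at $\tau=s$, and pass to the one-sided difference quotient. Where you differ is the limit passage. The paper avoids a domination argument altogether: it first uses \eqref{pqbounds} to bound $\limsup_{\tau \nearrow s} \int_0^1 |\phi_s|^2 Q_\tau\,dt$, and then observes that for $\tau < s$ small the difference-quotient integrand is \emph{nonnegative} on a neighbourhood $[1-\delta,1]$ of the degenerate endpoint, so Fatou's lemma applies there, with uniform convergence on the compact part $[0,1-\delta]$. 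You instead invoke dominated convergence on the whole interval. This does work: for $s>0$ fixed and $\xi$ in a compact neighbourhood of $s$ bounded away from $0$, one can check directly from the formulas that $|\dot P_\xi(t)| \le C\,P_s(t)$ and $|\dot Q_\xi(t)| \le C\,Q_s(t)$ for all $t \in [0,1)$, so the mean-value theorem gives a dominating function for the difference quotients, and $\int_0^1 |\phi_s'|^2 P_s\,dt$ and $\int_0^1 |\phi_s|^2 Q_s\,dt$ are finite since $\phi_s \in W_0^{1,2}(\sigma_s)$. But note your phrasing slightly misstates what is needed: you want the difference quotients $(P_\tau - P_s)/(\tau - s)$ dominated by a fixed function (essentially $P_s$), not $\dot P_\tau$ dominated by $P_\tau$, and the required comparability between $P_\tau$ and $P_s$ near $t=1$ is exactly what \eqref{pqbounds} supplies. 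Spelling out that uniform comparability would make the domination argument airtight. Both routes prove the lemma; the paper's sign-condition Fatou argument is marginally lighter because it sidesteps verifying the domination near the endpoint.
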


\begin{proof}
Let $s_0>0$ be small.
Define a function $h:(0,\infty) \to \R$ by
\[
	h(s) = \frac{\int_0^1 | \phi_{s_0}' |^2 P_s(t) \,dt}{\int_0^1 | \phi_{s_0} |^2 Q_s(t) \,dt}
\]
Note that $\lambda_{1,2}(\sigma_s) \le h(s)$ for every $s>0$ by Lemma \ref{liplemma}, and $\lambda_{1,2}(\sigma_{s_0}) = h(s_0)$.
Therefore
\[
	D_-(s_0) = \liminf_{s \nearrow s_0} \frac{\lambda_{1,2}(\sigma_s)-\lambda_{1,2}(\sigma_{s_0})}{s-s_0} \ge \liminf_{s \nearrow s_0} \frac{h(s)-h(s_0)}{s-s_0}
\] 
It suffices to show that
\[
\label{conedini1}
	\liminf_{s \nearrow s_0} \frac{h(s)-h(s_0)}{s-s_0} \ge \frac{\int_0^1 |\phi_{s_0}'|^2 \dot P_{s_0} - \lambda_{1,2}(\sigma_{s_0}) |\phi_{s_0}|^2 \dot Q_{s_0} \,dt}{\int_0^1 |\phi_{s_0}|^2 Q_{s_0} \,dt}
\]
Note that
\[
\label{conedini2}
	\frac{h(s)-h(s_0)}{s-s_0} = \frac{\int_0^1 |\phi_{s_0}'|^2 \frac{P_s-P_{s_0}}{s-s_0} - \lambda_{1,2}(\sigma_{s_0}) |\phi_{s_0}|^2 \frac{Q_s-Q_{s_0}}{s-s_0} \,dt}{\int_0^1 |\phi_{s_0}|^2 Q_s \,dt} 
\]
By \eqref{pqbounds},
\[
\label{conedini3}
	\limsup_{s \nearrow s_0} \int_0^1 |\phi_{s_0}|^2 Q_s \,dt \le \int_0^1 |\phi_{s_0}|^2 Q_{s_0} \,dt
\]
Since $s_0$ is small, there is a $\delta>0$ such that if $0<s<s_0$, then $P_s < P_{s_0}$ and $Q_s > Q_{s_0}$ over $[1-\delta,1]$.
Therefore
\[
\label{conedini4}
\begin{split}
	\liminf_{s \nearrow s_0} \int_0^1 |\phi_{s_0}'|^2 \frac{P_s-P_{s_0}}{s-s_0} &- \lambda_{1,2}(\sigma_{s_0}) |\phi_{s_0}|^2 \frac{Q_s-Q_{s_0}}{s-s_0} \,dt \\
		&\ge \int_0^1 |\phi_{s_0}'|^2 \dot P_{s_0} - \lambda_{1,2}(\sigma_{s_0}) |\phi_{s_0}|^2 \dot Q_{s_0} \,dt
\end{split}
\]
To prove this inequality, use the uniform convergence of the integrands over $[0,1-\delta]$ and use Fatou's lemma over $[1-\delta,1]$.
Now \eqref{conedini2}, \eqref{conedini3}, and \eqref{conedini4} imply \eqref{conedini1}, completing the proof.
\end{proof}

Define functions $\ddot P_0:[0,1] \to \R$ and $\ddot Q_0:[0,1] \to \R$ by
\[
	\ddot P_0(t) = t^2 (1-t)^{n-\frac{5}{2}} - 2(1-t)^{n-\frac{1}{2}}
\]
and
\[
	\ddot Q_0(t) = (1-t)^{n-\frac{3}{2}} - \frac{t^2(1-t)^{n-\frac{7}{2}}}{2}
\]
The following lemma gives a sufficient condition to verify that $\lambda_{1,2}(\sigma) < \lambda_{1,2}(\sigma_s)$ for small $s>0$.

\begin{Lemma}
\label{conemin}
Fix $n$ and assume that
\[
\label{conemin0}
	\int_0^1 | \phi_\sigma'|^2 \ddot P_0(t) - \lambda_{1,2}(\sigma) | \phi_\sigma |^2 \ddot Q_0(t) \,dt > 0
\]
If $s$ is small and positive, then $\lambda_{1,2}(\sigma_s) > \lambda_{1,2}(\sigma)$.
\end{Lemma}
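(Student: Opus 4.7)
The plan is to show that $\lambda_{1,2}(\sigma_s) > \lambda_{1,2}(\sigma_0)$ for all small $s>0$ by proving that the lower-left Dini derivative $D_-(s)$ is strictly positive on some interval $(0,s_1]$, and then invoking a standard monotonicity principle. First I would verify, by direct differentiation of the explicit formulas for $P_s$ and $Q_s$, that for each fixed $t\in[0,1)$,
\[
\lim_{s\to 0^+}\frac{\dot P_s(t)}{s} = \ddot P_0(t), \qquad \lim_{s\to 0^+}\frac{\dot Q_s(t)}{s} = \ddot Q_0(t),
\]
which is precisely what motivates the definitions of $\ddot P_0$ and $\ddot Q_0$ just before the lemma.

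Next, combining these pointwise limits with Lemma~\ref{coneefbound} (uniform convergence of $\phi_s$ and $\phi_s'$ to $\phi_\sigma$ and $\phi_\sigma'$ on every compact $[0,1-\delta]$) and Lemma~\ref{coneevcont} (continuity of $s\mapsto\lambda_{1,2}(\sigma_s)$), I would pass to the limit in the integral in the lower bound from Lemma~\ref{conedini} to obtain
\[
\lim_{s\to 0^+}\frac{1}{s}\int_0^1 \Big(|\phi_s'|^2\dot P_s - \lambda_{1,2}(\sigma_s)|\phi_s|^2\dot Q_s\Big)\,dt = \int_0^1 \Big(|\phi_\sigma'|^2\ddot P_0 - \lambda_{1,2}(\sigma)|\phi_\sigma|^2\ddot Q_0\Big)\,dt.
\]
On any interval $[0,1-\delta]$ the integrands converge uniformly, and $(1-\delta,1)$ is handled by dominated convergence. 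Simultaneously the denominator $\int_0^1|\phi_s|^2 Q_s\,dt$ converges to the positive limit $\int_0^1|\phi_\sigma|^2 Q_0\,dt$. By hypothesis~\eqref{conemin0} the right-hand side above is strictly positive, so Lemma~\ref{conedini} forces $D_-(s) > 0$ for every sufficiently small $s > 0$.

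Finally, I would invoke the following standard principle: if $f:[0,s_1]\to\R$ is continuous and $D_- f(s) > 0$ for every $s\in(0,s_1]$, then $f(s_1) > f(0)$. Indeed, otherwise $f$ would attain its minimum over $[0,s_1]$ at some $s_\ast\in(0,s_1]$, and $D_- f(s_\ast) > 0$ would then force $f(s) < f(s_\ast)$ for $s$ slightly below $s_\ast$, contradicting minimality. Applying this to $f(s)=\lambda_{1,2}(\sigma_s)$, which is continuous by Lemma~\ref{coneevcont}, on a sufficiently small interval $[0,s_1]$ and using $\lambda_{1,2}(\sigma_0)=\lambda_{1,2}(\sigma)$ finishes the proof.

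The main obstacle lies in the passage to the limit near $t=1$. The function $\ddot Q_0$ has a singularity of order $(1-t)^{n-7/2}$, which is integrable against $|\phi_\sigma|^2$ only for sufficiently large $n$; the hypothesis~\eqref{conemin0} already presupposes this integrability, but executing dominated convergence for the family $\{\phi_s\}_{s>0}$ uniformly near $t=1$ requires pointwise bounds on $|\phi_s|$ and $|\phi_s'|$ in terms of powers of $(1-t)$ that are uniform in small $s$. Such bounds should follow from the Euler-Lagrange equation~\eqref{phiseveq}, the normalization $\phi_s'(0)=1$, and the explicit decay rates of $P_s$ and $Q_s$, but they are the technical heart of the argument.
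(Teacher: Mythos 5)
Your high-level plan matches the paper's: prove $D_-(s)>0$ for small $s>0$ using Lemma~\ref{conedini}, pass to the limit as $s\searrow 0$ to relate the bound to the integral in~\eqref{conemin0}, and finish with the standard monotonicity principle for Dini derivatives plus the continuity from Lemma~\ref{coneevcont}. That outer skeleton is correct.

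However, there is a genuine gap exactly where you flag one, and it is not a mere technicality to be deferred: the passage to the limit near $t=1$ cannot be done by dominated convergence without uniform-in-$s$ pointwise bounds on $\phi_s$ and $\phi_s'$ that you do not establish, and the equality
\[
\lim_{s\to 0^+}\frac{1}{s}\int_0^1 \Big(|\phi_s'|^2\dot P_s - \lambda_{1,2}(\sigma_s)|\phi_s|^2\dot Q_s\Big)\,dt = \int_0^1 \Big(|\phi_\sigma'|^2\ddot P_0 - \lambda_{1,2}(\sigma)|\phi_\sigma|^2\ddot Q_0\Big)\,dt
\]
is more than is needed and more than you can justify from the ingredients at hand. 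The paper avoids the whole issue with a sign observation that your proposal misses: a short computation shows that, for $s$ small, the inequality $t^2(1-s^2)\ge 2(1-t)^2$ holds on $[1-\delta,1]$, and this single inequality is equivalent both to $\dot P_s(t)\ge 0$ and to $\dot Q_s(t)\le 0$ there. Hence the integrand $|\phi_s'|^2 s^{-1}\dot P_s - \lambda_{1,2}(\sigma_s)|\phi_s|^2 s^{-1}\dot Q_s$ is nonnegative on $[1-\delta,1]$ for all small $s>0$, and Fatou's lemma (not dominated convergence) gives the one-sided bound
\[
\liminf_{s\searrow 0}\int_0^1 |\phi_s'|^2 s^{-1}\dot P_s - \lambda_{1,2}(\sigma_s)|\phi_s|^2 s^{-1}\dot Q_s\,dt \ge \int_0^1 |\phi_\sigma'|^2\ddot P_0 - \lambda_{1,2}(\sigma)|\phi_\sigma|^2\ddot Q_0\,dt,
\]
with uniform convergence of the integrands on $[0,1-\delta]$ coming from Lemmas~\ref{coneevcont} and~\ref{coneefbound}. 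A $\liminf$ lower bound is all that is needed to conclude $D_-(s)>0$ for small $s$. In short: replace your proposed dominated convergence with Fatou's lemma, justified by the sign of $\dot P_s$ and $\dot Q_s$ near $t=1$, and the gap closes.
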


\begin{proof}
By Lemma \ref{coneevcont}, the function $s \mapsto \lambda_{1,2}(\sigma_s)$ is continuous.
Therefore it suffices to show that the Dini derivative $D_-(s)$ is positive for small positive $s$.
In particular, it suffices to show that
\[
	\liminf_{s \searrow 0} s^{-1} D_-(s) \int_0^1 |\phi_s|^2 Q_s \,dt > 0
\]
By Lemma \ref{conedini}, it suffices to show that
\[
\label{conemin1}
	\liminf_{s \searrow 0} \int_0^1 |\phi_s'|^2 s^{-1} \dot P_s - \lambda_{1,2}(\sigma_s) |\phi_s|^2 s^{-1} \dot Q_s \,dt > 0
\]
Fix a small $\delta>0$.
If $s$ is small and positive, then $\dot P_s \ge 0$ over $[1-\delta,1]$ and $\dot Q_s \le 0$ over $[1-\delta,1]$.
Therefore
\[
\label{conemin2}
\begin{split}
	\liminf_{s \searrow 0} \int_0^1 |\phi_s'|^2 &s^{-1} \dot P_s - \lambda_{1,2}(\sigma_s) |\phi_s|^2 s^{-1} \dot Q_s \,dt \\
		&\ge \int_0^1 | \phi_\sigma'|^2 \ddot P_0(t) - \lambda_{1,2}(\sigma) | \phi_\sigma |^2 \ddot Q_0(t) \,dt
\end{split}
\]
To prove this inequality, use Lemma \ref{coneevcont} and Lemma \ref{coneefbound} to obtain uniform convergence of the integrands over $[0,1-\delta]$ and use Fatou's lemma over $[1-\delta,1]$.
Now \eqref{conemin0} and \eqref{conemin2} imply \eqref{conemin1}, completing the proof.
\end{proof}

Now we verify the condition \eqref{conemin0} for $n \le 5$.

\begin{Lemma}
\label{exp}
If $n \le 5$, then
\[
\label{exp3}
	\int_0^1 | \phi_\sigma'|^2 \ddot P_0(t) - \lambda_{1,2}(\sigma) | \phi_\sigma |^2 \ddot Q_0(t) \,dt > 0
\]
\end{Lemma}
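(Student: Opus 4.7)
The plan is to change variables so that the integral becomes one involving Bessel functions, and then handle $n=2$ and $n \in \{3,4,5\}$ separately.

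Set $\nu := n - \tfrac{3}{2}$ and $j := j_{\nu,1}$, and substitute $u = j\sqrt{1-t}$. Then $\phi_\sigma(t) = j^\nu u^{-\nu} J_\nu(u)$, and using the Bessel identity $\tfrac{d}{du}[u^{-\nu} J_\nu(u)] = -u^{-\nu} J_{\nu+1}(u)$ together with $\tfrac{du}{dt} = -\tfrac{j^2}{2u}$, one obtains $\phi_\sigma'(t) = \tfrac{j^{\nu+2}}{2 u^{\nu+1}} J_{\nu+1}(u)$. Substituting these, together with the explicit forms of $\ddot P_0$, $\ddot Q_0$, and $\lambda_{1,2}(\sigma)=j^2/2$, into the integrand of \eqref{exp3} and simplifying, the $|\phi_\sigma'|^2 \ddot P_0$ piece contributes $\tfrac{j^4 - 2j^2 u^2 - u^4}{2u^3}J_{\nu+1}(u)^2$ and the $-\lambda_{1,2}(\sigma)|\phi_\sigma|^2 \ddot Q_0$ piece contributes the same weight times $J_\nu(u)^2$, giving
\[
	\int_0^1 |\phi_\sigma'|^2 \ddot P_0 - \lambda_{1,2}(\sigma)|\phi_\sigma|^2 \ddot Q_0 \, dt \;=\; \int_0^j \frac{j^4 - 2j^2 u^2 - u^4}{2 u^3} \bigl[ J_\nu(u)^2 + J_{\nu+1}(u)^2 \bigr] du.
\]
The bracket is strictly positive, while the rational weight is positive on $(0, u_*)$ and negative on $(u_*, j)$, where $u_* := j\sqrt{\sqrt{2}-1}$.

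For $n = 2$ (so $\nu = 1/2$), the expansion $J_{1/2}(u)^2 + J_{3/2}(u)^2 = \tfrac{2u}{\pi} + O(u^3)$ near $u=0$ makes the integrand behave like $j^4/(\pi u^2)$, which is not integrable at $0$. Away from $0$ the integrand is bounded, so the positive part of the integrand is non-integrable while the negative part is integrable, forcing the integral to be $+\infty$, and \eqref{exp3} holds trivially in this case.

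For $n \in \{3,4,5\}$ (so $\nu \in \{3/2, 5/2, 7/2\}$) the integral is finite and \eqref{exp3} becomes a genuine quantitative statement. I would exploit that for these half-integer orders, $J_\nu$ and $J_{\nu+1}$ are elementary (finite combinations of $\sin u$, $\cos u$, and negative powers of $u$), and $j = j_{\nu,1}$ satisfies a correspondingly simple transcendental equation. Using the Bessel recurrences $(u^{\nu+1} J_{\nu+1})' = u^{\nu+1} J_\nu$ and $(u^{-\nu} J_\nu)' = -u^{-\nu} J_{\nu+1}$, together with Lommel-type closed forms such as $\int_0^j u J_\nu(u)^2\, du = \tfrac{j^2}{2} J_{\nu+1}(j)^2$ (using $J_\nu(j)=0$), one can integrate by parts repeatedly to reduce the integral to an elementary expression in $j$ and in the boundary values $J_{\nu+k}(j)$, and then verify positivity numerically in each of the three cases.

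The main obstacle is this last step. Because the weight $j^4 - 2j^2 u^2 - u^4$ changes sign on $(0,j)$, positivity of the full integral is a delicate quantitative balance between a positive contribution near $0$ and a negative one near $j$, with no apparent monotonicity or convexity argument handling $n=3,4,5$ uniformly. I expect the honest route is an explicit case-by-case numerical evaluation, made tractable by the elementary nature of $J_\nu$ at half-integer order, but carried out separately for each of the three remaining values of $n$.
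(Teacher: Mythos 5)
Your reduction to the Bessel-form integral
\[
	\int_0^{j} \frac{j^4 - 2j^2 u^2 - u^4}{2 u^3} \Big( J_\nu(u)^2 + J_{\nu+1}(u)^2 \Big) \,du, \qquad \nu = n - \tfrac{3}{2},\ j = j_{\nu,1},
\]
is exactly what the paper obtains (with its $\alpha$ equal to your $\nu$), and your derivation of $\phi_\sigma'$, the change of variables, and the sign-change point $u_* = j\sqrt{\sqrt{2}-1}$ are all correct. Your observation that the integral diverges to $+\infty$ for $n=2$ is also correct: near $u=0$ the dominant piece is $J_\nu^2 \sim C u^{2\nu}$, so the integrand behaves like $u^{2\nu-3}$, which fails to be integrable precisely when $\nu \le 1$, i.e. $n=2$; so \eqref{exp3} is trivially true there. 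The paper does not single this case out, but its argument subsumes it.

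The genuine gap is the cases $n \in \{3,4,5\}$. You explicitly flag this yourself: you propose ``integrate by parts repeatedly,'' ``verify positivity numerically,'' but do not commit to a mechanism or a threshold, and you state the obstacle without resolving it. The paper closes exactly this gap by two concrete moves. First, it splits the weight as $g(u) - 1$ with $g(u) = (j^2 - u^2)^2 / (2u^4)$ strictly decreasing, and introduces, via Lommel's integral, the explicit antiderivative
\[
	f(u) = \frac{u^2}{2}\Big( J_{\alpha+1}(u)^2 - J_{\alpha+2}(u)J_\alpha(u) + J_\alpha(u)^2 - J_{\alpha+1}(u)J_{\alpha-1}(u) \Big)
\]
of $u\big(J_\alpha(u)^2 + J_{\alpha+1}(u)^2\big)$, so that $\int_0^j u(J_\alpha^2 + J_{\alpha+1}^2)\,du = f(j)$ is computed exactly from boundary data, yielding the bound $f(j) < 4$ for $n \le 5$. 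Second, because $g$ is decreasing, for any partition $0 = p_0 < \cdots < p_m = j$ one has the lower Riemann-sum estimate
\[
	\int_0^{j} u \big( J_{\alpha+1}^2 + J_\alpha^2 \big)\, g(u)\, du \;>\; \sum_{i=1}^m \big( f(p_i) - f(p_{i-1}) \big)\, g(p_i),
\]
and a suitable partition pushes the right-hand side above $4$. So the positive and negative contributions are separated by the fixed number $4$, and the comparison is reduced to evaluating the single elementary function $f$ at finitely many points. Your proposal needs some such definite device to turn ``numerical verification'' into a proof; without it, the statement for $n=3,4,5$ is not established.
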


\begin{proof}
Fix $n \ge 2$, and let $\alpha={n-\frac{3}{2}}$.
Using the identity $\frac{\alpha}{x} J_\alpha(x) - J'_\alpha(x)= J_{\alpha+1}(x)$, the derivative $\phi_\sigma'$ can be expressed as
\[
	\phi_\sigma'(t) = \frac{j_{\alpha,1}}{2} (1-t)^{-\frac{\alpha+1}{2}} J_{\alpha+1}(j_{\alpha,1} \sqrt{1-t})
\]
Using Lemma \ref{conesol} and changing variables, we have
\[
\label{exp0}
\begin{split}
	\int_0^1 | \phi_\sigma'|^2 &\ddot P_0(t) - \lambda_{1,2}(\sigma) | \phi_\sigma |^2 \ddot Q_0(t)\,dt \\
	&= \int_0^{j_{\alpha,1}} t \Big( |J_{\alpha+1}(t)|^2 + |J_\alpha(t)|^2 \Big) \bigg(\frac{(j_{\alpha,1}^2-t^2)^2}{2t^4} -1 \bigg) \,dt
\end{split}
\]
This integral can be approximated precisely.
Define a function $f:\R \to \R$ by
\[
	f(t) = \frac{t^2}{2} \Big( J_{\alpha+1}(t)^2-J_{\alpha+2}(t) J_\alpha(t) + J_\alpha(t)^2 - J_{\alpha+1}(t) J_{\alpha-1}(t) \Big)
\]
By Lommel's integral,
\[
	f'(t) = t \Big( |J_{\alpha+1}(t)|^2 + |J_\alpha(t)|^2 \Big)
\]
If $n \le 5$, it follows that
\[
\label{exp1}
	\int_0^{j_{\alpha,1}} t \Big( |J_{\alpha+1}(t)|^2 + |J_\alpha(t)|^2 \Big) \,dt < 4
\]
Define a function $g:(0, \infty) \to \R$ by
\[
	g(t) = \frac{(j_{\alpha,1}^2-t^2)^2}{2t^4}
\]
For any partition $0 = p_0 < p_1 < \ldots < p_m = j_{\alpha,1}$, the monotonicity of $g$ implies
\[
	\int_0^{j_{\alpha,1}} t \Big( |J_{\alpha+1}(t)|^2 + |J_\alpha(t)|^2 \Big) \frac{(j_{\alpha,1}^2-t^2)^2}{2t^4} \,dt > \sum_{i=1}^m \Big( f(p_i)-f(p_{i-1}) \Big) g(p_i)
\]
If $n \le 5$, then choosing a suitable partition shows that
\[
\label{exp2}
	\int_0^{j_{\alpha,1}} t \Big( |J_{\alpha+1}(t)|^2 + |J_\alpha(t)|^2 \Big) \frac{(j_{\alpha,1}^2-t^2)^2}{2t^4} \,dt > 4
\]
Now \eqref{exp0}, \eqref{exp1}, and \eqref{exp2} imply \eqref{exp3}, completing the proof.
\end{proof}

Next we round off a curve $\sigma_s$ with $s>0$ to obtain a curve in $\cC$.

\begin{Lemma}
\label{roundoff}
If $n \le 5$, then there is a curve $\alpha$ in $\cC$ such that $\lambda_{1,2}(\alpha) > \lambda_{1,2}(\sigma)$.
\end{Lemma}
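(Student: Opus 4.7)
The plan is to combine the preceding two lemmas with a rounding-off argument. By Lemma \ref{exp}, for $n \le 5$ the hypothesis \eqref{conemin0} of Lemma \ref{conemin} holds, so Lemma \ref{conemin} furnishes a small $s_0 > 0$ with $\lambda_{1,2}(\sigma_{s_0}) > \lambda_{1,2}(\sigma)$; set $\delta = \lambda_{1,2}(\sigma_{s_0}) - \lambda_{1,2}(\sigma) > 0$. Although $\sigma_{s_0}$ is smooth and simple, the relations $x^2 - y^2 = 2 s_0 t$ and $xy = 1-t$ yield the boundary endpoint $(\sqrt{2 s_0}, 0)$ with tangent proportional to $(\sqrt{s_0/2}, -1/\sqrt{2 s_0})$, which is not vertical; hence $\sigma_{s_0} \notin \cC$. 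The task is to modify $\sigma_{s_0}$ near its endpoint to produce a curve in $\cC$ for which the inequality $\lambda_{1,2}(\alpha) > \lambda_{1,2}(\sigma)$ is preserved.

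For each small $\eta > 0$, I would construct $\alpha_\eta \in \cC$ as follows. Reparametrize $\sigma_{s_0}$ by arc length in $g$, retain it on all but the final arc-length-$\eta$ portion, and splice on a short $C^1$ arc of arc length $O(\eta)$ (for instance a quarter-circle or cubic Bezier curve in $(x,y)$-coordinates) that stays in the interior of $\R^{2n}/G$, matches the tangent of $\sigma_{s_0}$ at the splice, and terminates on the $x$-axis with vertical tangent. After reparametrizing the full curve to constant $g$-speed via Lemma \ref{garc}, $\alpha_\eta$ lies in $\cC$.

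The key claim is $\liminf_{\eta \to 0^+} \lambda_{1,2}(\alpha_\eta) \ge \lambda_{1,2}(\sigma_{s_0})$, from which the lemma follows by choosing $\eta$ small enough that $\lambda_{1,2}(\alpha_\eta) > \lambda_{1,2}(\sigma_{s_0}) - \delta/2 > \lambda_{1,2}(\sigma)$. To prove the claim, let $\phi_\eta$ be the eigenfunction of $\alpha_\eta$ supplied by Lemma \ref{efex}, normalized so $\sup |\phi_\eta| = 1$. Define a test function $\tilde\phi_\eta$ on $\sigma_{s_0}$ by taking $\phi_\eta$ on the unchanged portion and extending by the splice value on the residual short interval. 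Then the numerator of the Rayleigh quotient $R_{\sigma_{s_0}}(\tilde\phi_\eta)$ equals the unchanged-portion numerator for $\alpha_\eta$, hence is at most $\lambda_{1,2}(\alpha_\eta)$ times the full $\alpha_\eta$-denominator; meanwhile, the $\sigma_{s_0}$-denominator differs from the full $\alpha_\eta$-denominator by at most an $O(\eta^n)$ tail contribution, using $\sup |\phi_\eta| = 1$ and $F_\alpha = c_n v^{n-1} = O(\eta^{n-1})$ on the tail. Combining these bounds yields $\lambda_{1,2}(\sigma_{s_0}) \le \lambda_{1,2}(\alpha_\eta)(1 + o(1))$, provided the $L^2(F_{\alpha_\eta})$-mass of $\phi_\eta$ on the unchanged portion is bounded below independently of $\eta$.

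The main obstacle is this uniform lower bound on the bulk mass of $\phi_\eta$. I would obtain it by a Harnack-type argument: the Trudinger Harnack inequality invoked in the proof of Lemma \ref{efex} gives $\phi_\eta > 0$ on $(0,1)$, and since $\sup |\phi_\eta| = 1$ is attained and the coefficients of the Euler-Lagrange equation for $\phi_\eta$ converge uniformly to those for $\phi_{s_0}$ on compact subintervals of $(0,1)$ as $\eta \to 0$, a uniform Harnack constant forces $\phi_\eta$ to exceed a fixed positive constant on a macroscopic subinterval of $[0,1]$ independent of $\eta$. (If the point where the supremum is attained threatened to escape into the tail, a direct ODE analysis near the boundary, parallel to the Bessel asymptotics in Lemma \ref{conesol}, would rule this out.) This yields the required bulk mass bound and completes the proof.
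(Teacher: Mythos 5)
The paper's proof takes a more elementary route that sidesteps the eigenvalue-lower-semicontinuity machinery you invoke. It works in the $(u,v)$-coordinates of \eqref{ucoord}--\eqref{vcoord} and rounds off $\sigma_s$ by modifying only the $u$-component: keep $v\circ\beta = v\circ\sigma_s = 1-t$, and over a short terminal interval replace $u\circ\sigma_s$ by an arc of a small circle in the $(u,v)$-plane, tangent to the segment at the splice point and centered on $\{v=0\}$. Because $F = c_n\, v^{n-1}$ depends only on $v$, this gives $F\circ\beta = F\circ\sigma_s$ identically, so the only change in the Rayleigh quotient \eqref{rgrq} is through $|\beta'|_g$, and one checks $|\beta'|_g \le (1+\epsilon)|\sigma_s'|_g$ pointwise for small $\delta$. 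That yields $\lambda_{1,2}(\beta) \ge (1+\epsilon)^{-2}\lambda_{1,2}(\sigma_s)$ directly from the variational formula, with no limiting argument and no eigenfunction analysis. Since $(u,v)$ is conformal to $(x,y)$ and the circle meets $\{v=0\}$ orthogonally, the reparametrization of Lemma~\ref{garc} lies in $\cC$.

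Your route is genuinely different, but as written it has a gap at the crucial bulk-mass step, and the parenthetical dismissal is the wrong way around. Identifying the eigenfunction with a radial function on a ball $B_n$ centered at $t=1$ (as in the proof of Lemma~\ref{efex}), the maximum of $\phi_\eta$ is attained at, or arbitrarily near, $t=1$ -- that is, \emph{inside} the excised tail. A Harnack inequality on compact subintervals of $(0,1)$ therefore never reaches the point where the normalization $\sup|\phi_\eta|=1$ is realized. To rescue the argument one would need an interior Harnack estimate on $B_n$ centered at the origin, with a constant uniform in $\eta$, to propagate the normalization into the unchanged region; that is plausible but is not established in your sketch, and it is considerably heavier than what the statement requires. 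The $F$-preserving modification in $(u,v)$-coordinates gives a one-line quantitative comparison of Rayleigh quotients and removes the need to track eigenfunctions at all.
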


\begin{proof}
Fix $s>0$ small.
Then $\lambda_{1,2}(\sigma_s) > \lambda_{1,2}(\sigma)$ by Lemma \ref{conemin} and Lemma \ref{exp}.
Let $u_s = u \circ \sigma_s$ and $v_s = v \circ \sigma_s$.
Note that $u_s(1)=s$.
Let $L$ be the line segment in $\R^2$ given by
\[
	L = \Big \{ \Big(u_s(t), v_s(t)\Big) : t \in [0,1] \Big\}
\]
Let $\delta$ be a small constant satisfying $0<\delta<s$.
There is a disc $D$ in $\R^2$ which is centered about $(s - \delta,0)$ such that $L \cap D$ consists of exactly one point.
Fix $t_0$ so that $\sigma(t_0)$ is the point in $L \cap D$.
There is a unique continuous function $u_\delta:[0,1] \to [0, \infty)$ which agrees with $u_s$ over $[0,t_0]$ such that $(u_\delta(t), v_s(t))$ is in the boundary $\del D$ for all $t$ in $[t_0,1]$.
Let $\beta$ be the curve in $\cR_g$ such that $u \circ \beta = u_\delta$ and $v \circ \beta = v_s$.
Note that $F \circ \beta = F \circ \sigma_s$.
Let $\epsilon>0$ be small.
If $\delta$ is small, then for all $t$ in $[0,1]$,
\[
	|\beta'(t)|_g \le (1 + \epsilon) |\sigma_s'(t)|_g
\]
This yields $\lambda_{1,2}(\beta) \ge (1 + \epsilon)^{-2} \lambda_{1,2}(\sigma_s)$.
If $\epsilon$ is small, then $\lambda_{1,2}(\beta) > \lambda_{1,2}(\sigma)$.
Let $\alpha$ be the reparametrization of $\beta$ given by Lemma \ref{garc}.
Then $\alpha$ is in $\cC$ and $\lambda_{1,2}(\alpha) > \lambda_{1,2}(\sigma)$.
\end{proof}

We can now prove Lemma 2.2.

\begin{proof}[Proof of Lemma 2.2]
It suffices to consider the case $x_0=y_0=1$, by a scaling argument.
The case where $n$ is fixed and $p$ is large is established by Lemma \ref{5bigp}.
The case where $p=2$ and $n \le 5$ is established by Lemma \ref{roundoff}.
\end{proof}

\bibliography{bib115}
\bibliographystyle{plain}

\end{document}